\def\l@section{\@tocline{1}{12pt plus2pt}{0pt}{}{\bfseries}}
\def\@tocline#1#2#3#4#5#6#7{\relax
    \ifnum #1>-1
  \ifnum #1>\c@tocdepth 
  \else
    \par \addpenalty\@secpenalty%
    \begingroup \hyphenpenalty\@M
    \@ifempty{#4}{%
      \@tempdima\csname r@tocindent\number#1\endcsname\relax
    }{%
      \@tempdima#4\relax
    }%
    \parindent\z@ \leftskip#3\relax \advance\leftskip\@tempdima\relax
    \rightskip\@pnumwidth plus4em \parfillskip-\@pnumwidth
    #5\leavevmode\hskip-\@tempdima #6\nobreak\relax
    \hfil\hbox to\@pnumwidth{\@tocpagenum{#7}}\par
    \nobreak
    \endgroup
  \fi
\fi}
\newcommand{\subscript}[2]{$#1 _ #2$}
\newcommand{\delete}[1]{}
\newcommand{\E}{\espe}
\newcommand{\R}{\RR}
\newcommand{\norm}[1]{\lVert#1\rVert_2}
\newcommand{\rj}{\ln^2(J)}
\newcommand{\mmax}{m_{\max}}
\newcommand{\tdiff}{(t_2 -t_1)}
\newcommand{\Lloc}{L_{\text{loc}}}
\newcommand{\diff}{\mathrm{d}}
\title[Decompounding with unknown noise]{Decompounding with unknown noise through several independent channels}
\author{Guillaume Garnier}
\date{\today}
\begin{document}

\maketitle

\vspace{-0.5cm}
\begin{center}
    Sorbonne Université, Inria, CNRS, Universit{é} de Paris, Laboratoire Jacques-Louis Lions, 4 place Jussieu, 75005
Paris, France
\end{center}
\begin{abstract} In this article, we consider two different statistical models. First,
    we focus on the estimation of the jump intensity of a compound Poisson process in the presence of unknown noise. This problem combines both the deconvolution problem and the decompounding problem. More specifically, we observe several independent compound Poisson processes but we assume that all these observations are noisy due to measurement noise. We construct an Fourier estimator of the jump density and we study its mean integrated squared error. Then, we propose an adaptive method to correctly select the estimator and we illustrate the efficiency of the method with numerical simulations.\\
    Secondly, we introduce in this article the multiplicative decompounding problem. We study this problem with an estimator based on the Mellin transform. We develop an adaptive procedure to select the optimal cutoff parameter. 
\end{abstract}
\vspace{0.5cm}

\noindent \textbf{Keywords:} adaptive density estimation, Fourier estimator, inverse problem, nonparametric statistical inference, deconvolution, decompounding, multiplicative decompounding, Mellin transform, empirical processes.\\

\noindent \textbf{MSC2020 subject classifications:} 62C12, 62C20, 62G05, 62G07.

\tableofcontents

\section{Introduction}

\subsection{Motivation}
\label{part:introduction}
In the domain of non-parametric statistics, the deconvolution and the decompounding problems are classical and have been widely studied \cite{buchmann2003decompounding, duval2013density, duval2019adaptive, johannes2009deconvolution}.
We give a brief description of the two problems:

\subsubsection*{Deconvolution}
Deconvolution has applications in many fields, such as image processing \cite{kundur1996blind}, microscopy \cite{swedlow2007quantitative}, astronomy \cite{starck2002deconvolution, pantin2017deconvolution},seismology \cite{ulrych1971application, sacchi1997reweighting} and medicine \cite{liu2009deconvolution, ralston2005deconvolution}.

Let $X$ and $\varepsilon$ be two independent random variables with density $f$ and $f_\varepsilon$. The deconvolution problem consists in estimating $f$ from an \iid sample of $Y = X + \varepsilon$, \ie from noisy observations     
    \begin{equation*}
        Y_i = X_i + \varepsilon_i \eqsepv
        i = 1, \ldots, n
        \eqfinv
    \end{equation*}
where $(X_i)_{i = 1}^n$ are \iid random variables with density $f$ and $(\varepsilon_i)_{i = 1}^n$ are, \iid, with density $f_\varepsilon$ and independent of $(X_i)_{i = 1}^n$. \\
This problem have been extensively studied in the literature. The most popular approach estimates $f$ with Fourier estimators \cite{carroll1988optimal, devroye1989consistent,stefanski1990deconvolving, neumann1997effect}. Other approaches have also been developed, for instance by using spline-based methods \cite{averbuch2009spline}, wavelet decomposition \cite{johnstone2004wavelet, starck2003wavelets} and penalization methods \cite{comte2006penalized, comte2007finite}. In most studies, the author assume that the noise density $f_\varepsilon$ is known, however this assumption is not necessary if an additional error sample is available \cite{johannes2009deconvolution, comte2011data}. Many adaptive methods, which only rely on data-driven procedures, have also been developed \cite{griffiths1977adaptive, duval2019adaptive}.
\\

\subsubsection*{Decompounding} In the literature, the term decompounding first appeared in the article of Buchmann and Grübel \cite{buchmann2003decompounding}. 
 Decompounding has many applications in financial mathematics \cite{embrechts2013modelling} and queuing theory \cite{babai2011analysis, gomez2016retrial}. 

Let $(X_k)_{k \in \NN}$ be a family of \iid random variable with density $f$ and $N$ be a homogeneous Poisson process with intensity $\lambda \in (0, \infty)$. Define $Y = (Y_t)_{t \geq 0}$ as the compound Poisson process
    \begin{equation*}
        Y_t
        = \sum^{N_t}_{k = 1}X_k
        \eqsepv
        t \geq 0
        \eqfinp
    \end{equation*}
The decompounding problem consists in estimating $f$ from observations $(Y_{i\Delta}, i=1,\ldots,n)$ of the trajectory of $Y$ over $[0, T]$ at a sampling rate $\Delta > 0$, \ie  at time $(i\Delta,i=1,\ldots,n)$. 
The decompounding problem has been the subject of many articles \cite{bogsted2010decompounding, buchmann2004decompounding, van2004asymptotic}. Most of the time, $f$  is estimated using its characteristic function and Fourier estimators \cite{duval2019adaptive}.  The available data can be observed at high frequencies or at low frequencies \cite{duval2019adaptive, coca2018efficient}. 
The decompounding problem is part of the more general framework of Levy processes that have been widely developed in recent years \cite{reiss2013testing, nickl2012donsker, applebaum2009levy}.
\\

In this article, we study a model that combines both the deconvolution problem and the decompounding problem. In particular, we assume that we observe several compound Poisson processes but that all these observations are noisy due to measurement noise. More precisely, we consider a family of processes
    \begin{equation*}
        Z^j_t = \Bigg( \sum^{N_t^j}_{k = 1}X_k^j \Bigg) + \varepsilon^j_t
        \eqsepv
        t \geq 0
        \eqsepv
        j = 1,\ldots,J
        \eqfinp
    \end{equation*}
We want to construct an estimator of the jump density of the process. Our motivation to study this type of model comes from evolutionary biology.  Many studies aim to estimate the distribution of fitness effects (DFE) of cell. This probability density represents the effect of a new mutation on the fitness of a cell. An accurate determination of the DFE within a population would provide a better understanding of the evolutionary trajectory of the population \cite{eyre2007distribution}.\\

In 2018, Robert et al. \cite{robert2018mutation} developed new experimental methods and a probabilistic model to study the DFE in a population of \textit{Escherichia coli} (\textit{E. coli}). The effect of each mutation on the fitness of a cell is assumed to be drawn according to a random variable $X$ of density $f$ where $f$ denotes the DFE of the cell. 
The authors conclude that the number of mutations in a cell follows a point Poisson process and they estimate the DFE using a method of moment.\\

More specifically, they observe $n = 1476$ cell lines using micro-fluidic methods \cite{wang2010robust}. For each lineage $j = 1, \ldots, n$, it is assume that mutations are deleterious and appear according to a Poisson point process $N^j = (N_t^j)_{t \geq 0}$ with intensity $\lambda > 0$. Each of these mutations modifies the selective value over time $(W_t^j)_{t\geq0}$ of the $j$--lineage. Let us denote $t_i^j$ $(i \in \NN)$ the time of occurrence of the $\{i\} - $th mutation in lineage $j$. The quantity
        \begin{equation*}
             s_i^j = \frac{W_{t_{i-1}^j}^j - W_{t_i^j}^j}{W_{t_{i-1}^j}^j} \;,\; 
            i > 0 \; ,
        \end{equation*}
represents the relative effect of the $\{i\} - $th mutation on the fitness of the individual. \\
Assuming further that these mutations have independent and identically distributed $(s_i)$ effects, then
    \begin{equation*}
    \frac{W_t^j}{W_0^j} = \prod_{i = 1}^{N_t^j}(1 - s_i^j) 
    \eqfinp
    \label{eq:mod_stochastique}
    \end{equation*}

By composing by the logarithm, the evolution of each lineage is controlled by a compound Poisson process, \ie
        \begin{equation*}
            Y_t^j = \ln W_t^j = \sum_{i = 1}^{N_t^j} \ln(1 - s_i^j) 
            \eqfinp
        \end{equation*}

Experimentally, lineages can only be observed at a sampling rate $\Delta > 0$ through discrete observations $(Y_{i\Delta}^j)_{i = 1}^n$. 
Moreover, these observations are noisy due to measurement noise $(\varepsilon^j_{i})_{i = 1}^n$ on the $i$-th observation. In fact, it is only possible to observe the different lineages 
    \begin{equation}\label{eq:main_process}
        Z^j_t = \Bigg( \sum^{N_t^j}_{k = 1}X_k^j \Bigg) + \varepsilon^j_t
        \eqsepv
        t \geq 0
        \eqfinv
    \end{equation}
at sampling times $(i\Delta)_{i = 1,\ldots,n}$, with $X_k^j = \ln(1 - s_k^j)$.  By abuse of notation, we consider that the noise $\varepsilon_t^j$ exists for all $t$ in each lineage $j = 1, \ldots, 1476$. \\

The main goal of this article is to provide a nonparametric estimator that is adapted to estimate the probability density $f$ from the sample of the noisy trajectories and study its asymptotic properties. Such an approach has several advantages. First, it offers a way to study the jump density of a noisy compound Poisson process when we observe several independent processes, which has, to our knowledge, been little studied in the literature. Second this method has the advantage of studying a model very close to the biological situation. \\

However, using a logarithm on the model to transform it into an 'additive' model only gives us information on $\log X$. While it is true that the two models are theoretically equivalent, the additive form forces us to make assumptions about the logarithm of the density we want to reconstruct, what is not easy to justify from modelling perspective \\
To overcome this, we use in \cref{S:multipDecompounding} a method similar to the previous one to study the multiplicative decompounding problem. 

In this setting, we discretely observe one trajectory of a multiplicative compound Poisson process 
    \begin{equation*}
    W_t =\prod_{i = 1}^{N_t}(1 - s_i) 
    \eqfinv
    \end{equation*}
where $(N_t)_{t \geq 0}$ is a Poisson process with intensity $\lambda$ independent of the $\iid$ random variables $(s_i)_i$ with density $f$. \\

The idea of studying 'multiplicative' models was developed in \cite{vardi1989multiplicative} and \cite{vardi1992large} where the authors studied a multiplicative deconvolution problem where the error $U$ is uniformly distributed on $[0,1]$. Such models can be studied using the Mellin transform, which can be seen as a multiplicative version of the Fourier transform. These estimators have been successfully used in  \cite{miguel2021spectral,miguel2023multiplicative} to study the multiplicative deconvolution problem with unknown noise.

\subsection{Main results and organisation of the article}

In \cref{S:decompounding}, we discretely observe $J$ trajectory of the process
 $Z = (Z_t)_{t \geq 0}$ defined in \cref{eq:main_process}, \ie
    \begin{equation*}
        Z_t^j
        = \sum^{N_t^j}_{k = 1}X_k^j + \varepsilon_t^j
        \eqsepv
        t \geq 0
        \eqsepv 
        j = 1,\ldots,J
        \eqfinv
    \end{equation*}
where $(X_k^j)_{k \in \NN}$, $j = 1, \ldots, J$ is a family of \iid random variable with density $f$, $(N^j_t)_{t\geq0}$, $j = 1, \ldots, J$ are homogeneous Poisson processes with intensity $\lambda \in (0, \infty)$ and $(\varepsilon^j_t)_{t \geq 0, }$ are \iid centered random variable such that for every $t,s \geq 0$ and for every $i,j = 1, \ldots, J$ the random variables $\varepsilon^j_t$ and $\varepsilon^i_s$ are independents. Let $\Delta > 0$. We suppose that we observe $$(Z_{i\Delta}^j, i = 1, \ldots n, j = 1, \ldots J).$$ We aim at estimating $f$ from these observations. In \cref{s:setting}, we define an estimator $\hat f_{m,J}$, based on Fourier methods, that rely on two different times $t_1, \, t_2 \in \{i\Delta, i = 1, \ldots, n\}$. We establish in \cref{thm:analysis_error} an upper bound for its $L^2$-risk
\begin{equation*}
    \begin{split}
    \espe \Big(\norm{\hat f_{m, J} - f }^2 \Big)
    \leq  \norm{f_m - f}^2 
		+ \sum_{i = 1}^2 \frac{4 e^{4t_i}}{J (t_2 - t_1)^2} \int_{-m}^{m} \frac{\du}{|\varphi_{\varepsilon}(u)|^2}
		\\     
		+\frac{4 K_{J, t_1, t_2}}{(t_2 - t_1)^2} 
	    \cdot \bigg( \frac{\E[X_i^2]}{J t_i} 
		+ \frac{\E[\noise^2]}{J t_i^2} + 4 \frac{m}{(J t_i)^2} \bigg)
  \eqfinv
    \end{split}
	\end{equation*}
 where $\norm{f} = \Big(\int_{\RR} |f(x)|^2 \diff x \Big)^{1/2}$, $\varphi_\varepsilon$ denotes the characteristic function of $\varepsilon$ and $K_{J, t_1, t_2}$ is a constant that only depends on $J,t_1$ and $t_2$.  We discuss the rates of convergence in \cref{s:speed_cv_one}  depending on the regularity of $f$ and $f_\varepsilon$. In \cref{S:adaptive}, we establish an adaptive procedure to automatically select the value of the threshold $m$ according to the data. \\

In \cref{S:multipDecompounding}, we discretely observe one trajectory of the multiplicative compound process
\begin{equation}
    Y_t = \prod_{i = 1}^{N_t}X_i
    \eqfinv
    \label{multCompound_intro}
\end{equation}
where $(N_t)_{t \geq 0}$ is a Poisson process with constant intensity 
$\lambda > 0$, independent of the $\iid$ random variables $(X_j)_{j \in \NN}$ with common density $f \in L^1([0, \infty)) \cap L^2([0, \infty))$. We suppose that we observe $(Y_{i\Delta}, i = 1, \ldots n)$. We aim at estimating $f$ from these observations. In \cref{sec:mellinDEF}, we recall some properties of the Mellin transform. In \cref{sec:mellin_estimator}, we define the empirical Mellin estimator 
\begin{equation*}
    \hat{\Mcal_c}(t) = \frac{1}{n} \sum_{k = 1}^{N} X_{k}^{c - 1 + it}
    \eqsepv 
    t \in \RR
    \eqfinp
\end{equation*}
and we give some of this properties when it is well defined. In \cref{s:model}, we define an estimator $\hat f_{m, \Delta}$ of $f$, based on the Mellin transform. In \cref{sec:risk_mellin}, we establish in \cref{T:risk} an upper bound for its $L^2$-risk
\begin{equation*}
\resizebox{.95\hsize}{!}{$
    \EE \big[ \|\hat{f_{m, \Delta}}  - f \|_{\omega_1}^2\big] \leq \|f_m  - f \|_{\omega_1}^2 
    +
    \frac{1}{2\pi n \Delta^2} \int_{-m}^{m}\frac{1}{|\Mcal_1[\Delta](s)|^2}\diff s 
    + \frac{25}{\pi} \Big(\frac{\EE[\ln(X_1)^2]}{\Delta n} + 4 \frac{m}{(n \Delta)^{2}} \Big)
    \eqfinp
$}
\end{equation*}
where $
    \| f \|_{\omega_1} = \Big( \int_{0}^\infty x |f(x)|^2 \diff x \Big)^{1/2}
    \eqfinp$
In \cref{sec:mellin_adaptive}, we introduce an adaptive procedure $m$ to select the optimal cutoff parameter for the Mellin estimator. \\

We numerically illustrate our two methods on several examples in \cref{S:simu}.
The proofs are postponed to the \cref{sec:proof}. \\

We rigorously define the distinguished logarithm which is a key element of our estimators in \cref{S:DistingLog} and we describe some of its classical properties. We present some useful lemmas in \cref{s:alem}.\\

All of our computer code is available and documented on GitHub
 \begin{center}
\url{https://github.com/guimgarnier/decompounding-with-noise}
\end{center}
with several examples, making it easy to use on biological experimental data.\\

\section{Decompounding with unknown noise}
\label{S:decompounding}

\subsection{Notation}
\label{part:notation}

\subsubsection*{The empirical characteristic function}

The characteristic function of a real random variable $X$ is
	\begin{equation*}
	\varphi_X(u) = \int_\RR e^{i u x}\PP_X(dx) =\EE[e^{iuX}]
    \eqsepv u \in \RR
	\eqfinp
	\end{equation*}	
If $X$ has a density $f \in L^1(\RR) \cap L^2(\RR)$, then we recall the inversion formula
\begin{equation*}
    f(x) = \frac{1}{2\pi}\int_\RR \varphi_X(u) e^{-iux} \diff u
    \eqsepv
    x \in \RR
    \eqfinv
\end{equation*}
and Parseval's identity 
\begin{equation*}
    \int_\RR |f(x)|^2 \diff x = \frac{1}{2\pi}\int_\RR |\varphi_X(u)|^2 \diff u
    \eqfinp
\end{equation*}

Let $(X_1, \ldots, X_N)$ be \iid real-valued random variables with common characteristic function $\varphi_X(t)$. The function
    \begin{equation*}
        \hat \varphi_X^N(u) = \frac{1}{N} \sum_{k=1}^N e^{iuX_k}
        \eqsepv
        u \in \RR 
        \eqfinv
    \end{equation*}
is called the empirical characteristic function associated with the sample $(X_1, \ldots, X_N)$. \\

The empirical characteristic function is an unbiased estimator of $\varphi_X$ \ie $$\EE [\hat \varphi_X^N(u)] = \varphi_X(u)$$ and
\begin{equation}\label{varECF}
    \EE [|\hat \varphi_X^N(u) - \varphi_X(u)|^2] = \frac{1}{N}(1 - |\varphi_X(u)|^2)
    \eqfinp
\end{equation}
For more detailed information, we refer to Ushakov's book \cite{ushakov2011selected}.

\subsubsection*{The distinguished logarithm} In the following, the notation $\log$ denotes the distinguished logarithm that we rigorously define in Appendix \ref{S:DistingLog}.

\subsection{Statistical setting}
\label{s:setting}
Let $f \in L^1(\RR) \cap L^2(\RR)$. Let $(X_i^j)_{i, j \geq 0}$ be \iid real-valued random variables with density $f$ and for some $J \in \NN$, let $(N_t^j)_{t\geq0}, j = 1,\ldots,J $ be a family of \iid Poisson point processes with intensity $\lambda \in (0, \infty)$, independent of $(X_i^j)_{i, j \geq 0}$. \\

We consider a family $(Y_t^j)_{t\geq0}$, $j = 1\ldots,J$  of compound Poisson processes with intensity $\lambda$ and jump size density $f$,
	\begin{equation*}
		Y_t^j = \sum_{k = 1}^{N_t^j} X_k^j
		\eqsepv
		t \geq 0
		\eqsepv
		j = 1,\cdots,J
		\eqfinp
	\end{equation*}

It is assumed that the observations are disturbed by a random noise independent of the observed process, \ie we observe a process of the form
    \begin{equation*}
        Z^j_t = Y^j_t + \varepsilon^j_t
        = \BBp{\sum^{N_t^j}_{k = 1}X_k^j} + \varepsilon^j_t
        \eqsepv
        t \geq 0
        \eqfinv
    \end{equation*}
for every $i,j = 1, \ldots, J $. For every reals $t,s \geq 0$, the random variables $\varepsilon^j_t$ and $\varepsilon^i_s$ are independents. \\

Also, we set the following assumptions:

\begin{hypothesis*}\text{}
    \begin{enumerate}[label=(\subscript{H}{{\arabic*}})]
        \item $\fctcar{X_1} \in L^1(\RR)$.
        \label{hyp:target_fctcar_is_invertible}
        \item $\espe(X_1^2) < \infty$.  \label{hyp:target_is_square_integrable}
        \item $\forall t \in [0, \infty) \eqsepv \espe[\noise_t] = 0$ 
        \label{hyp:noise_is_centered_and_square_integrable}
        and $\espe[\noise_t^2] <\infty$.
        \item  $\forall m \in [0, \infty) \eqsepv \exists c_m \in (0, \infty) : \forall u \in [0, m]
        \eqsepv |\fctcar{\varepsilon}(u)| \geq c_m$. \label{hyp_noise_is_bounded}
        \item  $\forall u \in \RR
        \eqsepv |\fctcar{\varepsilon}(u)| > 0$. \label{hyp_noise_no_vanish}
    \end{enumerate}
\end{hypothesis*}

For the sake of simplicity, we write $\varepsilon$ instead of $\varepsilon_t^j$ in the rest of this article.

\subsubsection*{Construction of the estimator.}
We estimate the characteristic function $\varphi_X$ of $X$. The main idea is to claim that if we have a good reconstruction of $\varphi_X$, by applying the inverse Fourier transform, we should have a good reconstruction of $f$. 

For all $t \in [0, \infty)$,  the characteristic function of the process on a single channel $Z_t^j$ is given by
    \begin{equation}
         \varphi_{Z_t}(u) = e^{- \lambda t + \lambda t \varphi_X(u)} 
        \cdot \varphi_\noise(u)
        \eqsepv
        \forall u \in \RR
        \eqfinp
        \label{eq:fctcar_un_canal_bruit}
    \end{equation}

    To simplify notation, we consider in the rest of the article the particular case where $\lambda = 1$. \\

Consider two different times $0 < t_1 < t_2$, then
    \begin{equation*}
        \frac{\varphi_{Z_{t_2}}}{\varphi_{Z_{t_1}}}
        = 
        e^{
            - \np{t_2 - t_1} 
            + \np{t_2 - t_1} \varphi_X(u)}
    \eqsepv
    \forall u \in \RR
    \eqfinp
    \end{equation*}

Applying the distinguished logarithm, we obtain the explicit formula
    \begin{equation*}
        \varphi_X (u)
        = 1 + \frac{1}{t_2 - t_1} \Bc{ \log \varphi_{Z_{t_2}} (u) - \log \varphi_{Z_{t_1}} 		        (u)}
    \eqsepv
    \forall u \in \RR
    \eqfinp
    \label{reconstruction_exacte_fctcar}
    \end{equation*}

This leads us to consider the estimator
    \begin{equation*}
        \forall u \in \RR
        \eqsepv
        \hat \varphi_X^{\, J} (u)
        = 1 + \frac{1}{t_2 - t_1} \Bc{ \log \hat \varphi_{Z_{t_2}}^{\, J} (u) - \log 		   
        \hat \varphi_{Z_{t_1}}^{\, J} (u)}
        \eqfinv
    \end{equation*}
    with for all $u \in \RR$, $\tau \in \{t_1, t_2\}$
    \begin{equation*}
    	\hat \varphi_{Z_{\tau}}^{\; ' J} (u) = \frac{1}{J} \sum_{j = 1}^J i Z_{\tau}^j e^{i u Z_{\tau}^j}
    \eqsepv
    \hat \varphi_{Z_{\tau}}^{\, J} (u) = \frac{1}{J} \sum_{j = 1}^J e^{i u Z_{\tau}^j}
    \eqsepv
    \log \hat \varphi_{Z_{\tau}}^{\, J} (u) = \int_0^u \frac{\hat \varphi_{Z_{\tau}}^{\; ' J} \np{z}}{\hat \varphi_{Z_{\tau}}^{\, J} \np{z}} \dz
    \eqfinp
    \label{def:estimateur_fctcar}
    \end{equation*}

Since $\varphi_X$ is a characteristic function, its modulus is bounded by 1. Nevertheless, this is not necessarily the case for $\hat \varphi_X^{\, J}$. We avoid this explosion problem taking
    \begin{equation*}
        \tilde \varphi_X^{\, J} (u) 
        = 1 + \frac{1}{t_2 - t_1} \Bc{ \log \hat \varphi_{Z_{t_2}}^{\, J} (u) \cdot \1{|\log 		   
        \hat \varphi_{Z_{t_2}}^{\, J} (u)| \leq \ln(J)} - \log 		   
        \hat \varphi_{Z_{t_1}}^{\, J} (u) \cdot \1{|\log 		   
        \hat \varphi_{Z_{t_1}}^{\, J} (u)| \leq \ln(J)}}
        \eqsepv
        u \in \RR
        \eqfinp
    \end{equation*}
To simplify notation, for any positive real $\tau > 0$ and for all $u \in \RR$ we set
    \begin{equation*}
    \log \tilde \varphi_{Z_{\tau}}^{\, J}  (u) 
    =
    \log \hat \varphi_{Z_{\tau}}^{\, J} (u) \cdot \1{|\log 		   
        \hat \varphi_{Z_{\tau}}^{\, J} (u)| \leq \ln(J)}
    \eqfinp
    \end{equation*}

In particular, with this notation
    \begin{equation}
    \label{def:estimateur_fctcar_tilde}
        \tilde \varphi_X^{\, J} (u) 
        = 1 + \frac{1}{t_2 - t_1} \Big[ \log \tilde \varphi_{Z_{t_2}}^{\, J} (u) - \log 		   
        \tilde \varphi_{Z_{t_1}}^{\, J} (u) \Big]
        \eqsepv
        u \in \RR
        \eqfinp
    \end{equation}

Now, we find an estimator of $f$ by performing an inverse Fourier transformation. However, $\tilde \varphi_X^{\, J}$ is not necessarly integrable. Hence, we eliminate frequencies above a threshold $m$ before applying an inverse Fourier transform, we obtain
	\begin{equation}
        \hat f_{m, J}(x) = \frac{1}{2\pi} \int_{-m}^m e^{-iux} \tilde \varphi_X^{\, J} (u) \du
        \eqsepv
        x \in \RR
        \eqsepv
        m \in (0, \infty)
        \eqfinp
    \label{def:estimateur}
    \end{equation}

In the same way, we perform an truncated inverse Fourier transformation from the true characteristic function of $X$ and we define
	\begin{equation*}
        f_{m}(x) = \frac{1}{2\pi} \int_{-m}^m e^{-iux} \varphi_X (u) \du
        \eqsepv
        x \in \RR
        \eqsepv m \in (0, \infty)
        \eqfinp
    \label{def:true_estimateur_truncated}
    \end{equation*}

\subsection{Risk bounds}
\label{sec:risk_decompound}
In this section, we study the mean integrated squared error (MISE) of the estimator $\hat f_{m, J}$. \\

For any $0 < t_1 < t_2$, we define
	\begin{equation}\label{def_Cj}
	C^J_{t_1, t_2} = \min \Big\{ m \geq 0 \, \Big| \, 3t_2 - t_1 + \sup_{[-m, m]}| \log \varphi_{\varepsilon}(\cdot)| > \ln(J) \Big\} 
	\eqfinp
	\end{equation}	

\begin{theorem}\label{thm:analysis_error}
Suppose that the assumptions \ref{hyp:target_fctcar_is_invertible} -- \ref{hyp_noise_no_vanish} holds.
Let $0 < t_1 < t_2 $. We suppose that $J$ is enough large such that for all $i = 1,2$:
\begin{enumerate}
    \item $t_i < \frac{1}{4} \log(J t_i)$,
    \item $\sqrt{\log(J t_i)} (J t_i)^{2\delta_i-1/2} < 1$ with $\delta_i = t_i / \log(J t_i)$.
\end{enumerate}
Then for any $m < C^J_{t_1, t_2}$ with $ C^J_{t_1, t_2}$ defined by \eqref{def_Cj},we have
	\begin{equation*}
    \begin{split}
    \espe \Big(\norm{\hat f_{m, J} - f }^2 \Big)
    \leq  \norm{f_m - f}^2 
		+ \sum_{i = 1}^2 \frac{4 e^{4t_i}}{J (t_2 - t_1)^2} \int_{-m}^{m} \frac{\du}{|\varphi_{\varepsilon}(u)|^2}
		\\     
		+\frac{4 K_{J, t_1, t_2}}{(t_2 - t_1)^2} 
	    \cdot \bigg( \frac{\E[X_i^2]}{J t_i} 
		+ \frac{\E[\noise^2]}{J t_i^2} + 4 \frac{m}{(J t_i)^2} \bigg)
	\eqfinp	
    \end{split}
	\end{equation*}
	where $K_{J, t_1, t_2} = m \rj
	 + 
	 8 m t_2^2
	 +
	 \int\limits_{-m}^m | \log\varphi_\varepsilon (u)|^2 \du $.
\end{theorem}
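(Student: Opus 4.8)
The plan is a bias--variance analysis in the Fourier domain, the extra work being concentrated in the variance term, which involves the distinguished logarithm of empirical characteristic functions.

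\textbf{Step 1 (bias--variance split).} Both $\hat f_{m,J}$ and $f_m$ are inverse Fourier transforms of functions supported in $[-m,m]$, while $f-f_m$ has Fourier transform $\varphi_X\,\mathds{1}_{\{|u|>m\}}$; Plancherel's identity therefore gives the exact orthogonal decomposition $\norm{\hat f_{m,J}-f}^2=\norm{f_m-f}^2+\norm{\hat f_{m,J}-f_m}^2$ and, after taking expectations,
\[
\espe\,\norm{\hat f_{m,J}-f_m}^2=\frac{1}{2\pi}\int_{-m}^m\espe\bigl|\tilde\varphi_X^{\,J}(u)-\varphi_X(u)\bigr|^2\,\diff u .
\]
Subtracting \eqref{def:estimateur_fctcar_tilde} from the exact reconstruction formula for $\varphi_X$ and applying $(a+b)^2\le 2a^2+2b^2$ reduces the pointwise risk to
\[
\bigl|\tilde\varphi_X^{\,J}(u)-\varphi_X(u)\bigr|^2\le\frac{2}{(t_2-t_1)^2}\sum_{i=1}^2\bigl|\log\tilde\varphi_{Z_{t_i}}^{\,J}(u)-\log\varphi_{Z_{t_i}}(u)\bigr|^2 ,
\]
so it suffices to bound, for each $\tau=t_i$ and $u\in[-m,m]$, the quantity $\espe|\log\tilde\varphi_{Z_\tau}^{\,J}(u)-\log\varphi_{Z_\tau}(u)|^2$.

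\textbf{Step 2 (the good event).} Fix $u\in[-m,m]$ with $m<C^J_{t_1,t_2}$ and let $\mathcal G_\tau(u)$ be the event on which $\sup_{z\in[0,u]}|\hat\varphi_{Z_\tau}^{\,J}(z)/\varphi_{Z_\tau}(z)-1|\le\tfrac12$. On $\mathcal G_\tau(u)$ the ratio $\hat\varphi_{Z_\tau}^{\,J}/\varphi_{Z_\tau}$ stays inside $B(1,\tfrac12)\subset\mathbb{C}\setminus\{0\}$, so by additivity of the distinguished logarithm (\cref{S:DistingLog}) and $\varphi_{Z_\tau}\neq 0$ (\ref{hyp_noise_no_vanish}) one has $\log\hat\varphi_{Z_\tau}^{\,J}(u)-\log\varphi_{Z_\tau}(u)=\mathrm{Log}\bigl(\hat\varphi_{Z_\tau}^{\,J}(u)/\varphi_{Z_\tau}(u)\bigr)$ with the principal branch, whence $|\log\hat\varphi_{Z_\tau}^{\,J}(u)-\log\varphi_{Z_\tau}(u)|\le 2|\hat\varphi_{Z_\tau}^{\,J}(u)-\varphi_{Z_\tau}(u)|/|\varphi_{Z_\tau}(u)|$; moreover $|\log\hat\varphi_{Z_\tau}^{\,J}(u)|\le 1+|\log\varphi_{Z_\tau}(u)|\le\ln(J)$, using $\log\varphi_{Z_\tau}(u)=-\tau+\tau\varphi_X(u)+\log\varphi_\varepsilon(u)$ (\eqref{eq:fctcar_un_canal_bruit}) and $|\log\varphi_{Z_\tau}(u)|\le 2t_2+\sup_{[-m,m]}|\log\varphi_\varepsilon|<\ln(J)$ from \eqref{def_Cj}, so that $\log\tilde\varphi_{Z_\tau}^{\,J}(u)=\log\hat\varphi_{Z_\tau}^{\,J}(u)$ there. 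Combining this with the variance identity \eqref{varECF}, $\espe|\hat\varphi_{Z_\tau}^{\,J}(u)-\varphi_{Z_\tau}(u)|^2\le\tfrac1J$, and the lower bound $|\varphi_{Z_\tau}(u)|^2\ge e^{-4\tau}|\varphi_\varepsilon(u)|^2$ (from $|e^{-\tau+\tau\varphi_X(u)}|\ge e^{-2\tau}$ and \ref{hyp_noise_is_bounded}) gives
\[
\espe\Bigl[\bigl|\log\tilde\varphi_{Z_\tau}^{\,J}(u)-\log\varphi_{Z_\tau}(u)\bigr|^2\mathds{1}_{\mathcal G_\tau(u)}\Bigr]\le\frac{4\,e^{4\tau}}{J\,|\varphi_\varepsilon(u)|^2},
\]
which, fed back into Step~1, produces the term $\sum_{i=1}^2\tfrac{4e^{4t_i}}{J(t_2-t_1)^2}\int_{-m}^m|\varphi_\varepsilon(u)|^{-2}\diff u$.

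\textbf{Step 3 (the bad event).} On $\mathcal G_\tau(u)^c$ the truncation caps $|\log\tilde\varphi_{Z_\tau}^{\,J}(u)|$ by $\ln(J)$ (it equals $0$ or $\log\hat\varphi_{Z_\tau}^{\,J}(u)$ with modulus $\le\ln(J)$), so $|\log\tilde\varphi_{Z_\tau}^{\,J}(u)-\log\varphi_{Z_\tau}(u)|^2\le 4\ln^2(J)$ and only $\PP(\mathcal G_\tau(u)^c)$ remains to be estimated. Since $\mathcal G_\tau(u)^c\subseteq\{\sup_{z\in[-m,m]}|\hat\varphi_{Z_\tau}^{\,J}(z)-\varphi_{Z_\tau}(z)|>\tfrac12 e^{-2t_2}c_m\}$, this is handled by a uniform-in-frequency deviation bound for the empirical characteristic function on $[-m,m]$ (\cref{s:alem}): writing $g=\hat\varphi_{Z_\tau}^{\,J}-\varphi_{Z_\tau}$, the Sobolev-type inequality $\sup_{[-m,m]}|g|^2\le m\int_{-m}^m|g'(v)|^2\diff v$ (as $g(0)=0$), together with $\espe|g'(v)|^2\le\tfrac1J\espe[Z_\tau^2]$ and $\espe[Z_\tau^2]=\tau\espe[X_1^2]+\tau^2(\espe X_1)^2+\espe[\varepsilon^2]$, is what brings in the moments $\espe[X_1^2]$, $\espe[\varepsilon^2]$ and the factor $t_2^2$, while the growth assumptions (1)--(2) on $J$ — note $(Jt_i)^{2\delta_i}=e^{2t_i}$, so condition~(2) reads $e^{4t_i}\log(Jt_i)\le Jt_i$ — keep the resulting bad-event contribution at the order captured by $\tfrac{4K_{J,t_1,t_2}}{(t_2-t_1)^2}\bigl(\tfrac{\espe[X_1^2]}{Jt_i}+\tfrac{\espe[\varepsilon^2]}{Jt_i^2}+4\tfrac{m}{(Jt_i)^2}\bigr)$, the three pieces $m\ln^2(J)$, $8mt_2^2$ and $\int_{-m}^m|\log\varphi_\varepsilon|^2$ of $K_{J,t_1,t_2}$ absorbing, respectively, the $\ln^2(J)$ ceiling, the $\tau^2(\espe X_1)^2$ part of $\espe[Z_\tau^2]$, and the $u$-dependence of $\log\varphi_{Z_\tau}$. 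Summing Steps~2 and 3 over $i=1,2$ and integrating in $u$ yields the stated inequality.

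\textbf{Main obstacle.} The crux is the passage through the distinguished logarithm: one must guarantee that $\hat\varphi_{Z_\tau}^{\,J}$ stays away from $0$ \emph{simultaneously} along the entire segment $[0,u]$, so that the distinguished logarithm reduces to the principal logarithm and its logarithmic derivative does not explode. This forces a uniform — not merely pointwise — concentration inequality for the empirical characteristic function over $[-m,m]$, and the whole argument hinges on calibrating the truncation level $\ln(J)$ against $C^J_{t_1,t_2}$ and against the growth conditions (1)--(2), which are exactly what makes the worst-case amplification $e^{4t_i}$ negligible compared with $Jt_i$ and thus keeps the exceptional term at the order $(Jt_i)^{-2}$.
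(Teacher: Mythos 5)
Your Steps 1--2 follow the paper's own route (bias--variance split via Parseval, then a good/bad-event analysis of the distinguished logarithm, your relative-deviation event playing the role of Lemma \ref{lm:main2}), but Step 3 contains the real gap. In the paper, the third term of the bound is the crude cap on the integrand (which yields $4K_{J,t_1,t_2}$, the piece $32m\tau^2$ coming from $|\log\varphi_{Y_\tau}|\le 2\tau$ and the piece $\int_{-m}^m|\log\varphi_\varepsilon|^2$ from the noise part of $\log\varphi_{Z_\tau}$) multiplied by $\PP(\Omega_{\zeta,\tau}(m)^c)$, and that probability is bounded as in Lemma \ref{lm:main1} by $\frac{\espe[X_1^2]}{J\tau}+\frac{\espe[\varepsilon^2]}{J\tau^2}+4\frac{m}{(J\tau)^2}$, obtained by chaining: Chebyshev for the empirical mean of $|Z_\tau^j|$ (this is where the moments enter) plus Hoeffding's inequality on a grid of mesh about $(J\tau)^{-1/2}$, which produces the term $4m(J\tau)^{-\eta}$ with $\eta=2$. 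Your substitute --- Markov applied to $\sup_{[-m,m]}|g|^2\le m\int_{-m}^m|g'|^2$ with $\espe|g'(v)|^2\le\espe[Z_\tau^2]/J$, the bad event being exceedance of the constant threshold $\tfrac12 e^{-2t_2}c_m$ --- only gives $\PP(\mathcal G_\tau^c)\lesssim m^2e^{4t_2}\espe[Z_\tau^2]/(J\,c_m^2)$. That bound carries the factor $c_m^{-2}$, which is absent from the theorem and can be enormous (Gaussian noise has $c_m=e^{-m^2/2}$), is quadratic rather than linear in $m$, and cannot reproduce the $\espe[\varepsilon^2]/(Jt_i^2)$ and $4m/(Jt_i)^2$ structure at all. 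The appeal to conditions (1)--(2) (your reformulation $e^{4t_i}\log(Jt_i)\le Jt_i$ is correct) cannot repair this, since those conditions involve neither $c_m$ nor $m$; in the paper they are used elsewhere, namely to show $M_{J,\tau}^{(\gamma c_m)}\ge m$ so that the intermediate regime $I_{\tau,J,m}^{(2)}$ vanishes, and to ensure $\gamma c_m\sqrt{\log(J\tau)/(J\tau)}<1$. Likewise your attribution of the pieces of $K_{J,t_1,t_2}$ (e.g.\ $8mt_2^2$ ``absorbing'' the $\tau^2(\espe X_1)^2$ part of $\espe[Z_\tau^2]$) does not correspond to any computation that actually produces the stated right-hand side.

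There is also a calibration problem in Step 2: with the fixed relative threshold $1/2$ your slack is $|\log\hat\varphi_{Z_\tau}^{\,J}(u)-\log\varphi_{Z_\tau}(u)|\le 1$, hence $|\log\hat\varphi_{Z_\tau}^{\,J}(u)|\le 1+2\tau+\sup_{[-m,m]}|\log\varphi_\varepsilon|$, and for this to stay below $\ln(J)$ for every $m<C^J_{t_1,t_2}$ as defined in \eqref{def_Cj} you need $1\le t_2-t_1$, which is not assumed; the paper avoids this by taking $\gamma c_m=2\zeta/(1\wedge(t_2-t_1))$, which makes the slack at most $t_2-t_1$ and matches the constant $3t_2-t_1$ in \eqref{def_Cj}. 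Finally, with your constant $2$ in $|\mathrm{Log}(1+w)|\le 2|w|$ the first variance term comes out as $8e^{4t_i}/(J(t_2-t_1)^2)$ rather than the stated $4e^{4t_i}/(J(t_2-t_1)^2)$. The last two points are fixable by recalibrating the threshold; the missing Lemma-\ref{lm:main1}-type uniform concentration bound is the essential gap.
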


\begin{remark} Theorem  \ref{thm:analysis_error} is asymptotic and ensures that the variance term vanishes when $J \to \infty$. The upper bound obtained is the sum of a bias term and of a variance term. \\
In the variance, there is a term $V \sim \frac{4 e^{4t_i}}{J (t_2 - t_1)^2} $. On one side, the presence of $e^{4t_i}$ means that the estimator is more and more imprecise as we look the sample at a very large time $t_2$. On the other side, the presence of $(t_2 - t_1)$ at the denominator means that we cannot take $t_1$ and $t_2$ too close to each other. We illustrate this trade-off in Section \ref{S:simu} with numerical simulations.
\end{remark}

\idea{\begin{remark} For any positive real $\tau > 0$, we have $\varphi_{Z_{\tau}} = \varphi_{Y_{\tau}} \cdot \varphi_\varepsilon$ from Proposition \ref{prop:fctcar_compound_poisson_without_noise} and Equation \eqref{eq:fctcar_un_canal_bruit}. By using the classical inequality $2ab \leq {a^2 + b^2}$ for any reals $(a,b) \in \RR^2$, we have that
	\begin{equation*}
	\forall \tau \in (0, \infty)
	\eqsepv
	\frac{4}{J \tau}  \int_{-m}^{m} \frac{\du}{|\varphi_{Z_{\tau}}(u)|^2}
	\leq 
	\frac{2}{J \tau}  \int_{-m}^{m} \frac{\du}{|\varphi_{Y_{\tau}}(u)|^4} 
	+ \frac{2}{J \tau}  \int_{-m}^{m} \frac{\du}{|\varphi_\varepsilon(u)|^4}  
	\eqfinp
	\end{equation*}
\end{remark}}

\subsection{Speed of convergence}
\label{s:speed_cv_one}

In this section, we study the optimal choice of $m$ based on the regularity of the jump density and regularity of the noise density, \ie that when $J \to \infty$, we look at the asymptotic behaviour of $m$ when it minimizes the upper term in Theorem in \ref{thm:analysis_error}. We organize the discussion according to different regularities of $f$ and $f_\varepsilon$. More specifically, we consider:
	\begin{itemize}
		\item ordinary smooth densities: The characteristic function decays as $|u|^{-2a}$ (example: Gamma distribution)
		\item super smooth densities: The characteristic function decays as $e^{-|u|^s}$ (example: Cauchy distribution)
	\end{itemize}

\subsubsection*{When $f$ is ordinary smooth and $f_\varepsilon$ is ordinary smooth}\label{sec:smoothsmooth}

In this case, we assume that $f \in \Scal(\beta, L)$ for some $\beta > 0$, $L > 0$, with
	\begin{equation*}
	\Scal(\beta, L) 
	= 
	\Big\{ f \in L^2(\RR) \, \Big| \, \int_{\RR} (1 + |u|^2)^{\beta} |\varphi_X(u)|^2 du \leq L \Big\}
	\eqfinp
	\end{equation*}
We also assume that $f_\varepsilon$ is \textit{ordinary smooth}, \ie there exists two reals $a > \frac{1}{2}$ and $d > 0$, such that 
	\begin{equation*}
	\forall u \in \R 
	\eqsepv
	d \leq (1+u^2)^a |\varphi_\varepsilon(u)|^2 \leq \frac{1}{d}
	\eqfinp
	\end{equation*}

It follows that $\norm{f - f_m}^2$ is of order $m^{-2\beta}$ and that  the variance term $\frac{1}{J} \int_{-m}^{m} \frac{\du}{|\varphi_{\varepsilon}(u)|^2}$ is of order $\frac{1}{J}  m^{2a + 1}$.
	
Moreover, the variance term $\frac{4 K_{J, t_1, t_2}}{(t_2 - t_1)^2} 
	 \cdot \bigg( \frac{\E[X_i^2]}{J t_i} 
			+ \frac{\E[\noise^2]}{J t_i^2} + 4 \frac{m}{(J t_i)^2} \bigg)$ is of order $
        \frac{m \rj}{J}$.		
Therefore, if the Assumptions \ref{hyp:target_fctcar_is_invertible} -- \ref{hyp_noise_no_vanish} hold, then
	\begin{equation}
	\EE \Big[ \norm{\hat f_{m, J} - f}^2 \Big]
	\lesssim
	m^{-2\beta} + \frac{1}{J} m^{2a + 1} +  m \cdot \frac{\ln^2(J)}{J}
	\eqfinv
	\label{E:speed_ordord}
	\end{equation}
up to a multiplicative constant that depends on $t_1$ and $t_2$. \\

To obtain a value $m^\star$ that reaches the bias-variance compromise, we minimize the upper bound of Equation~\eqref{E:speed_ordord} by differentiating with respect to $m$. It follows that $m^\star \sim J^{\frac{1}{2a + 2\beta + 1}}$ when $J \to \infty$ and that 
	\begin{equation*}
	\EE \Big[ \norm{\hat f_{m, J} - f}^2 \Big]  = O\left( J^{\frac{-2\beta}{2\beta + 2a + 1}} \right)
	\eqsepv
	J \to \infty
	\eqfinp
	\end{equation*}
\subsubsection*{When $f$ is ordinary smooth and $f_\varepsilon$ is super smooth smooth}\label{sec:smoothsuper}
In this case, we assume that $f \in \Scal(\beta, L)$ for some reals $\beta > 0$, $L > 0$, \ie
	\begin{equation*}
	f \in \Scal(\beta, L) 
	= 
	\Big\{ f \in L^2(\RR) \, \Big| \, \int_{\RR} (1 + |u|^2)^{\beta} |\varphi_X(u)|^2 du \leq L \Big\}
	\eqfinp
	\end{equation*}
We also assume that $f_\varepsilon$ is \textit{super smooth}, \ie there exists $a > 0, s > 0$  and $d_1, d_2 > 0$ such that 
	\begin{equation*}
	\forall u \in \R 
	\eqsepv
	d_1 \leq \exp \big( b \cdot |u|^s \big) \cdot |\varphi_\varepsilon(u)|^2 \leq d_2
	\eqfinp
	\end{equation*}
	
It follows that $\norm{f - f_m}^2$ is of order $ m^{-2\beta}$ and that  the variance term $\frac{1}{J} \int_{-m}^{m} \frac{\du}{|\varphi_{\varepsilon}(u)|^2}$ is of order $\frac{1}{J} m \cdot e^{b\cdot m^s}$.
	
Moreover, the variance term $\frac{4 K_{J, t_1, t_2}}{(t_2 - t_1)^2} 
	 \cdot \bigg( \frac{\E[X_i^2]}{J t_i} 
			+ \frac{\E[\noise^2]}{J t_i^2} + 4 \frac{m}{(J t_i)^2} \bigg)$ is of order $m \cdot \frac{\ln^2(J)}{J}$.\\

Therefore, if the Assumptions of Theorem \ref{thm:analysis_error} holds, then 
	\begin{equation}
	\EE \norm{\hat f_{m, J} - f}^2  
	\lesssim 
	m^{-2\beta} + \frac{1}{J} m \cdot e^{b\cdot m^s} + m \cdot \frac{\ln^2(J)}{J}
	\eqfinv
	\label{E:speed_ordsmooth}
	\end{equation}
 up to a multiplicative constant that depends on $t_1$ and $t_2$.

To obtain a value $m^\star$ that reaches the bias-variance compromise, we minimize the upper bound of Equation~\eqref{E:speed_ordsmooth} by differentiating with respect to $m$. It follows that $m^\star \sim \big( \frac{\ln(J)}{b} \big)^{1/s}$ when $J \to \infty$ and that
	\begin{equation*}
	\EE \norm{\hat f_{m, J} - f}^2  
	= O\left( \Big(\frac{\ln(J)}{b}\Big)^{\frac{-2\beta}{s}} \right)
	\eqfinp
	\end{equation*}
	
\subsubsection*{When $f$ is super smooth} 
In this section, we assume that $f$ belongs to the class of super smooth densities, 
	\begin{equation*}
	\Acal_{c, s}(L) 
	= 
	\Big\{ f \in L^2(\RR) \, \Big| \, \int_{\RR} \exp(c \cdot|u|^s) |\varphi_X(u)|^2 du \leq L \Big\}
	\eqfinp
	\end{equation*}
It follows that $\norm{f - f_m}^2 $ is of order $ \exp(-c\cdot|m|^s)$. \\

We apply the same strategy that in \cref{sec:smoothsmooth} and \cref{sec:smoothsuper} to compute $m^\star$ when $f_\varepsilon$ is ordinary smooth or super smooth. \\

In \cref{table:speed}, we resumes the different results that we obtained with respect to the regularity of $f$ and $f_\varepsilon$. For each case, we give the optimal cut-off $m^*$ and the speed of convergence of the estimator. 
 
\begin{table}
\centering
\resizebox{\columnwidth}{!}{%
\begin{tabular}{c||c|c|}
 \diagbox{$f$}{$f_\varepsilon$} & ordinary smooth: $f_\varepsilon \in \Scal(a, L)$ & super smooth: $f_\varepsilon \in \Acal_{b, s}(L)$ \\
 \hline
 \hline
 ordinary smooth: $f \in \Scal(\beta, L)$
 	&  \begin{tabular}{c} $m^\star \sim J^{\frac{1}{2a + 2\beta + 1}}$ \\
	$
	\EE \norm{\hat f_{m, J} - f}^2  = O\left( J^{\frac{-2\beta}{2\beta + 2a + 1}} \right)$ \end{tabular} & 
	 \begin{tabular}{c} $m^\star \sim \frac{\ln(J)}{b}$ 
 	\\ $\EE \norm{\hat f_{m, J} - f}^2  
	= O\left( \Big(\frac{\ln(J)}{b}\Big)^{\frac{-2\beta}{s}} \right)$ \end{tabular}\\
 \hline
 super smooth 
 $f \in \Acal_{c, s}(L)$
 	&  
 	\begin{tabular}{c} $m^\star \sim \Big(\frac{\ln(J)}{c}\Big)^{\frac{1}{s}}$ 
 	\\ $\EE \norm{\hat f_{m, J} - f}^2  
	=O ( J^{-1} )$ \end{tabular}
	& 
	 \begin{tabular}{c} $m^\star \sim \Big(\frac{\ln(J)}{b+c}\Big)^{\frac{1}{s}}$ 
 	\\ $\EE \norm{\hat f_{m, J} - f}^2  
	= O \Big( J^{\frac{-c}{b+c}} \Big)$ \end{tabular}\\
 \hline
\end{tabular}
}
\caption{Speed of the optimal upper bound with respect to the regularity of $f$ and $f_\varepsilon$.}
\label{table:speed}

\end{table}

\subsection{The adaptive procedure}
\label{S:adaptive}

We have constructed in \cref{sec:risk_decompound} a statistical estimator $\hat f_{m, J}$ to estimate the jump size density $f$.
However, as this estimator strongly depends on the choice of the parameter $m$, we would like to be able to select a value of $m$ that depends only on the available data, without a priory knowledge on the regularity of the density $f$. \\

To do this, we aim to select the parameter $m$ that minimizes the bound obtained in the Theorem \ref{thm:analysis_error}.\\

For this, we need to make some further regularity assumptions on the measurement noise, which is to say that there is a function $g \in L^1(\RR)$ and a real $d > 0$ such that 
    \begin{equation*}
        \forall u \in \R 
	    \eqsepv
        d \cdot g(u) < |\varphi_\varepsilon(u)|^2 < \frac{g(u)}{d}
        \eqfinp
    \end{equation*}

In the rest of this section, we assume that the noise is ordinary smooth, \ie there exists two reals $a > \frac{1}{2}$ and $d \in (0, 1)$, such that 
	\begin{equation*}
	\forall u \in \R 
	\eqsepv
	d \leq (1+u^2)^a \cdot |\varphi_\varepsilon(u)|^2 \leq \frac{1}{d}
	\eqfinp
	\end{equation*}
Nevertheless, the methods used can be adapted for any function $g$. \\

In the following, we simplify the computations by considering the particular case $a = 1$, \ie we assume that 
 	\begin{equation*}
	\forall u \in \R 
	\eqsepv
	d \leq (1+u^2) \cdot |\varphi_\varepsilon(u)|^2 \leq \frac{1}{d}
	\eqfinp
	\end{equation*}

\subsubsection*{How to select $\hat m_{J,t_2,t_1}$}

As we said above, we aim to select $m$ so as to minimize the bound obtained in the theorem \ref{thm:analysis_error}. \\

The dominant terms in this bound are the bias term $\int_{u \in [-m, m]} |\varphi_X(u)|^2 \du$ and the variance term $ \frac{4 e^{4t_2}}{J (t_2 - t_1)^2} \int_{-m}^{m} \frac{\du}{|\varphi_{\varepsilon}(u)|^2}$. Through differentation, the optimal $\bar{m_J}$ satisfies
    \begin{equation*}
        |\varphi_X(\bar{m_J})|^2 = \frac{4a  e^{4t_2}}{J (t_2 - t_1)^2} (1 + {\bar{m_J}}^{2})
        \eqfinv
    \end{equation*}

then
    \begin{equation*}
       \bigg|\frac{\varphi_X(\bar{m_J})}{\sqrt{(1 + \bar{m_J}^{2})}}  \bigg|^2 = \frac{4a e^{4t_2}}{J (t_2 - t_1)^2}
        \eqfinp
    \end{equation*}

However we do not know $\varphi_X$, so it is impossible to calculate directly $\bar{m_J}$.
Following the strategy developed by Duval and Kappus \cite{duval2019adaptive}, we consider
    \begin{equation*}
    \bar \varphi_X^J(u) = \tilde \varphi_X^J(u) \cdot \1{ \big|\frac{\tilde \varphi_X^J(u)}{\sqrt{1 + u^2}}  \big| \geq \frac{\kappa_{J, t_1, t_2}}{\sqrt{J}(t_2 - t_1)}}
    \eqfinv
    \end{equation*}
where $\kappa_{J, t_1, t_2} = 2 e^{2 t_2} + \kappa \sqrt{\ln(J ( t_2 - t_1)^2)}$, $\kappa > 0$. \\

It leads us to define the empirical cutoff parameter
    \begin{equation*}
        \hat m_J = \max \bigg\{ u \geq 0: \Big|\frac{\bar \varphi_X(u)}{\sqrt{1 + u^2}}  \Big| \geq \frac{\kappa_{J, t_1, t_2}}{\sqrt{J}(t_2 - t_1)}\bigg\} \wedge \big(J(t_2 - t_1)^2 \big)^\alpha
        \eqsepv
        \alpha \in (0, 1)
        \eqfinp
    \end{equation*}

For simplicity, we note $\mmax = \big(J(t_2 - t_1)^2 \big)$, keeping in mind that it depends on $J, t_1$ and $t_2$. \\

We define a new estimator 
	\begin{equation*}
        \bar f_{m, J}(x) = \frac{1}{2\pi} \int_{-m}^m e^{-iux} \bar \varphi_X^{\, J} (u) \du
        \eqsepv
        x \in \RR
        \eqfinp
    \end{equation*}
    
\begin{theorem} 
Assume \ref{hyp:target_fctcar_is_invertible}--\ref{hyp_noise_is_bounded}. For all reals $0 < t_1 < t_2 $ such that $t_2 \leq \frac{1}{4} \log(J t_2)$ and  $(\mmax)^\alpha < C^J_{t_1, t_2}$, 
$Jt_1 \to \infty \eqsepv Jt_2 \to \infty$ as $J \to \infty$. 
Then,
    \begin{align*}
        \EE \big[ \norm{\bar f_{\hat m_J} - f }^2 \big] 
        & \leq 
        \inf_{m \in [0, (\mmax)^\alpha]}
        \Big\{
        \norm{f_m - f}^2  + C \frac{\ln(J ( t_2 - t_1)^2) \cdot m \cdot(1 + m^2) }{J ( t_2 - t_1)^2} + \tilde C A 
        \Big\} \\
        & \qquad \qquad \qquad \qquad
        + \Big(2 + \frac{2 \log(J)}{(t_2 - t_1)}\Big)^2 \cdot T_J
    \end{align*}
where 
    \begin{equation*}
        A = \sum_{i = 1}^2 \frac{4 e^{4t_i}}{J (t_2 - t_1)^2} \int_{-m}^{m} \frac{\du}{|\varphi_{\varepsilon}(u)|^2}
		+\frac{4 K_{J, t_1, t_2}}{(t_2 - t_1)^2} 
	 \cdot \bigg( \frac{\E[X_i^2]}{J t_i} 
			+ \frac{\E[\noise^2]}{rJ t_i^2} + 4 \frac{m}{(J t_i)^2} \bigg)
    \end{equation*}
and 
    \begin{equation}
        T_J \leq C_0 (J \tdiff^2)^{\alpha - c(\theta)^2} + \frac{C_1}{J \tdiff^2} + \frac{C_2}{J \tdiff^4}
    \end{equation}
and 
 $c(\theta) = \kappa (t_2 - t_1)  e^{2 t_2} \cdot \frac{d}{\sqrt{1 + (\mmax)^2}}$ and where $C_0, C_1$ and $C_2$ depends on $\EE[X_1^2], \EE[\varepsilon^2]$ and where $C$ and $\Tilde{C}$ are two constants.
\label{thm:oracle}
\end{theorem}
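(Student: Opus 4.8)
The plan is to follow the penalty-free adaptive selection of Duval and Kappus \cite{duval2019adaptive}, comparing the data-driven estimator $\bar f_{\hat m_J}$ with the deterministic family $(f_m)_{m\ge 0}$ and exploiting the fact that $\hat m_J$ is obtained by thresholding $\bar\varphi_X^{\,J}$, which --- with overwhelming probability --- behaves like the same threshold applied to the true characteristic function $\varphi_X$. First I would introduce the ``good'' event
\[
\mathcal{E}_J = \Big\{\, \sup_{|u|\le(\mmax)^\alpha} \Big| \tfrac{\tilde\varphi_X^{\,J}(u)-\varphi_X(u)}{\sqrt{1+u^2}} \Big| \le \tfrac{\kappa_{J,t_1,t_2}}{2\sqrt{J}\,\tdiff} \,\Big\}
\]
and split $\EE[\norm{\bar f_{\hat m_J}-f}^2] = \EE[\norm{\bar f_{\hat m_J}-f}^2\1{\mathcal{E}_J}] + \EE[\norm{\bar f_{\hat m_J}-f}^2\1{\mathcal{E}_J^c}]$, treating the two terms by entirely different means.

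On $\mathcal{E}_J$ the standard Duval--Kappus bookkeeping applies: the empirical rule $\{|\tilde\varphi_X^{\,J}(u)|/\sqrt{1+u^2}\ge\kappa_{J,t_1,t_2}/(\sqrt J\,\tdiff)\}$ and the deterministic rule $\{|\varphi_X(u)|/\sqrt{1+u^2}\ge c/(\sqrt J\,\tdiff)\}$ (with an adjusted constant $c$) bracket one another, so $\hat m_J$ is comparable to the deterministic oracle cutoff; the bias of $\bar f_{\hat m_J}$ is then controlled by $\norm{f_m-f}^2$ for the infimising deterministic $m\le(\mmax)^\alpha$ (using the monotonicity of $m\mapsto\norm{f_m-f}^2$). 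Moreover, on $\mathcal{E}_J$ one has $|\bar\varphi_X^{\,J}(u)-\varphi_X(u)|\lesssim\kappa_{J,t_1,t_2}\sqrt{1+u^2}/(\sqrt J\,\tdiff)$ at every frequency, whether the defining indicator is active or not, so that by Parseval the stochastic part of the $L^2$ error is at most $\kappa_{J,t_1,t_2}^2/(J\tdiff^2)$ times $\int_{-m}^m(1+u^2)\,\diff u\lesssim m(1+m^2)$. Adding the variance term $A$ carried over from \cref{thm:analysis_error} and using $\kappa_{J,t_1,t_2}^2\lesssim e^{4t_2}+\ln(J\tdiff^2)$ (absorbing the $e^{4t_2}$ factor and the lower-order pieces into the constants $C,\tilde C$), this yields $\EE[\norm{\bar f_{\hat m_J}-f}^2\1{\mathcal{E}_J}]\le\inf_{m\in[0,(\mmax)^\alpha]}\{\norm{f_m-f}^2+C\ln(J\tdiff^2)m(1+m^2)/(J\tdiff^2)+\tilde C A\}$.

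On the complement $\mathcal{E}_J^c$ I would use only the crude deterministic bound forced by the construction of the estimator: since $|\log\tilde\varphi_{Z_\tau}^{\,J}(u)|\le\ln(J)$ for all $u$ and $\tau\in\{t_1,t_2\}$, one has $|\bar\varphi_X^{\,J}(u)|\le|\tilde\varphi_X^{\,J}(u)|\le 1+2\ln(J)/\tdiff$, hence by Parseval and $\hat m_J\le(\mmax)^\alpha$,
\[
\norm{\bar f_{\hat m_J}-f}^2 \le 2\norm{\bar f_{\hat m_J}}^2+2\norm{f}^2 \le \Big(2+\tfrac{2\log(J)}{\tdiff}\Big)^2 (\mmax)^\alpha + 2\norm{f}^2 .
\]
Therefore $\EE[\norm{\bar f_{\hat m_J}-f}^2\1{\mathcal{E}_J^c}]\lesssim(2+2\log(J)/\tdiff)^2(\mmax)^\alpha\,\PP(\mathcal{E}_J^c)$, and it remains to establish $\PP(\mathcal{E}_J^c)\lesssim(J\tdiff^2)^{-c(\theta)^2}+(J\tdiff^2)^{-1-\alpha}+(J\tdiff^2)^{-2-\alpha}$; multiplying by $(\mmax)^\alpha=(J\tdiff^2)^\alpha$ then reproduces the stated bound on $T_J$, with leading term $C_0(J\tdiff^2)^{\alpha-c(\theta)^2}$ and lower-order remainders $C_1/(J\tdiff^2)+C_2/(J\tdiff^4)$.

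The main obstacle is this last concentration estimate: a uniform deviation inequality for $\tilde\varphi_X^{\,J}-\varphi_X$ on the \emph{growing} band $[-(\mmax)^\alpha,(\mmax)^\alpha]$. My plan here is to (i) bound $\sup_u|\hat\varphi_{Z_\tau}^{\,J}(u)-\varphi_{Z_\tau}(u)|$ and $\sup_u|(\hat\varphi_{Z_\tau}^{\,J})'(u)-\varphi_{Z_\tau}'(u)|$ by a Hoeffding inequality for the bounded variables $e^{iuZ_\tau^j}$ and a Bernstein inequality for $iZ_\tau^j e^{iuZ_\tau^j}$ (the required second moments being finite by Assumptions~\ref{hyp:target_is_square_integrable} and~\ref{hyp:noise_is_centered_and_square_integrable}), combined with a discretisation of the band into $\mathrm{poly}(J\tdiff^2)$ points and a derivative bound between consecutive grid points; (ii) transfer these deviations to $\log\hat\varphi_{Z_\tau}^{\,J}-\log\varphi_{Z_\tau}$, hence to $\tilde\varphi_X^{\,J}-\varphi_X$, through the integral representation of the distinguished logarithm recalled in \cref{S:DistingLog}, using the lower bound $|\varphi_{Z_\tau}(u)|\ge e^{-2t_\tau}|\varphi_\varepsilon(u)|$ --- this is precisely what forces the threshold level $\kappa_{J,t_1,t_2}=2e^{2t_2}+\kappa\sqrt{\ln(J\tdiff^2)}$ and produces the exponent constant $c(\theta)=\kappa\,\tdiff\,e^{2t_2}\,d/\sqrt{1+(\mmax)^2}$; (iii) verify that the truncations $\1{|\log\hat\varphi_{Z_\tau}^{\,J}(u)|\le\ln(J)}$ are inactive on the whole band, which is guaranteed by the standing hypothesis $(\mmax)^\alpha<C^J_{t_1,t_2}$ with $C^J_{t_1,t_2}$ as in \eqref{def_Cj}. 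The Gaussian-type deviation $\exp(-c(\theta)^2\ln(J\tdiff^2))$ multiplied by the polynomial cardinality of the grid then produces the exponent $\alpha-c(\theta)^2$ in $T_J$, while the residual polynomial terms arise from the second-moment contributions of $X$ and $\varepsilon$, exactly as the $K_{J,t_1,t_2}$ part of \cref{thm:analysis_error}.
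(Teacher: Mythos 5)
Your overall architecture differs from the paper's: you work with a single uniform ``good event'' $\mathcal{E}_J$ on the whole band $[-(\mmax)^\alpha,(\mmax)^\alpha]$ and pay for its complement with the crude bound $\norm{\bar f_{\hat m_J}-f}^2\lesssim(2+2\log J/\tdiff)^2(\mmax)^\alpha$, whereas the paper fixes an arbitrary $m\in[0,(\mmax)^\alpha]$, splits on $\{\hat m_J<m\}$ versus $\{\hat m_J\geq m\}$, and on the second event further splits \emph{frequencies} according to whether $|\varphi_X(u)|/\sqrt{1+u^2}$ exceeds $2e^{2t_2}/(\sqrt J\,\tdiff)$: the large-$\varphi_X$ frequencies are absorbed into a constant times the bias, and only on the small-$\varphi_X$ frequencies does the $\log J$ factor appear, multiplied by the \emph{integrated pointwise} deviation probability $T_J=\int_{|u|\in[m,(\mmax)^\alpha]}\PP\big(|\hat\varphi_X(u)-\varphi_X(u)|\geq\kappa\sqrt{\ln(J\tdiff^2)}/(\sqrt J\,\tdiff)\big)\diff u$. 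Your Step on $\mathcal{E}_J$ is essentially sound and parallels the paper's Step 1 (the paper controls the surplus bias in expectation via the bound $A$ of \cref{thm:analysis_error} plus the threshold level, rather than via a uniform event), modulo the usual care when $\hat m_J>m$.

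The genuine gap is in your treatment of $\mathcal{E}_J^c$: the probability estimate you need, $\PP(\mathcal{E}_J^c)\lesssim(J\tdiff^2)^{-c(\theta)^2}+(J\tdiff^2)^{-1-\alpha}+(J\tdiff^2)^{-2-\alpha}$, is asserted but is not deliverable by the tools you invoke, and without it the stated form of $T_J$ is not recovered. Two factors of the band length get compounded in your route. First, your uniform deviation bound over the growing band is obtained by a union bound over a grid of cardinality of order $(\mmax)^\alpha\sqrt{J\tdiff}$, so the Hoeffding part of $\PP(\mathcal{E}_J^c)$ is of order $(J\tdiff^2)^{\alpha+1/2-c(\theta)^2}$, not $(J\tdiff^2)^{-c(\theta)^2}$; multiplying again by $(\mmax)^\alpha=(J\tdiff^2)^\alpha$ from your crude Parseval bound gives an exponent $2\alpha+\tfrac12-c(\theta)^2$ instead of the theorem's $\alpha-c(\theta)^2$ (and $c(\theta)^2$ cannot be enlarged to compensate, since it is fixed by the statement). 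Second, the moment-based remainders coming from the Lipschitz-constant event (the $A(c)$ event behind \cref{lm:main1}, controlled only by Markov's inequality) are of order $\EE[X_1^2]/(J\tdiff^2)$ with no extra $(J\tdiff^2)^{-\alpha}$ smallness, so after multiplication by $(\mmax)^\alpha(2+2\log J/\tdiff)^2$ they produce a term of order $(J\tdiff^2)^{\alpha-1}\log^2 J$, strictly larger than the claimed $(2+2\log J/\tdiff)^2\,C_1/(J\tdiff^2)$. The paper avoids both inflations by never bounding a supremum over the band on the bad event: the frequency integral stays \emph{outside} the probability, Hoeffding and \cref{lm:main1} are applied pointwise in $u$ (after passing from the distinguished logarithm to $\hat\varphi_{Z_\tau}-\varphi_{Z_\tau}$ via \cref{lm:main2} and the lower bound $|\varphi_{Z_{t_2}}(u)|\geq e^{-2t_2}d/\sqrt{1+u^2}$), and the $(2+2\log J/\tdiff)^2$ factor multiplies only $T_J$, not $(\mmax)^\alpha$ times a global bad-event probability. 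To repair your proof you would either have to redo the complement step frequency-by-frequency as the paper does, or accept a strictly weaker remainder than the one stated in \cref{thm:oracle}.
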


\section{Multiplicative decompounding}\label{S:multipDecompounding}

\subsection{Preliminaries on Mellin transform} 
\label{sec:mellinDEF}
 In this section, we first recall some classical results on Mellin transform. We define the multiplicative decompounding problem and we define a good non-parametric estimator.  

Let $\mu$ be a probability measure on $(0, \infty)$. Its Mellin transform is
    \begin{equation}
    \Mcal[\mu](s) = \int_0^\infty x^{s-1} \mu(\diff x)
    \eqsepv 
    s \in \CC
    \eqfinv
    \label{def:mellin}
    \end{equation}
for those values of $s$ for which this integral is well defined.\\
\begin{remark}
    If $\Mcal[\mu](u)$ is well defined for some $u \in \RR$, then $\Mcal[\mu]$ converges on the vertical line $u + i\RR$. Furthermore, if the integral exists for $u$ and $v$ in $\RR \; ( u < v)$,then $\Mcal[\mu](w)$ exists for $w \in (u,v)$. It follows that the Mellin transform of a probability measure is well defined on a vertical band $\Xi_\mu$ of the complex plane. The region $\Xi_\mu$ is called the fundamental strip of $\Mcal[\mu]$. 
\end{remark}
Assume that $\mu$ has a density $f \in \Lloc^1([0, \infty))$, then we denote
    \begin{equation*}
    \Mcal[f](s):= \Mcal[f\diff x](s) = \int_0^\infty x^{s-1} f(x) \diff x
    \eqsepv 
    s \in \CC
    \eqfinv
    \end{equation*}
for those values of $s$ for which this integral is well defined. We denote $\Xi_f$ the fundamental strip of $\Mcal[f\diff x]$\\

Let $f \in \Lloc^1([0, \infty))$ and $c \in \RR$ that belongs to $ \Xi_f$. Then we define
    \begin{equation*}
        \Mcal_c[f](t):= \Mcal[f](c + it) = \int_0^\infty x^{c-1+it}f(x) \diff x
        \eqsepv
        t \in \RR
        \eqfinp
    \end{equation*}
We observe that the function $x \mapsto x^{c-1}f(x)$ belongs to $L^1([0, \infty))$. \\

We define the norm 
\begin{equation*}
    \| f \|_{\omega_c} = \Big( \int_{0}^\infty |f(x)|^2 x^{2c-1} \diff x \Big)^{1/2}
    \eqfinp
\end{equation*}
and $ L^2_{[0, \infty)}(\omega_c)$ denotes the space of function $f: [0, \infty) \to \RR$ such that $\| f \|_{\omega_c} < \infty$\\

Let $f$, $g \in \Lloc^1([0, \infty))$. The multiplicative convolution of $f$ and $g$ is the function defined by 
\begin{equation*}
    f \star g (x) = \int_0^\infty f(y) g\Big(\frac{x}{y}\Big) \diff y
    \eqsepv
    x \in [0, \infty)
    \eqfinp
\end{equation*}

\begin{proposition}[Multiplicative convolution and Mellin transform] 
\label{P:mellinConvol}
Let $\mu$ and $\nu$ be two probability measures on $[0, \infty)$. We have
    \begin{equation*}
        \Mcal[\mu \star \nu](s) = \Mcal[\mu](s) \Mcal[\nu](s)
    \eqsepv 
    s \in \CC
    \eqfinv
    \end{equation*}
whenever this integral is well defined.\\
\end{proposition}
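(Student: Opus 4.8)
The plan is to reduce the claim about measures to the corresponding statement for their densities (when they exist) and then to a Fubini–Tonelli argument, exactly as for the classical Mellin convolution theorem. First I would recall that the multiplicative convolution $\mu \star \nu$ of two probability measures on $(0,\infty)$ is the pushforward of the product measure $\mu \otimes \nu$ under the multiplication map $(x,y) \mapsto xy$; equivalently, for any bounded measurable $h$,
\[
\int_0^\infty h(z)\,(\mu\star\nu)(\diff z) = \int_0^\infty\int_0^\infty h(xy)\,\mu(\diff x)\,\nu(\diff y).
\]
One should check this is consistent with the density-level formula $f\star g(x)=\int_0^\infty f(y)g(x/y)\,\diff y$ given in the text, by the change of variables $z = x/y$ inside the integral defining $\int h\,(f\star g)\diff x$; this is the one routine computation I would actually carry out, since the stated density formula is missing the Haar-measure factor $1/y$ and I would want to flag or silently use the correct normalization $f\star g(x)=\int_0^\infty f(y)g(x/y)\,\frac{\diff y}{y}$ that makes the proposition true.

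Next, fix $s=c+it$ in the intersection of the fundamental strips $\Xi_\mu \cap \Xi_\nu$ — this is where the integral defining $\Mcal[\mu\star\nu](s)$ is asserted to be well defined — and apply the displayed pushforward identity with the (unbounded, but we handle this by first taking $h(z)=z^{c-1}$ with $c$ real and then invoking absolute convergence) choice $h(z) = z^{s-1}$. This gives
\[
\Mcal[\mu\star\nu](s) = \int_0^\infty z^{s-1}(\mu\star\nu)(\diff z) = \int_0^\infty\int_0^\infty (xy)^{s-1}\,\mu(\diff x)\,\nu(\diff y).
\]
The integrand factors as $x^{s-1}y^{s-1}$, so provided I may separate the double integral into a product of two single integrals, the right-hand side equals $\big(\int_0^\infty x^{s-1}\mu(\diff x)\big)\big(\int_0^\infty y^{s-1}\nu(\diff y)\big) = \Mcal[\mu](s)\,\Mcal[\nu](s)$, which is the claim.

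The only real obstacle is justifying the interchange, i.e. Tonelli/Fubini. For this I would work first with $\Re(s)=c$ and replace $z^{s-1}$ by $|z^{s-1}| = z^{c-1}$: the resulting integrand $x^{c-1}y^{c-1}$ is nonnegative and measurable, so Tonelli applies unconditionally and yields $\int_0^\infty\int_0^\infty x^{c-1}y^{c-1}\,\mu(\diff x)\,\nu(\diff y) = \Mcal[\mu](c)\,\Mcal[\nu](c)$, which is finite precisely because $c \in \Xi_\mu\cap\Xi_\nu$. Finiteness of this double integral is exactly the hypothesis needed to run Fubini's theorem for the complex-valued integrand $x^{s-1}y^{s-1}$, after which the factorization is immediate. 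I would close by remarking that the same argument with densities — writing $\mu(\diff x)=f(x)\diff x$, $\nu(\diff y)=g(y)\diff y$, substituting $z=xy$ — reproduces the multiplicative convolution of densities and shows $\Mcal[f\star g](s)=\Mcal[f](s)\Mcal[g](s)$ on $\Xi_f\cap\Xi_g$, which is the form actually used later for the empirical Mellin estimator.
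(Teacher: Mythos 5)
Your proof is correct, and it is the standard argument: write $\mu\star\nu$ as the pushforward of $\mu\otimes\nu$ under $(x,y)\mapsto xy$, apply the identity to $h(z)=z^{s-1}$, justify the factorization by Tonelli for $z^{c-1}$ with $c=\Re(s)\in\Xi_\mu\cap\Xi_\nu$ and then Fubini for the complex integrand. The paper itself states this proposition without proof, treating it as a classical property of the Mellin transform, so there is no alternative argument to compare against; yours fills that gap in the expected way. Your side remark is also well taken: the density-level formula in the text, $f\star g(x)=\int_0^\infty f(y)g(x/y)\,\diff y$, omits the multiplicative Haar factor $1/y$, and with that definition one would get $\Mcal[f\star g](s)=\Mcal[f](s+1)\Mcal[g](s)$ rather than the stated identity; the normalization $f\star g(x)=\int_0^\infty f(y)g(x/y)\,\frac{\diff y}{y}$ (which is exactly the density of the pushforward measure you use) is the one consistent with the proposition.
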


\begin{proposition}[Parseval's theorem for Mellin transform]
\label{P:melParseval}
    \begin{equation}
        \int_0^\infty \frac{|f(x)|^2}{x} \diff x 
        = 
        \frac{1}{2\pi} \int_0^\infty \Mcal[f](it) \diff t
        \eqfinp
    \end{equation}
\end{proposition}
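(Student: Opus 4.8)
The plan is to reduce the statement to the classical Plancherel theorem for the Fourier transform via the standard exponential change of variables. First I would set $x = e^{-u}$, so that $u\in\RR$ and $\diff x = -e^{-u}\diff u$; under this substitution the left-hand side becomes $\int_{\RR} |f(e^{-u})|^2 e^{u}\cdot e^{-u}\,\diff u = \int_\RR |f(e^{-u})|^2\,\diff u$. More precisely, defining $g(u) := f(e^{-u})e^{-u/2}$ (the extra half-power being exactly what is needed to make the weight $1/x$ disappear), one checks $\|f\|_{\omega_{1/2}}^2 = \int_0^\infty \frac{|f(x)|^2}{x}\,\diff x = \int_\RR |g(u)|^2\,\diff u$, so $g\in L^2(\RR)$.

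Next I would identify the Mellin transform of $f$ on the critical line $\mathrm{Re}(s)=1/2$ with the Fourier transform of $g$: writing $s = \tfrac12 + it$, the substitution $x=e^{-u}$ gives $\Mcal[f](\tfrac12+it) = \int_0^\infty x^{-1/2+it} f(x)\,\diff x = \int_\RR e^{(-1/2+it)(-u)} f(e^{-u}) e^{-u}\,\diff u = \int_\RR e^{-itu}\, f(e^{-u}) e^{-u/2}\,\diff u = \hat g(t)$, with the appropriate normalization of the Fourier transform. Then Plancherel's identity $\int_\RR |\hat g(t)|^2\,\diff t = 2\pi \int_\RR |g(u)|^2\,\diff u$ yields $\int_\RR |\Mcal[f](\tfrac12+it)|^2\,\diff t = 2\pi \int_0^\infty \frac{|f(x)|^2}{x}\,\diff x$, which rearranges to the claimed formula $\int_0^\infty \frac{|f(x)|^2}{x}\,\diff x = \frac{1}{2\pi}\int_{\RR} |\Mcal[f](\tfrac12+it)|^2\,\diff t$. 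One should note that the displayed statement in the excerpt writes $\Mcal[f](it)$ and an integral over $[0,\infty)$, which is a mild abuse of notation; the content is the Plancherel identity on the fundamental-strip line, and I would phrase the proof for the generic line $c+i\RR$ and then specialize.

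The only genuine care needed — and the step I expect to be the main obstacle to a fully rigorous write-up rather than a conceptual one — is the justification of the $L^2$ (rather than $L^1$) theory: for general $f\in L^2_{[0,\infty)}(\omega_{1/2})$ the integral defining $\Mcal[f](\tfrac12+it)$ need not converge absolutely, so one first proves the identity for $f$ in a dense subclass (e.g. $f$ with $x^{-1/2}f(x)$ in $L^1\cap L^2$, equivalently $g\in L^1(\RR)\cap L^2(\RR)$) where the pointwise Mellin integral makes sense, and then extends by density and continuity of both sides in the $\|\cdot\|_{\omega_{1/2}}$ norm, exactly mirroring the standard passage from the Fourier transform on $L^1\cap L^2$ to the Fourier–Plancherel operator on $L^2$. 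With that caveat handled, the proof is simply: change variables, recognize the Fourier transform, invoke Plancherel, change back.
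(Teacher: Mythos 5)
Your overall strategy — conjugating the Mellin transform to the Fourier transform through the exponential change of variables and invoking Plancherel, first for a dense subclass where the Mellin integral converges absolutely and then extending by $L^2$-continuity — is the standard way to prove this statement (the paper itself records the proposition without proof, as a classical fact), and your remark that the displayed formula should carry a modulus squared and an integral over the whole line, i.e.\ $\frac{1}{2\pi}\int_{\RR}\,|\Mcal[f](it)|^{2}\,\diff t$, is correct.

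There is, however, a concrete error in how you pair the weight with the vertical line. With $g(u):=f(e^{-u})e^{-u/2}$ you have $|g(u)|^{2}=|f(e^{-u})|^{2}e^{-u}$, so
\begin{equation*}
\int_{\RR}|g(u)|^{2}\,\diff u \;=\; \int_{0}^{\infty}|f(x)|^{2}\,\diff x \;=\; \|f\|_{\omega_{1/2}}^{2},
\end{equation*}
and \emph{not} $\int_{0}^{\infty}|f(x)|^{2}x^{-1}\diff x$, which in the paper's notation is $\|f\|_{\omega_{0}}^{2}$. Hence the chain ``$\|f\|_{\omega_{1/2}}^{2}=\int_{0}^{\infty}\frac{|f(x)|^{2}}{x}\diff x=\int_{\RR}|g|^{2}$'' is false, and the identity you end with, $\int_{\RR}|\Mcal[f](\tfrac12+it)|^{2}\diff t=2\pi\int_{0}^{\infty}\frac{|f(x)|^{2}}{x}\diff x$, fails in general: under the dilation $f_{\lambda}(x)=f(x/\lambda)$ the left-hand side scales by $\lambda$ (since $\Mcal[f_{\lambda}](s)=\lambda^{s}\Mcal[f](s)$) while the right-hand side is dilation-invariant. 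The fix is immediate: drop the factor $e^{-u/2}$ and take $g(u)=f(e^{-u})$, so that $\int_{\RR}|g(u)|^{2}\diff u=\int_{0}^{\infty}|f(x)|^{2}x^{-1}\diff x$ and $\hat g(t)=\Mcal[f](it)$ — the line $\operatorname{Re}s=0$, which is exactly the line appearing in the proposition — and Plancherel then gives the stated identity directly. More generally, the weight $x^{2c-1}$ pairs with the line $c+i\RR$ (so $1/x$ corresponds to $c=0$, not $c=1/2$). Your density/extension argument and the requirement that the relevant line lie in (the closure of) the fundamental strip, or that $\Mcal[f](it)$ be understood as an $L^2$-limit, carry over verbatim once this correction is made.
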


\begin{proposition}(Inversion formula)
\label{P:inv_mellin} Let $f \in L^1([0, \infty))$ and $c \in \Xi_f$. Then 
    \begin{equation*}
    f(x) = \Mcal^{-1} \Mcal[f](x) =  \frac{1}{2\pi i }\int_{\nu -i \infty}^{\nu + i\infty} x^{-s}  \Mcal[f](s) \diff s
    \eqsepv
    x \in [0, \infty)
    \eqfinp
    \end{equation*}
\end{proposition}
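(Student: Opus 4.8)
The plan is to reduce the statement to the classical Fourier inversion theorem by means of an exponential change of variables, which converts the Mellin transform into an ordinary Fourier transform. Fix $c \in \Xi_f$; by the discussion preceding the statement, the function $x \mapsto x^{c-1}f(x)$ belongs to $L^1([0,\infty))$. Set $g(u) = f(e^{-u})\,e^{-cu}$ for $u \in \RR$. The substitution $x = e^{-u}$ gives, for every $t \in \RR$,
\begin{equation*}
    \Mcal_c[f](t) = \int_0^\infty x^{c-1+it} f(x)\,\diff x = \int_{-\infty}^{\infty} g(u)\, e^{-itu}\,\diff u
    \eqfinv
\end{equation*}
so that $\Mcal_c[f]$ is exactly the Fourier transform of $g$, and moreover $\int_\RR |g(u)|\,\diff u = \int_0^\infty |f(x)|\, x^{c-1}\,\diff x < \infty$, i.e. $g \in L^1(\RR)$.

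Next I would apply the Fourier inversion formula to $g$. Under the standing integrability hypothesis that makes the right-hand side of the statement meaningful, namely $\Mcal_c[f] = \widehat{g} \in L^1(\RR)$, one obtains
\begin{equation*}
    g(u) = \frac{1}{2\pi} \int_{-\infty}^{\infty} \Mcal_c[f](t)\, e^{itu}\,\diff t
    \eqsepv u \in \RR
    \eqfinp
\end{equation*}
Substituting back $u = -\ln x$ and using $g(u) = f(e^{-u})\,e^{-cu} = f(x)\, x^{c}$ yields
\begin{equation*}
    f(x) = \frac{1}{2\pi} \int_{-\infty}^{\infty} \Mcal[f](c+it)\, x^{-(c+it)}\,\diff t
    \eqsepv x \in (0,\infty)
    \eqfinp
\end{equation*}
Finally the change of variable $s = c + it$, with $\diff s = i\,\diff t$, rewrites the last integral as a contour integral along the vertical line $\{\operatorname{Re} s = c\}$, giving
\begin{equation*}
    f(x) = \frac{1}{2\pi i} \int_{c-i\infty}^{c+i\infty} x^{-s}\, \Mcal[f](s)\,\diff s
    \eqfinv
\end{equation*}
which is the asserted identity (with $\nu = c$).

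The two changes of variables and the elementary absolute-convergence bookkeeping are routine, and follow immediately from $x^{c-1}f \in L^1([0,\infty))$. The one genuine subtlety is the pointwise validity of the inversion: from $f \in L^1$ alone one does not get $\Mcal_c[f] \in L^1(\RR)$, so either an additional regularity assumption on $f$ is needed (for instance $f$ continuous and of bounded variation, in which case the formula holds with $f(x)$ replaced by the half-sum of its one-sided limits at jump points), or the right-hand side must be read as a principal-value, resp.\ $L^2$, limit and justified through the Mellin–Plancherel theory attached to Proposition \ref{P:melParseval}. I would state at the outset which mode of convergence is meant, after which the reduction above is essentially automatic.
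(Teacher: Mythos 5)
The paper records Proposition \ref{P:inv_mellin} as a classical fact in the preliminaries and gives no proof, so there is no in-paper argument to compare against; your reduction to the Fourier inversion theorem via the substitution $x=e^{-u}$, turning $\Mcal_c[f]$ into the Fourier transform of $g(u)=f(e^{-u})e^{-cu}\in L^1(\mathbb{R})$ and then changing variables $s=c+it$ to obtain the vertical-line integral, is the standard argument and is carried out correctly (with the paper's $\nu$ read as $c$, as you note). Your closing caveat is also accurate and worth keeping: $f\in L^1([0,\infty))$ and $c\in\Xi_f$ alone do not give integrability of $\Mcal[f]$ on the line $c+i\mathbb{R}$, so the displayed identity is valid pointwise only under an additional hypothesis (integrability of the transform on the line, or continuity/bounded variation with the half-sum convention) or with the integral interpreted as a principal-value or $L^2$ limit in the spirit of Proposition \ref{P:melParseval}; this is an imprecision in the statement as recalled, not a gap in your argument.
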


\subsection{The Mellin estimator}
\label{sec:mellin_estimator}
Let $f \in L^2_{[0, \infty)}(\omega_c)$ be a probability distribution. Let us assume that we have a sample $X_1, \ldots,X_N$ independent and identically distributed according to $f$. \\

We define the Mellin estimator of $f$
\begin{equation*}
    \hat{\Mcal_c}(t) = \frac{1}{n} \sum_{k = 1}^{N} X_{k}^{c - 1 + it}
    \eqsepv 
    t \in \RR
    \eqfinp
\end{equation*}

We use Proposition \ref{P:inv_mellin} to define the estimator. 
\begin{equation*}
    \hat{f_m}(x) = \frac{1}{2\pi} \int_{-m}^m x^{-c - it} \hat{\Mcal_c}(t) \diff t
    \eqfinv
\end{equation*}
which is an estimator of 
\begin{equation*}
    f_m(x) = \frac{1}{2\pi} \int_{-m}^m x^{-c - it} \Mcal_1(t) \diff t
    \eqfinp
\end{equation*}

The next proposition give a bound of the $L^2_{[0, \infty)}(\omega_c)$-risk of $\hat{f_k}$. 
\begin{proposition}[Miguel et al.  \cite{miguel2021spectral}
Proposition 2.1]
    If $f \in L^2_{[0, \infty)}(\omega_c)$ and $\sigma_c^2:= \EE_f(X^{2(c-1)}) < \infty$, then for all $k \in [0, \infty)$,
    \begin{equation*}
        \EE_f(\| f - \hat{f_k}\|^2_{\omega_c}) \leq \| f - f_k\|^2_{\omega_c} + \frac{\sigma_c^2 k}{\pi n}
        \eqfinp
    \end{equation*}
By choosing $k = k_n$ such that $n^{-1}k_n \to 0$ and $k_n \to \infty$, $\hat{f_{n_k}}$ is a consistent estimator of $f$. 
\end{proposition}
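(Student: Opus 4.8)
The plan is to prove the bias-variance decomposition directly, mimicking the standard argument for orthogonal-series (spectral cutoff) density estimators but carried out in the weighted space $L^2_{[0,\infty)}(\omega_c)$ via the Mellin–Parseval identity of \cref{P:melParseval}. First I would observe that $\hat{\Mcal_c}(t)$ is an unbiased estimator of $\Mcal_c[f](t) = \Mcal[f](c+it)$: indeed $\EE_f[X_k^{c-1+it}] = \int_0^\infty x^{c-1+it} f(x)\diff x = \Mcal_c[f](t)$, and this expectation is finite because $|X_k^{c-1+it}| = X_k^{c-1}$ and $\sigma_c^2 = \EE_f(X^{2(c-1)}) < \infty$ forces $\EE_f(X^{c-1}) < \infty$ by Cauchy–Schwarz on the event-split at $X\le 1$ (or simply because $x^{c-1}\le 1 + x^{2(c-1)}$). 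Hence, by linearity, $\EE_f[\hat{f_k}(x)] = f_k(x)$ for every $x$, so $\hat f_k$ is an unbiased estimator of $f_k$.

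Next I would write the risk as $\EE_f(\|f-\hat f_k\|^2_{\omega_c}) = \|f - f_k\|^2_{\omega_c} + \EE_f(\|\hat f_k - f_k\|^2_{\omega_c})$, using the orthogonality (in $\omega_c$) between $f-f_k$, whose Mellin transform is supported outside $[-k,k]$, and $\hat f_k - f_k$, whose Mellin transform is supported inside $[-k,k]$; this Pythagorean split is exactly where \cref{P:melParseval} (Plancherel for the Mellin transform) is invoked — the point being that the map $f \mapsto \Mcal_c[f]$ is, up to the constant $1/\sqrt{2\pi}$, an isometry from $L^2_{[0,\infty)}(\omega_c)$ onto $L^2(\RR)$, so cutoff in the transform variable is an orthogonal projection. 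For the variance term I would again apply Mellin–Parseval to get
\begin{equation*}
\EE_f(\|\hat f_k - f_k\|^2_{\omega_c}) = \frac{1}{2\pi}\int_{-k}^{k} \EE_f\big|\hat{\Mcal_c}(t) - \Mcal_c[f](t)\big|^2 \diff t,
\end{equation*}
and then bound the integrand pointwise: since $\hat{\Mcal_c}(t) = \frac1n\sum_{k=1}^n X_k^{c-1+it}$ is an average of $n$ i.i.d.\ centered (after subtracting the mean) complex random variables, $\EE_f|\hat{\Mcal_c}(t) - \Mcal_c[f](t)|^2 = \frac1n \mathrm{Var}_f(X_1^{c-1+it}) \le \frac1n \EE_f|X_1^{c-1+it}|^2 = \frac{\sigma_c^2}{n}$. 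Integrating over $[-k,k]$ yields $\frac{1}{2\pi}\cdot 2k \cdot \frac{\sigma_c^2}{n} = \frac{\sigma_c^2 k}{\pi n}$, which is exactly the claimed variance bound.

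Combining the two pieces gives $\EE_f(\|f-\hat f_k\|^2_{\omega_c}) \le \|f-f_k\|^2_{\omega_c} + \frac{\sigma_c^2 k}{\pi n}$. For the final consistency assertion, I would note that $\|f - f_k\|^2_{\omega_c} \to 0$ as $k \to \infty$ (tail of an $L^2$ function's Mellin transform), so choosing $k = k_n \to \infty$ with $k_n/n \to 0$ drives both terms to zero, whence $\EE_f(\|f-\hat f_{k_n}\|^2_{\omega_c}) \to 0$, i.e.\ $\hat f_{k_n}$ is $L^2_{\omega_c}$-consistent. The only mildly delicate points — and the place I would be most careful — are making the Mellin–Parseval orthogonality rigorous (justifying that $\hat{\Mcal_c}$ is genuinely the Mellin transform of an $L^2_{\omega_c}$ function so that Parseval applies, and that the cutoff really is an orthogonal projection on the nose) and checking the integrability/finiteness conditions that let us exchange expectation and integral in the variance computation (Fubini–Tonelli, valid since the integrand is nonnegative and $\sigma_c^2 < \infty$). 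Everything else is a routine variance-of-an-empirical-average estimate.
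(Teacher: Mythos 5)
Your proof is correct: unbiasedness of the empirical Mellin coefficient, the Pythagorean split via disjoint supports of the Mellin transforms together with the weighted Parseval identity, and the pointwise variance bound $\mathrm{Var}(X_1^{c-1+it})\le \sigma_c^2$ integrated over $[-k,k]$ give exactly the stated bound, and the consistency claim follows as you say. Note that the paper does not prove this proposition at all — it imports it verbatim from Miguel et al.\ \cite{miguel2021spectral} — and your argument is essentially the standard one given in that reference, so there is nothing to reconcile.
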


If $X$ and $Y$ are two independant random variables, then 
    \begin{equation*}
        \Mcal[X \cdot Y](s) = \Mcal[X](s) \cdot \Mcal[Y](s)
        \eqfinv
        s \in \RR 
        \eqfinp
    \end{equation*}
\subsection{Statistical setting: estimation procedure}\label{s:model}
Consider the multiplicative compound process
\begin{equation}
    Y_t = \prod_{i = 1}^{N_t}X_i
    \eqfinv
    \label{multCompound}
\end{equation}
where $(N_t)_{t\geq0}$ is a Poisson process with constant intensity 
$\lambda > 0$, independent of the $\iid$ random variables $(X_j)_{j \in \NN}$ with common density $f \in L^1([0, \infty)) \cap L^2([0, \infty))$.\\ 

We suppose that we observe one trajectory of $(Y_t)$ over $[0,T]$ at equidistant time $\Delta, 2 \Delta, \ldots, n\Delta$, $T = n\Delta$.  
We denote $Z_{k\Delta} = \frac{Y_{k\Delta}}{Y_{{(k-1)}\Delta}}$ the $k-th$ increment of $Y$. \\

Let $c \in \Xi_X \cap \Xi_\Delta$. We have $1 \in \Xi_X \cap \Xi_\Delta$. Let
$\Mcal_1[f]$ be the Mellin transform of $X_1$ and $\Mcal_1[\Delta]$ be the Mellin transform of $Z_\Delta$. 

\begin{lemma}
\label{L:kintchine}
For any $s \in \CC$, \; $
    \Mcal_1[\Delta](s) = \exp\big(\Delta \lambda(\Mcal_1[f](s)-1)\big)
    \eqfinp$
\end{lemma}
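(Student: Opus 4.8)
The plan is to compute the Mellin transform of the increment $Z_\Delta = Y_\Delta / Y_0$ directly from the multiplicative compound structure, reducing it to a characteristic-function-style computation for the Poisson process. First I would observe that, because $(Y_t)$ has stationary and independent increments on the multiplicative scale, $Z_\Delta$ has the same law as $Y_\Delta$ under $Y_0 = 1$, namely $Z_\Delta \overset{d}{=} \prod_{i=1}^{N_\Delta} X_i$, where $N_\Delta \sim \mathrm{Poisson}(\lambda \Delta)$ is independent of the \iid\ sequence $(X_i)$ with density $f$. Hence for $s \in \CC$ in the relevant fundamental strip,
\begin{equation*}
    \Mcal_1[\Delta](s) = \EE\big[Z_\Delta^{\,s-1}\big] = \EE\Big[\prod_{i=1}^{N_\Delta} X_i^{\,s-1}\Big]
    \eqfinp
\end{equation*}

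Next I would condition on $N_\Delta$ and use the independence of the $X_i$ together with the multiplicativity of the Mellin transform under products (the identity $\Mcal[X\cdot Y](s) = \Mcal[X](s)\Mcal[Y](s)$ stated just before Lemma~\ref{L:kintchine}, iterated): on $\{N_\Delta = k\}$,
\begin{equation*}
    \EE\Big[\prod_{i=1}^{k} X_i^{\,s-1}\Big] = \big(\EE[X_1^{\,s-1}]\big)^k = \big(\Mcal_1[f](s)\big)^k
    \eqfinp
\end{equation*}
Summing against the Poisson weights $\PP(N_\Delta = k) = e^{-\lambda\Delta}(\lambda\Delta)^k / k!$ then gives
\begin{equation*}
    \Mcal_1[\Delta](s) = \sum_{k=0}^\infty e^{-\lambda\Delta}\frac{(\lambda\Delta)^k}{k!}\big(\Mcal_1[f](s)\big)^k
    = e^{-\lambda\Delta}\exp\big(\lambda\Delta\,\Mcal_1[f](s)\big)
    = \exp\big(\Delta\lambda(\Mcal_1[f](s) - 1)\big)
    \eqfinv
\end{equation*}
which is exactly the claimed formula (this is the multiplicative analogue of the classical characteristic-function formula $\varphi_{Z_t}(u) = e^{-\lambda t + \lambda t \varphi_X(u)}$ quoted in \eqref{eq:fctcar_un_canal_bruit}, obtained here via the substitution $e^{iuX} \leftrightarrow X^{s-1}$).

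The main technical obstacle is not the algebra but the justification of convergence and of the interchange of expectation with the infinite sum: one must check that $s$ lies in a strip where $\Mcal_1[f](s)$ is finite — the hypothesis $c \in \Xi_X \cap \Xi_\Delta$ and the observation $1 \in \Xi_X \cap \Xi_\Delta$ are exactly what guarantees this — and that $|\Mcal_1[f](s)| \le \Mcal_1[f](\mathrm{Re}\,s) < \infty$, so the series $\sum_k (\lambda\Delta)^k |\Mcal_1[f](s)|^k / k!$ converges absolutely, allowing Fubini/Tonelli to move the expectation inside. I would also remark that the identity then propagates to all $s$ in the common fundamental strip by the analyticity/continuation remark recorded after \eqref{def:mellin}. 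With those integrability points in place the proof is a short conditioning argument.
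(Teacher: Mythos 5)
Your conditioning argument is correct: identifying $Z_\Delta$ in law with $\prod_{i=1}^{N_\Delta}X_i$, computing $\EE[Z_\Delta^{s-1}]$ by conditioning on $N_\Delta$, and summing the Poisson series is exactly the standard Lévy--Khintchine-type computation the lemma's name alludes to, and your remarks on absolute convergence via $|\Mcal_1[f](s)|\le \Mcal_1[f](\mathrm{Re}\,s)$ on the fundamental strip are the right (and only) technical points to check. The paper in fact states Lemma~\ref{L:kintchine} without proof, so there is nothing to diverge from; your write-up supplies the intended argument, and even sharpens the statement by restricting ``any $s\in\CC$'' to the strip where the Mellin transforms are defined.
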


From Lemma~\ref{L:kintchine}, we have
    \begin{equation*}
        \Mcal_1[f](s) = 1 + \frac{1}{\lambda \Delta} \log \Mcal_1[\Delta](s)
        \eqfinv
    \end{equation*}
where $ \log \Mcal_1[\Delta]$ is the distinguished logarithm of $\Mcal_1[\Delta]$ that we define in Appendix~\ref{S:DistingLog}. 

This last equation leads us to define the estimator
    \begin{equation*}
        \hat{\Mcal_1[f]}(s) =  1 + \frac{1}{\lambda \Delta} \hat{\log \Mcal_1[\Delta]}(s)
        \eqsepv
        s \in \RR
        \eqfinp
    \end{equation*}
where
    \begin{equation*}
        \hat{\Mcal_1[\Delta]}(t) = \frac{1}{n} \sum_{k = 1}^{n} Z_{k\Delta}^{c - 1 + it}
        \eqfinv  \quad
        \hat{\Mcal'_c[\Delta]}(t) = \frac{1}{n} \sum_{k = 1}^{n} i \log(Z_{k\Delta}) Z_{k\Delta}^{c - 1 + it} 
        \eqfinv
    \end{equation*}
    and 
    \begin{equation*}
        \hat{\log \Mcal_1[\Delta]}(s) = \int_0^u \frac{\hat{\Mcal'_c[\Delta]}(t)}{ \hat{\Mcal_1[\Delta]}(t)} \diff s
        \eqfinp
    \end{equation*}
However, the quantity $\hat{\log \Mcal_1[\Delta]}(s)$ may explode. Then we define 
    \begin{equation*}
        \tilde{\Mcal_1[f]}(s) =  \hat{\Mcal_1[f]}(s) \cdot \ind{| \hat{\Mcal_1[f]}(s)| \leq 4}
        \eqsepv
        s \in \RR  \
        \eqfinp
    \end{equation*}

Finally, we apply a Mellin inversion
    \begin{equation}
    \hat{f_{m, \Delta}}(x) = \frac{1}{2\pi} \int_{-m}^m x^{-c - it} \tilde{\Mcal_1[f]}(s) \diff t, 
    x \in (0, \infty)
    \eqfinv
    \end{equation}

\subsection{Risk bounds}
\label{sec:risk_mellin}
\begin{theorem} 
\label{T:risk}
Assume that $\EE[X_1^2] < \infty$, $\delta \leq \frac{1}{4} \log(n\Delta)$ and $n\Delta \to \infty$ as $J \to \infty$. Then, for any $m \geq 0$, it holds
\begin{equation}
    \EE \big[ \|\hat{f_{m, \Delta}}  - f \|_{\omega_1}^2\big] \leq \|f_m  - f \|_{\omega_1}^2 
    +
    \frac{1}{2\pi n \Delta^2} \int_{-m}^{m}\frac{1}{|\Mcal_1[\Delta](s)|^2}\diff s 
    + \frac{25}{\pi} \Big(\frac{\EE[\ln(X_1)^2]}{\Delta n} + 4 \frac{m}{(n \Delta)^{2}} \Big)
    \eqfinp
\end{equation}
\end{theorem}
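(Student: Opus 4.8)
The plan is to follow the bias–variance decomposition that is now standard for this family of deconvolution/decompounding estimators, adapted to the Mellin setting. First I would write, using Parseval's identity for the Mellin transform (Proposition \ref{P:melParseval}) applied to the weight $\omega_1$,
\[
\EE\big[\|\hat f_{m,\Delta}-f\|_{\omega_1}^2\big]
= \|f_m-f\|_{\omega_1}^2 + \frac{1}{2\pi}\int_{-m}^m \EE\big[\,|\tilde{\Mcal_1[f]}(s) - \Mcal_1[f](s)|^2\,\big]\,\diff s ,
\]
since the truncation at level $m$ separates the high-frequency (bias) part, which equals $\|f_m-f\|_{\omega_1}^2$ exactly, from the low-frequency (variance) part. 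So everything reduces to controlling $\EE[\,|\tilde{\Mcal_1[f]}(s)-\Mcal_1[f](s)|^2\,]$ uniformly for $|s|\le m$.

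Next I would use the identity $\Mcal_1[f](s) = 1 + \frac{1}{\lambda\Delta}\log\Mcal_1[\Delta](s)$ coming from Lemma \ref{L:kintchine}, together with the definition of $\tilde{\Mcal_1[f]}$ via the clipped logarithm, to bound the pointwise error by $\frac{1}{(\lambda\Delta)^2}$ times the squared error of $\hat{\log\Mcal_1[\Delta]}(s)$ against $\log\Mcal_1[\Delta](s)$, plus a term accounting for the truncation event $\{|\hat{\Mcal_1[f]}(s)|>4\}$. The distinguished-logarithm error is handled exactly as in the additive case (the machinery behind Theorem \ref{thm:analysis_error}): write $\hat{\log\Mcal_1[\Delta]}(s)-\log\Mcal_1[\Delta](s)$ as an integral of $\frac{\hat{\Mcal'_c[\Delta]}}{\hat{\Mcal_1[\Delta]}}-\frac{\Mcal'_c[\Delta]}{\Mcal_1[\Delta]}$, split it as $\frac{\hat{\Mcal'}-\Mcal'}{\hat{\Mcal}} + \Mcal'\big(\frac{1}{\hat{\Mcal}}-\frac{1}{\Mcal}\big)$, and control each empirical-process deviation by its variance. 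The empirical Mellin estimator $\hat{\Mcal_1[\Delta]}(t)=\frac1n\sum_k Z_{k\Delta}^{c-1+it}$ is an average of $n$ i.i.d.\ (for non-overlapping increments) bounded-modulus terms, so $\EE|\hat{\Mcal_1[\Delta]}(t)-\Mcal_1[\Delta](t)|^2 \le \frac1n$; similarly the derivative estimator brings in a factor $\EE[\ln(Z_\Delta)^2] = \lambda\Delta\,\EE[\ln(X_1)^2]$ by Wald-type identities for the compound process. Dividing by $|\Mcal_1[\Delta](s)|^2$ produces the $\frac{1}{2\pi n\Delta^2}\int_{-m}^m |\Mcal_1[\Delta](s)|^{-2}\diff s$ term (with $\lambda=1$), the derivative contributes the $\frac{\EE[\ln(X_1)^2]}{\Delta n}$ term, and the truncation/clipping remainder — controlled on the event where $|\hat{\Mcal_1[f]}|>4$ while $|\Mcal_1[f]|\le 1$, hence where the empirical fluctuation is at least of constant order — contributes the $\frac{m}{(n\Delta)^2}$ term. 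The constants $25/\pi$ and the $4$ are book-keeping from bounding $|\tilde{\Mcal_1[f]}|\le 4$ and $|\Mcal_1[f]|\le 1$ so that $|\tilde{\Mcal_1[f]}-\Mcal_1[f]|\le 5$.

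The main obstacle, as in the additive model, is making rigorous the control of the distinguished logarithm: one must ensure that with high probability $\hat{\Mcal_1[\Delta]}(t)$ stays bounded away from $0$ on $[-m,m]$ so that $\hat{\log\Mcal_1[\Delta]}$ is well defined and close to $\log\Mcal_1[\Delta]$, and one must handle the low-probability event where this fails without losing the rate — this is precisely where the hypotheses $\delta\le\frac14\log(n\Delta)$ and $n\Delta\to\infty$ enter (they guarantee $|\Mcal_1[\Delta](t)| = |\exp(\Delta\lambda(\Mcal_1[f](t)-1))|$ is not too small relative to $\sqrt{n}$, mirroring condition (1) of Theorem \ref{thm:analysis_error}). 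I would reuse the distinguished-logarithm lemmas from Appendix \ref{S:DistingLog} and the auxiliary lemmas in Appendix \ref{s:alem} to close this gap, keeping the argument parallel to the proof of Theorem \ref{thm:analysis_error} but one channel simpler since here there is a single process observed at rate $\Delta$ rather than $J$ channels at two times.
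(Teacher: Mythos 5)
Your skeleton (bias--variance split via the Mellin isometry, reduction to $\int_{-m}^m \EE\,|\tilde{\Mcal_1[f]}(s)-\Mcal_1[f](s)|^2\,\diff s$, a good event on which the clipped estimator coincides with the raw one, a low-probability bad event, and the observation that $\Delta\le\frac14\log(n\Delta)$ keeps $|\Mcal_1[\Delta]|\ge e^{-2\Delta}$ above the critical threshold) matches the paper's proof, which is indeed run in parallel with Theorem \ref{thm:analysis_error}. However, the mechanism you describe for the variance term is not the one used and, taken literally, would not deliver the stated bound. You propose to control $\hat{\log\Mcal_1[\Delta]}(s)-\log\Mcal_1[\Delta](s)$ through its integral representation by splitting $\frac{\hat{\Mcal'}}{\hat{\Mcal}}-\frac{\Mcal'}{\Mcal}$ and bounding each piece by its variance; but the error in that representation accumulates along the whole integration path $[0,s]$, so a term-by-term variance bound produces an extra factor growing with $|s|$ (and requires a lower bound on $|\hat{\Mcal_1[\Delta]}|$ anyway), and it would not reduce to the clean $\frac{1}{2\pi n\Delta^2}\int_{-m}^m|\Mcal_1[\Delta](s)|^{-2}\diff s$. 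The paper avoids this entirely: on $\{|s|\le m\wedge M_{n,\Delta}^{(\gamma_\Delta)}\}\cap\Omega_{\zeta,\Delta}(m)$ it first shows $|\hat{\Mcal_1[f]}(s)|\le 4$, hence $\tilde{\Mcal_1[f]}=\hat{\Mcal_1[f]}$, and then invokes the Duval--Kappus comparison lemma (\ref{lm:Mellin2}) to bound the log-difference pointwise by $\frac{\gamma}{\zeta}\log\bigl(\frac{\gamma}{\gamma-\zeta}\bigr)\,|\hat{\Mcal_1[\Delta]}(s)-\Mcal_1[\Delta](s)|/|\Mcal_1[\Delta](s)|$; only the elementary variance bound $\EE|\hat{\Mcal_1[\Delta]}(s)-\Mcal_1[\Delta](s)|^2\le \tfrac1n$ is needed, and the derivative estimator $\hat{\Mcal'_c[\Delta]}$ never has to be controlled at all.

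The second misattribution concerns the remainder $\frac{25}{\pi}\bigl(\frac{\EE[\ln(X_1)^2]}{\Delta n}+4\frac{m}{(n\Delta)^2}\bigr)$. In the paper neither piece comes from the derivative estimator's variance nor from a pointwise clipping event: both are exactly $25\times\PP\bigl(\Omega_{\zeta,\Delta}(m)^c\bigr)$ (times the length $2m$ of the integration range, with $|\tilde{\Mcal_1[f]}-\Mcal_1[f]|\le 5$ as you correctly note), and $\PP(\Omega_{\zeta,\Delta}(m)^c)\le \frac{\EE[\ln(X_1)^2]}{\Delta n}+4\frac{m}{(n\Delta)^{2}}$ is the content of Lemma \ref{lm:Mellin1} (a grid of mesh $h$, Hoeffding's inequality at the grid points — whence the $m$ factor and the power $(n\Delta)^{-\eta}$ with $\eta=2$, $\zeta>\sqrt{5\Delta}$ — plus a Lipschitz/Markov argument on $\frac1n\sum_k|\ln Z_{k\Delta}|$, which is where $\EE[\ln(X_1)^2]$ enters). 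Since you do say you would fall back on the appendix lemmas and mirror the proof of Theorem \ref{thm:analysis_error}, the gap is repairable, but as written the accounting of where the three terms of the bound come from is wrong, and the ratio-splitting step would have to be abandoned in favour of Lemma \ref{lm:Mellin2} for the argument to close.
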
 
We prove this result in Appendix~\ref{S:ProofMain}. In the demonstration, we need \cref{lm:Mellin1} and \cref{lm:Mellin2}. 

\subsection{The adaptative procedure}
\label{sec:mellin_adaptive}
We want to minimize the right side term in Theorem~\ref{T:adapt}. The optimal cutoff $\bar{m_n}$ is defined by 
\begin{equation*}
    \bar{m_n} \in \argmin_{m > 0} \Big(\|f_m  - f \|_{\omega_1}^2 
    +
    \frac{1}{2\pi n \Delta^2} \int_{-m}^{m}\frac{1}{|\Mcal_1[\Delta](s)|^2}\diff s  \\
    + \frac{25}{\pi} \Big(\frac{\EE[\ln(X_1)^2]}{\Delta n} + 4 \frac{m}{(n \Delta)^{2}} \Big) \Big)
    \eqfinp
\end{equation*}

In the variance term, the leading term is in $\frac{m e^{4\Delta}}{n \Delta}$. Therefore, the optimal cutoff is such that
    \begin{equation*}
        |\Mcal_1[f](\bar{m_n})|^2 = \frac{e^{4\Delta}}{n \Delta}
        \eqfinp
    \end{equation*}

We can define the empirical optimal cutoff by 
    \begin{equation*}
        |\bar{\Mcal_1[f]}(\bar{m_n})|^2 = \frac{e^{4\Delta}}{n \Delta}
        \eqfinv
    \end{equation*}
where 
\begin{equation*}
    \bar{\Mcal_1[f]}(s) = \tilde{\Mcal_1[f]}(s) \ind{|\tilde{\Mcal_1[f]}(s)| \geq \kappa_{n, \Delta} / \sqrt{n \Delta}}
    \eqfinv
\end{equation*}
with 
\begin{equation*}
    \kappa_{n, \Delta} = (e^{2\Delta + \kappa}\sqrt{\log(n \Delta)}), \kappa > 0
    \eqfinp
\end{equation*}

We consider the empirical cutoff
\begin{equation*}
    \hat{m_n} = \max \big\{ m \geq 0:  |\bar{\Mcal_1[f]}(m)| = \kappa_{n, \Delta} / \sqrt{n \Delta} \big\} \wedge (n \Delta)^\alpha
    \eqfinp
\end{equation*}

\begin{theorem}[Adaptative method]
\label{T:adapt}
Assume that $\EE[X_1^2] < \infty$, $\delta \leq \frac{1}{4} \log(n\Delta)$ and $J\Delta \to \infty$ as $n \to \infty$. Then, 
there exist two positive constants $A$ and $B$ depending on $\kappa$ and $\EE[\ln(X_1)^2]$ such that
{\begin{multline*}
    \EE \big[ \|\bar{f_{\hat{m_n}, \Delta}}  - f \|_{\omega_1}^2\big] \\
    \leq \inf_{m\in [0,(n \Delta)^\alpha]}A \Big(\| f_m - f \|_{\omega_1}^2 + \frac{\log(n\Delta)m}{n\Delta}
    + \frac{1}{n \Delta^2} \int_{-m}^{m} \limits\frac{1}{|\Mcal_1[\Delta](s)|^2}\diff s 
    + 4 \frac{m^2}{(n \Delta)^{2}} \Big)
    \\
    + B\Big((n\Delta)^{\alpha - c(\Delta)^2} + \frac{1}{n\Delta}\Big)
    \eqfinp
\end{multline*}}
\end{theorem}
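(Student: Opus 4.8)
The plan is to follow the template of Duval and Kappus \cite{duval2019adaptive}, running in parallel with the proof of \cref{thm:oracle}: the only structural changes are that the Fourier transform is replaced by the Mellin transform and that Parseval's identity is replaced by the Mellin version stated in \cref{P:melParseval}. First I would rewrite everything on the Mellin side: for any cutoff $k\ge 0$, $\|\bar f_{k,\Delta}-f\|_{\omega_1}^2$ equals, up to the factor $(2\pi)^{-1}$, the squared $L^2(\RR)$-distance between $s\mapsto \bar{\Mcal_1[f]}(s)\,\mathbf{1}_{\{|s|\le k\}}$ and $s\mapsto \Mcal_1[f](s)$. In particular $\|f_m-f\|_{\omega_1}^2=(2\pi)^{-1}\int_{|s|>m}|\Mcal_1[f](s)|^2\,\diff s$ is nonincreasing in $m$, while $\int_{-m}^m \frac{1}{|\Mcal_1[\Delta](s)|^2}\,\diff s$ and $\frac{m^2}{(n\Delta)^2}$ are nondecreasing in $m$. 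The whole proof then splits $\EE[\|\bar f_{\hat m_n,\Delta}-f\|_{\omega_1}^2]$ according to a ``good'' event $\Omega_n$, on which the empirical Mellin objects $\hat{\Mcal_1[\Delta]}$, $\hat{\Mcal'_c[\Delta]}$ and the associated $\hat{\log \Mcal_1[\Delta]}$ are uniformly close to their population counterparts on the band $\{|s|\le(n\Delta)^\alpha\}$, and its complement $\Omega_n^c$.

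On $\Omega_n$ the analysis is deterministic. Since $\hat m_n$ is produced by a thresholding rule --- keep the frequencies where $|\bar{\Mcal_1[f]}(s)|$ exceeds $\kappa_{n,\Delta}/\sqrt{n\Delta}$, truncated at $(n\Delta)^\alpha$ --- uniform closeness of $\bar{\Mcal_1[f]}$ to $\Mcal_1[f]$ forces $\hat m_n$, on $\Omega_n$, to lie between two deterministic cutoffs defined from $|\Mcal_1[f]|$ with slightly inflated and slightly deflated thresholds, both of the same order as the oracle $\bar m_n$. Combining this two-sided bracketing with \cref{T:risk} (now applicable at the deterministically bracketed value $\hat m_n$) and the monotonicities above, one bounds $\|\bar f_{\hat m_n,\Delta}-f\|_{\omega_1}^2\,\mathbf{1}_{\Omega_n}$ by a constant multiple of $\inf_{m\in[0,(n\Delta)^\alpha]}\big(\|f_m-f\|_{\omega_1}^2 + \frac{\log(n\Delta)m}{n\Delta} + \frac{1}{n\Delta^2}\int_{-m}^m \frac{1}{|\Mcal_1[\Delta](s)|^2}\,\diff s + 4\frac{m^2}{(n\Delta)^2}\big)$, the extra $\log(n\Delta)$ factor being precisely the contribution of the calibration constant $\kappa_{n,\Delta}$. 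This yields the term $A(\cdots)$ of the statement, the bias part $\|f_m-f\|_{\omega_1}^2$ being untouched by the stochastic argument.

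On $\Omega_n^c$ I would bound the estimator crudely. By construction $|\bar{\Mcal_1[f]}(s)|\le|\tilde{\Mcal_1[f]}(s)|\le 4$ and $\hat m_n\le(n\Delta)^\alpha$, so \cref{P:melParseval} gives $\|\bar f_{\hat m_n,\Delta}\|_{\omega_1}^2\le \frac{16\,\hat m_n}{\pi}\lesssim(n\Delta)^\alpha$, whence $\EE\big[\|\bar f_{\hat m_n,\Delta}-f\|_{\omega_1}^2\,\mathbf{1}_{\Omega_n^c}\big]\lesssim\big((n\Delta)^\alpha+\|f\|_{\omega_1}^2\big)\,\PP(\Omega_n^c)$. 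It then remains to prove $\PP(\Omega_n^c)\lesssim(n\Delta)^{-c(\Delta)^2}+(n\Delta)^{-1-\alpha}$, which is a Bernstein-type concentration statement for the empirical Mellin transforms: their summands, of the form $Z_{k\Delta}^{\,c-1+is}$ and $\log(Z_{k\Delta})\,Z_{k\Delta}^{\,c-1+is}$, have finite variance under the standing moment assumptions (finiteness of $\EE[X_1^2]$ and $\EE[\ln(X_1)^2]$), and the uniform deviation over the band, whose length is polynomial in $n\Delta$, is treated by a standard covering argument; the assumption $\delta\le\frac14\log(n\Delta)$ keeps $|\Mcal_1[\Delta](s)|$ bounded away from $0$ on that band, so that the population quantity never reaches the truncation level $4$ and the distinguished logarithm is well defined there. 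Multiplying the crude bound by $\PP(\Omega_n^c)$ produces the remainder $B\big((n\Delta)^{\alpha-c(\Delta)^2}+(n\Delta)^{-1}\big)$ with $B$ depending only on $\kappa$ and $\EE[\ln(X_1)^2]$; taking $\kappa$ large makes $c(\Delta)^2>\alpha$, so this term is genuinely negligible.

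The main obstacle, exactly as in \cref{thm:oracle}, is passing from a deviation bound for $\hat{\Mcal_1[\Delta]}$ to one for its distinguished logarithm $\hat{\log \Mcal_1[\Delta]}(s)=\int_0^u \hat{\Mcal'_c[\Delta]}(t)/\hat{\Mcal_1[\Delta]}(t)\,\diff t$, a nonlinear functional of the empirical transform: a single Bernstein inequality is not enough, and one has to integrate a ratio-deviation estimate along $[0,u]$ while propagating the lower bound on $|\hat{\Mcal_1[\Delta]}|$. This is exactly what \cref{lm:Mellin1} and \cref{lm:Mellin2} are designed to provide: they give the uniform control of $\sup_{|s|\le(n\Delta)^\alpha}|\hat{\log \Mcal_1[\Delta]}(s)-\log \Mcal_1[\Delta](s)|$, hence of $\bar{\Mcal_1[f]}-\Mcal_1[f]$, that makes the definition of $\Omega_n$ and the squeezing of $\hat m_n$ operational. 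With those two lemmas in hand, assembling the good-event and bad-event estimates is routine bookkeeping.
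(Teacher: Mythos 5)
Your sketch shares the paper's general spirit (a threshold-calibrated cutoff analysed via \cref{lm:Mellin1} and \cref{lm:Mellin2}, following Duval--Kappus), but the two steps where you depart from the paper are exactly where the argument breaks. The first gap is the ``bracketing'' step: you assert that on the good event $\Omega_n$ the uniform closeness of $\bar{\Mcal_1[f]}$ to $\Mcal_1[f]$ squeezes $\hat{m_n}$ between two deterministic cutoffs ``of the same order as the oracle'', and that the oracle inequality over all $m\in[0,(n\Delta)^\alpha]$ then follows from \cref{T:risk} and monotonicity. Nothing in the assumptions supports this: inflating or deflating the threshold by a constant can move the crossing point of $|\Mcal_1[f]|$ by an uncontrolled amount (the Mellin transform may hover near the threshold level on a long stretch, or dip and come back), so the bias at the deflated bracket and the variance integral at the inflated bracket need not be comparable to the infimum on the right-hand side. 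The paper never compares $\hat{m_n}$ to an oracle bandwidth at all: it fixes an arbitrary $m$, splits on $\Ecal=\{\hat{m_n}<m\}$ and $\Ecal^c$, controls the bias surplus on $\Ecal$ directly from the thresholding rule (on $[\hat{m_n},m]$ the retained coefficients are below $\kappa_{n,\Delta}/\sqrt{n\Delta}$, which is what produces the $\log(n\Delta)\,m/(n\Delta)$ term) together with the variance bound of \cref{T:risk}, controls the variance surplus on $\Ecal^c$ by a pointwise split on whether $|\Mcal_1[f](u)|^2$ exceeds $e^{2\Delta}/\sqrt{n\Delta}$ (large-signal frequencies are absorbed into the bias, small-signal frequencies are turned into a threshold-crossing probability handled by Hoeffding plus \cref{lm:Mellin1}), and only takes the infimum over $m$ at the end. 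If you want to keep your bracketing route you must supply the missing comparison argument, which is not routine.

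The second gap is the bad-event bound. You bound the risk on $\Omega_n^c$ by $\bigl((n\Delta)^\alpha+\|f\|_{\omega_1}^2\bigr)\PP(\Omega_n^c)$ and claim $\PP(\Omega_n^c)\lesssim (n\Delta)^{-c(\Delta)^2}+(n\Delta)^{-1-\alpha}$ via ``Bernstein plus a covering argument''. Under the standing assumptions this is not available: the increments $\log(Z_{k\Delta})Z_{k\Delta}^{c-1+is}$ are unbounded and only $\EE[\ln(X_1)^2]<\infty$ is assumed, so the grid argument of \cref{lm:Mellin1} controls the Lipschitz constant by Chebyshev and yields only $\PP(\Omega_{\zeta,\Delta}^c)\lesssim \EE[\ln(X_1)^2]/(n\Delta)+m(n\Delta)^{-\eta}$, i.e.\ a polynomial $1/(n\Delta)$, not $(n\Delta)^{-1-\alpha}$; the exponent $c(\Delta)^2$ in the statement comes from a Hoeffding bound for a deviation at the specific threshold scale $\kappa\Delta|\Mcal_1[\Delta]|\sqrt{\log(n\Delta)/(n\Delta)}$ inside the crossing analysis, not from a generic uniform-closeness event. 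Consequently your crude ``sup-norm times probability'' estimate produces a remainder of order $(n\Delta)^{\alpha-1}$ instead of the claimed $B/(n\Delta)$. The paper avoids multiplying the full band length by an $(n\Delta)^\alpha$-sized factor: on the small-signal set the deviation factor is kept bounded by a constant coming from the truncation $|\tilde{\Mcal_1[f]}|\le 4$, and the dominant term is the integrated Hoeffding bound $(n\Delta)^{\alpha-c(\Delta)^2}$, with the moment term entering only through \cref{lm:Mellin1}. As written, your proposal therefore does not establish the stated inequality.
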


\section{Numerical experiments}
\label{S:simu}

\subsection{Decompounding with noise}

\subsubsection*{Implementation details}

For a given estimator $\hat f_{m, J}$ of $f$, we numerically evaluate the efficiency of $\hat f_{m, J}$. More specifically, we compute the $L^2$-risks $\espe \big( \norm{\hat f_{m, J} - f}^2 \big)$ by averaging the result obtained for 1000 independent simulations. \\

We test our method for three different jump distribution densities $f$: 
    \begin{itemize}
        \item the mixture density $0.3 \Ncal(-3.5, 1) + 0.7\Ncal(3.5, 1)$. 
        \item the Gamma density $\Gamma(2)$. 
        \item the Cauchy density $\Ccal(0, 1)$.
    \end{itemize}
and the noise density $\Ncal(0, 1)$.

\subsubsection*{Efficiency of the estimation} The estimator is expected to be progressively more accurate as J increases. We compute the estimator for different values of $J$ for mixed density (Figure \ref{fig:mixed}) and Gamma density (Figure \ref{fig:gamma}).
\begin{figure}
    \centering
    \includegraphics[scale=0.4]{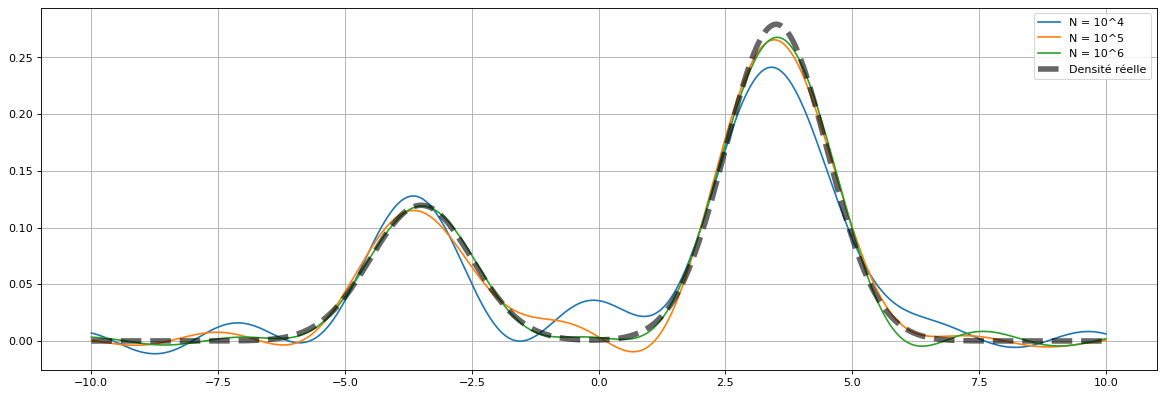}
    \caption{Illustration of the estimator. \\
    \textit{Input: Jump distribution $X \sim 0.3 \Ncal(-3.5, 1) + 0.7\Ncal(3.5, 1)$. Observations are corrupted by a Gaussian noise $\Ncal(0, 1)$. $t_1 = 0.5, t_2 = 1, J=10^5.$}}
    \label{fig:mixed}
\end{figure}
\begin{figure}
    \centering
    \includegraphics[scale=0.4]{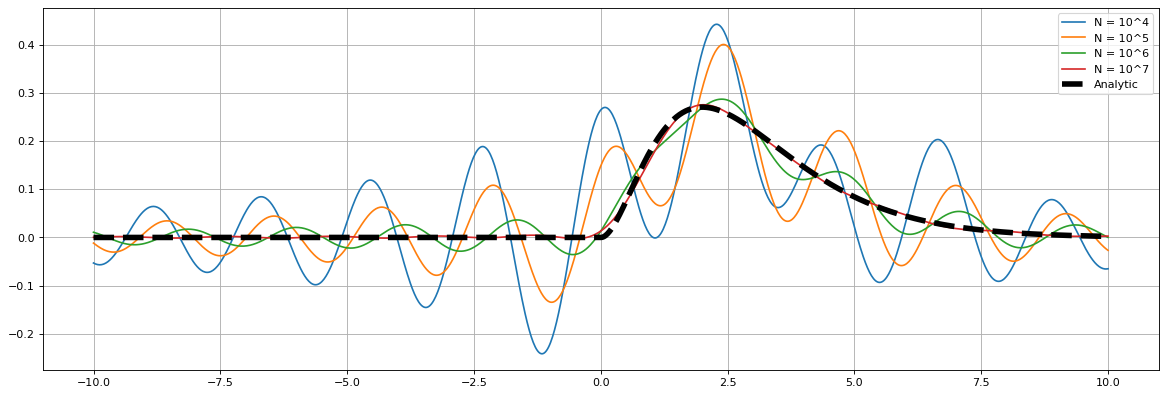}
    \caption{Illustration of the estimator. \\
    \textit{Input: Jump distribution $X \sim \Gamma(3)$. Observations are corrupted by a Gaussian noise $\Ncal(0, 1)$. $t_1 = 0.5, t_2 = 1, J=10^5.$}}
    \label{fig:gamma}
\end{figure}

\subsubsection*{Dependence of the $L^2$-risk to the cut-off $m$ and efficiency of the adaptive procedure}
\label{S:simuM}

The value of $m$ has a strong impact on the quality of the estimator $\hat f_{m, J}$. When $m$ is too low or too high, we expect that the estimator should deteriorate. The adaptive procedure gives an automatic way to select a "good" $m$ based on observations. \\

We illustrate the efficiency of the adaptive procedure for the mixture density (Figure \ref{fig:mopt_mixed}), Gamma density (Figure \ref{fig:mopt_gamma}) and the Cauchy density (Figure \ref{fig:mopt_cauchy}).

The results of the simulations are satisfactory, as we can see that the value of $m$ chosen seems to minimize our error.

\begin{figure}
    \centering
    \includegraphics[scale=0.6]{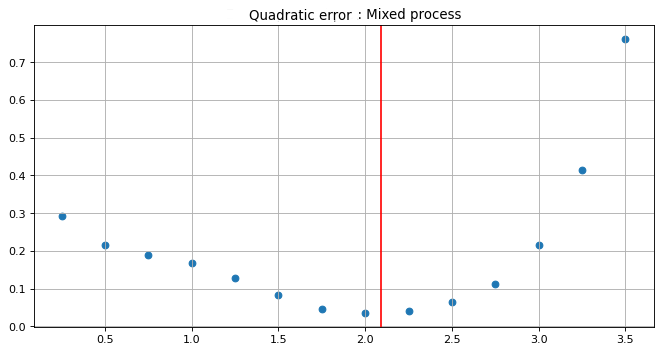}
    \caption{Computations of the $L^2$-risks $\espe \big( \norm{\hat f_{m, J} - f}^2 \big)$ (y axis) for different values of $m$ (x axis). The vertical line represents the value of $\hat m$ chosen automatically by the adaptive procedure. \\
    \textit{Input: Jump distribution $X \sim 0.3 \Ncal(-2, 1) + 0.7\Ncal(2, 1)$. Observations are corrupted by a Gaussian noise $\Ncal(0, 1)$. $t_1 = 0.5, t_2 = 1, J=10^5.$}}
    \label{fig:mopt_mixed}
\end{figure}
\begin{figure}
    \centering
    \includegraphics[scale=0.6]{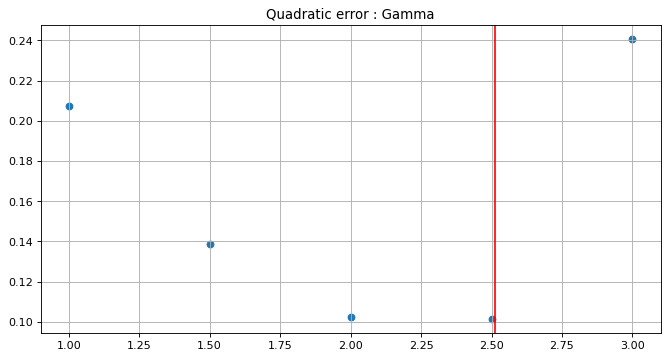}
    \caption{Computations of the $L^2$-risks $\espe \big( \norm{\hat f_{m, J} - f}^2 \big)$ (y axis) for different values of $m$ (x axis). The vertical line represents the value of $\hat m$ chosen automatically by the adaptive procedure. \\
    \textit{Input: Jump distribution $X \sim \Gamma(2)$. Observations are corrupted by a Gaussian noise $\Ncal(0, 1)$. $t_1 = 0.5, t_2 = 1, J=10^5.$}}
    \label{fig:mopt_gamma}
\end{figure}
\begin{figure}
    \centering
    \includegraphics[scale=0.6]{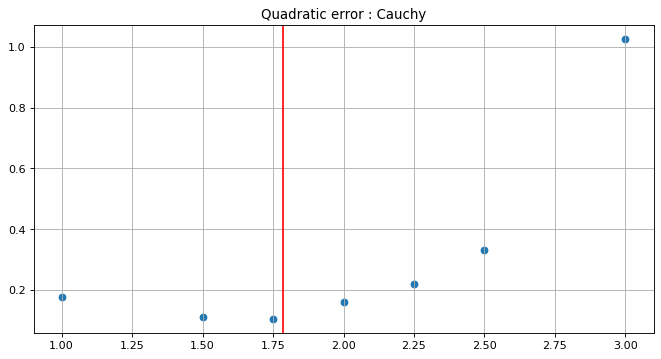}
    \caption{Computations of the $L^2$-risks $\espe \big( \norm{\hat f_{m, J} - f}^2 \big)$ (y axis) for different values of $m$ (x axis). The vertical line represents the value of $\hat m$ chosen automatically by the adaptive procedure. \\
    \textit{Input: Jump distribution $X \sim \Ccal(0, 1)$. Observations are corrupted by a Gaussian noise $\Ncal(0, 1)$. $t_1 = 0.5, t_2 = 1, J=10^5.$}}
    \label{fig:mopt_cauchy}
\end{figure}

\newpage
\subsubsection*{How to select an optimal time $t_2$}
\label{S:simuT}
To define our estimator, we have chosen two times $t_1$ and $t_2$ in an arbitrary way. Therefore, we would like to know how to select them in an optimal way.\\

Theorem \ref{thm:analysis_error} states that $t_2$ must be neither too large nor too close to $t_1$, which we can illustrate with numerical simulation. Figure \ref{fig:error_t2} shows the value of the $L^2$-risks $\espe \big( \norm{\hat f_{m, J} - f}^2 \big)$ of the estimator for different values of $t_2$.
Each of these values is the average of the results of 1000 different simulations. Here, it seems that the estimate is better when $t_2$ is between $1$ and $1.5$, which seems to correspond roughly to the intensity of the number of jumps.

\begin{figure}[h!]
    \centering
    \includegraphics[width=0.6\paperwidth]{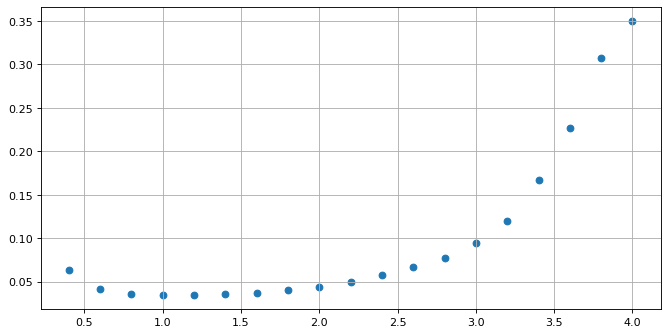}
    \caption{Computations of the $L^2$-risks $\espe \big( \norm{\hat f_{m, J} - f}^2 \big)$ (y axis) for different values of $t_2
    $ (x axis). \\
    \textit{Input: Jump distribution $X \sim 0.3 \Ncal(-2, 1) + 0.7\Ncal(2, 1)$. Observations are corrupted by a Gaussian noise $\Ncal(0, 1)$. $t_1 = 0.2$, $m = 2$, $J = 10^5$. }}
    \label{fig:error_t2}
\end{figure}
\subsection{Multiplicative decompounding}

In this section, we illustrate the efficiency of the Mellin estimator we built in \cref{sec:mellin_estimator} on a simulated sample of size $n = 5000$. 

Here, we assume that the jump distribution densities $f$ is the Beta law $\beta(200, 30)$. The bandwidth $m = 83.7$ was selected by the adaptive procedure described in \cref{sec:mellin_adaptive}. The resulting estimate is plotted in \cref{fig:mellin.beta5000}.

\begin{figure}[H]
    \centering
    \includegraphics[width=0.55\paperwidth]{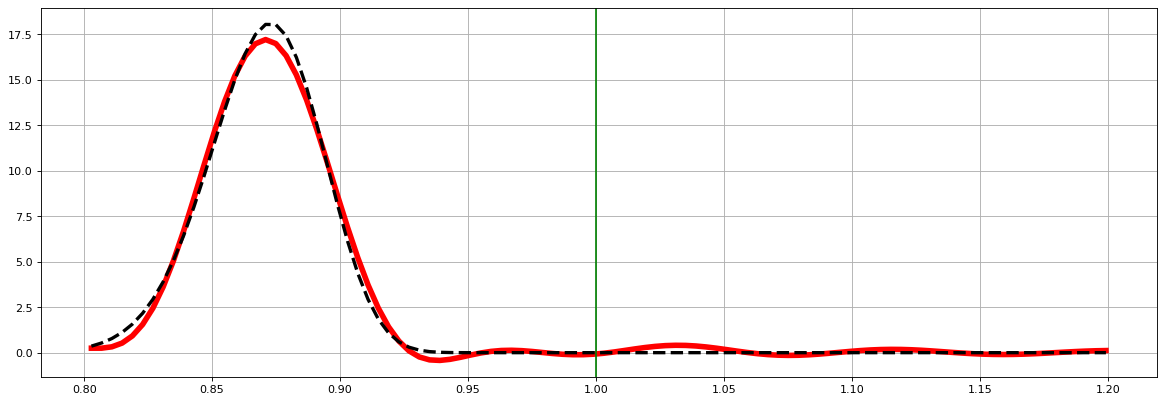}
    \includegraphics[width=0.55\paperwidth]{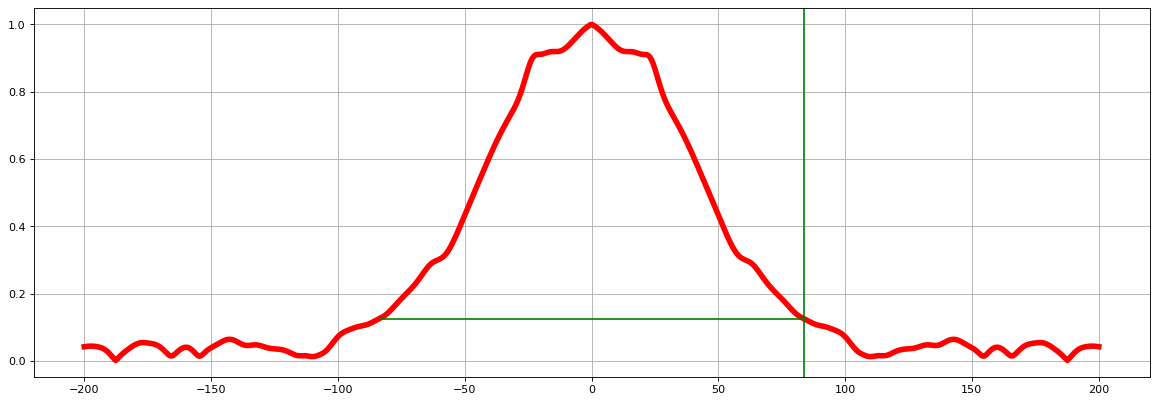}
    \caption{\textbf{Top: }Estimation of the Beta density $\beta(200, 30)$ for 5000 observations. The real density is the black dotted-line and the estimator is plotted in red. \textbf{Bottom: }Illustration of the adaptive procedure. The green lines show the cutoff on the Mellin transform.}\label{fig:mellin.beta5000}.
  \end{figure}

\section{Proofs}
\label{sec:proof}
\subsection{Proof of Theorem \ref{thm:analysis_error}}

This proof is an adaptation of the proof of Theorem 3.1 of Duval and Kappus \cite{duval2019adaptive}. Nevertheless, we have in addition a noise term that must be taken into account in our majorations.\\

From the triangle inequality and Parseval's equality we have 
    \begin{subequations}
        \begin{align*}
            \norm{\hat f_{m, J} - f}^2
            &\leq \norm{f_m - f}^2 + \norm{\hat f_{m, J} - f_m}^2 \\
            &= \norm{f_m - f}^2 + \frac{1}{2\pi} \int_{-m}^m | \tilde \varphi_X^J (u) - \varphi_X(u)|^2 \du
            \eqfinp
        \end{align*}
    \end{subequations}

By recalling that for any reals $(a, b) \in \RR^2$, $|a+b|^2 \leq 2 (a^2 + b^2)$, 
and by using Definition \ref{def:estimateur} and Definition \ref{def:estimateur_fctcar_tilde}, then we have
    \begin{subequations}
    \begin{align*}\l
         &\int_{-m}^m |\tilde \varphi_X^J  - \varphi_X(u)|^2 \du \notag{}\\
         &= \frac{1}{(t_2 - t_1)^2}\int\limits_{-m}^m 
        | \log \tilde \varphi_{Z_{t_2}}^{\, J}  (u) 
         - \log \tilde \varphi_{Z_{t_1}}^{\, J}  (u) 
         - \log\varphi_{Z_{t_2}} (u) + \log \varphi_{Z_{t_1}} (u)|^2 \du \\
         &\leq \frac{2}{(t_2 - t_1)^2}\int\limits_{-m}^m |\log \tilde \varphi_{Z_{t_1}}^{\, J} (u) - \log\varphi_{Z_{t_1}} (u)|^2 \du + \frac{2}{(t_2 - t_1)^2}\int\limits_{-m}^m | \log \tilde \varphi_{Z_{t_2}}^{\, J}  (u)  - \log\varphi_{Z_{t_2}} (u)|^2 \du 
\eqfinp
    \end{align*}
    \end{subequations}
    
In the rest of the proof we establish a majorization of the two terms on the right. \\

Let $\tau \in \{t_1, t_2\}$ and $I_{\tau, J, m}$ denote the quantity 
	\begin{equation*}
	I_{\tau, J, m} = \frac{1}{(t_2 - t_1)^2} \int\limits_{-m}^m | \log \tilde \varphi_{Z_{\tau}}^{\, J} (u) - \log\varphi_{Z_{\tau}} (u)|^2 \du
	\eqfinp
	\end{equation*}

Let $(\gamma, \zeta) \in (0, \infty)^2$ be two positive reals such that $\gamma c_m > \zeta$. \\
In particular,  we take $\gamma$ such that $\gamma c_m = \frac{2 \zeta}{1 \wedge (t_2 - t_1)} > \zeta$. \\

Consider the events 
    \begin{equation*}
        \Omega_{\zeta, \tau}(m) = \Bigg\{ 
            \forall u \in [-m, m], |\hat \varphi^J_{Z_{\tau}}(u) - \fctcar{Z_{\tau}}(u)|
            \leq
            \zeta \sqrt{\frac{\log(J \tau)}{J \tau}}
        \Bigg\}
    \end{equation*}
and
    \begin{equation*}
        M_{J, t}^{(\gamma c_m)} = \min \Big\{ u \geq 0: |\varphi_{Z_t}(u)| = \gamma c_m \sqrt{\log(Jt)/(Jt)} \Big\}
        \eqfinv
    \end{equation*}
as defined in Lemma \ref{lm:main1} and Lemma \ref{lm:main2}. \\
As the events
	\begin{equation*}
    \{ |u| \leq m \wedge M_{J, \tau}^{(\gamma c_m)} \} \cap \Omega_{\zeta, \tau}(m)
    \eqsepv
    \{ |u| \in [M_{J, \tau}^{(\gamma c_m)}, m] \} \cap \Omega_{\zeta, \tau}(m)
    \mtext{ and }
    \Omega_{\zeta, \tau}(m)^c
	\end{equation*}
form a partition of the sample set $\Omega$, we only need to control $I_{\tau, J, m}$ on each of them. This leads us to define the quantities 
	\begin{subequations}
	\begin{align*}
		I_{\tau, J, m}^{(1)} 
		&= \frac{1}{(t_2 - t_1)^2}
		\int\limits_{-m \wedge M_{J, \tau}^{(\gamma c_m)}}^{m \wedge M_{J, \tau}^{(\gamma c_m)}} \1{\Omega_{\zeta, \tau}(m)}
		|\log \tilde \varphi_{Z_{\tau}}^{\, J}  (u) - \log\varphi_{Z_{\tau}} (u)|^2 \du
		\eqfinv
	    \\
		I_{\tau, J, m}^{(2)}
		&= \frac{1}{(t_2 - t_1)^2}
		\1{m > M_{J, \tau}^{(\gamma c_m)}} \cdot \1{\Omega_{\zeta, \tau}(m)} \int_{|u| \in [M_{J, \tau}^{(\gamma c_m)}, m]} |\log \tilde \varphi_{Z_{\tau}}^{\, J}  (u) - \log\varphi_{Z_{\tau}} (u)|^2 \du 
		\eqfinv
		\\
		I_{\tau, J, m}^{(3)} 
		&= \frac{1}{(t_2 - t_1)^2}
		\1{\Omega_{\zeta, \tau}(m)^c}  \int\limits_{-m}^m | \log \tilde \varphi_{Z_{\tau}}^{\, J}  (u) - \log\varphi_{Z_{\tau}} (u)|^2 \du
		\eqfinp
	\end{align*}
	\end{subequations}

Obviously, we have the decomposition 
	\begin{equation*}
	I_{\tau, J, m} = I_{\tau, J, m}^{(1)} + I_{\tau, J, m}^{(2)}  + I_{\tau, J, m}^{(3)} 
	\eqfinp
	\end{equation*}
	
\paragraph{\textbf{Step 1} -- Control of $I_{\tau, J, m}^{(1)}$} Define the event $\Acal = \{ |u| \leq m \wedge M_{J, \tau}^{(\gamma c_m)} \}  \cap \Omega_{\zeta, \tau}(m)$. The triangle inequality ensures that
	\begin{equation*}
	| \log \hat \varphi_{Z_\tau}^{\, J}(u) | \leq | \log \hat \varphi_{Z_\tau}^{\, J}(u) - \log \varphi_{Z_\tau}(u) | + |\log \varphi_{Z_\tau}(u) |
	\eqfinp
	\end{equation*}
Then Lemma \ref{lm:main2} applied to $\zeta < \gamma c_m$ ensures that
	\begin{equation*}
	| \log \hat \varphi_{Z_\tau}^{\, J}(u) | \leq 
	        \frac{\gamma c_m}{\zeta} \log \Big( \frac{\gamma c_m}{\gamma c_m- \zeta}\Big)
        \frac{|\hat \varphi_{Z_\tau}^J(u) - \varphi_{Z_\tau}(u)|}{|\varphi_{Z_\tau}(u)|} + |\log \varphi_{Z_\tau}(u) |
	\eqfinp
	\end{equation*}
Then we have by definition of $\Omega_{\zeta, \tau}(m)$, 
		\begin{equation*}
	| \log \hat \varphi_{Z_\tau}^{\, J}(u) | \leq 
	        \frac{\gamma c_m}{\zeta} \log \Big( \frac{\gamma c_m}{\gamma c_m- \zeta}\Big)
        \frac{1}{|\varphi_{Z_\tau}(u)|} \zeta \sqrt{\frac{\log(J\tau)}{J \tau}}+ |\log \varphi_{Z_\tau}(u) |
	\eqfinp
	\end{equation*}
For $J\tau$ large enough, we have $\gamma c_m \sqrt{\frac{\log(J \tau)}{J \tau}} < 1$, hence on $A$
the definition of $M_{J, \tau}^{(\gamma c_m)}$ ensures that
	\begin{equation*}
	\frac{1}{|\varphi_{Z_\tau}(u)|} \leq \frac{1}{\gamma c_m \sqrt{\frac{\log(J\tau)}{J \tau}}}
	\eqfinp
	\end{equation*}
	
It follows that 
		\begin{subequations}
		\begin{align*}
			| \log \hat \varphi_{Z_\tau}^{\, J}(u) |
			&\leq
	        \frac{\gamma c_m}{\zeta}
	        \log \Big( \frac{\gamma c_m}{\gamma c_m- \zeta}\Big)
	        \frac{1}{\gamma c_m \sqrt{\frac{\log(J\tau)}{J \tau}}} 
	        \zeta \sqrt{\frac{\log(J\tau)}{J \tau}}
	        + 
	        |\log \varphi_{Z_\tau}(u) |
	        \eqfinv
	        \\
	        &\leq
	        \log \Big( \frac{\gamma c_m}{\gamma c_m- \zeta}\Big)
	        + 
	        |\log \varphi_{Z_\tau}(u) |
	        \eqfinp
		\end{align*}
	\end{subequations}
By definition, this means (see also Proposition \ref{duval2017nonparametric_lm3})
		\begin{subequations}
		\begin{align*}
			| \log \hat \varphi_{Z_\tau}^{\, J}(u) |
	        &\leq
	        	        \log \Big( \frac{\gamma c_m}{\gamma c_m- \zeta}\Big)
	        + 
	        |\log \varphi_{Y_\tau}(u) + \log \varphi_{\varepsilon}(u)  |
	        \eqfinv
		\end{align*}
	\end{subequations}

However, we know that for all $u \in \RR$
	\begin{equation*}
		\varphi_X(u) = 1 + \frac{1}{\tau} \log \varphi_{Y_\tau} (u)
		\eqfinp
	\end{equation*}
	
It follows that 
	\begin{subequations}
		\begin{align*}
			| \log \hat \varphi_{Z_\tau}^{\, J}(u) |
	        &\leq
	        \log \Big( \frac{\gamma c_m}{\gamma c_m- \zeta}\Big)
	        + 
	        | \tau (\varphi_X(u) + 1) + \log \varphi_{\varepsilon}(u)  |
	        \\
	        & \leq
			\log \Big( \frac{\gamma c_m}{\gamma c_m- \zeta}\Big)
	        + 
	        2 \tau + | \log \varphi_{\varepsilon}(u)  |
	        \eqfinv
	        \\
	        & \leq
			\log \Big( \frac{\gamma c_m}{\gamma c_m- \zeta}\Big)
	        + 
	        2 \tau + \sup_{[-m, m]}{| \log \varphi_{\varepsilon}(\cdot)|}
	        \eqfinp
		\end{align*}
	\end{subequations}

Therefore,
		\begin{align*}
			\bigg| \frac{\log \hat \varphi_{Z_\tau}^{\, J}(u)}{t_2 - t_1} \bigg|
	        &\leq \frac{1}{t_2 - t_1}
			\log \Big( \frac{\gamma c_m}{\gamma c_m- \zeta}\Big)
	        + 
	        \frac{2 \tau}{t_2 - t_1} + \frac{\sup_{[-m, m]}{| \log \varphi_{\varepsilon}(\cdot)|}}{t_2 - t_1}
         \eqfinp
		\end{align*}

There are two different cases. 
\begin{itemize}
\item If $t_2 - t_1 \geq 1$, then $\gamma c_m = \frac{2 \zeta}{1 \wedge (t_2 - t_1)} = 2 \zeta$.
It follows that
\begin{equation*}
\frac{1}{t_2 - t_1}
\log \Big( \frac{\gamma c_m}{\gamma c_m- \zeta}\Big)
= 
\frac{1}{t_2 - t_1} \log(2) < 1
\eqfinp
\end{equation*}
\item If $t_2 - t_1 < 1$, then $\gamma c_m = \frac{2 \zeta}{1 \wedge (t_2 - t_1)} = \frac{2 \zeta}{t_2 - t_1} $. 
It follows that 
	\begin{subequations}
	\begin{align*}
	 \frac{1}{t_2 - t_1}
	 \log \Big( \frac{\gamma c_m}{\gamma c_m- \zeta}\Big)
	 &= 
	 \frac{1}{t_2 - t_1} \log \Bigg( \frac{\frac{2 \zeta}{t_2 - t_1}}{\frac{2 \zeta}{t_2 - t_1} - \frac{\zeta (t_2 - t_1)}{t_2 - t_1}  } \Bigg) \\
	 &= \frac{1}{t_2 - t_1} \log \Bigg( \frac{2}{2 - (t_2 - t_1)} \Bigg)\\
	 &\leq 1
	 \eqfinv
	 \end{align*}
	\end{subequations}
because for any real $x \in (0, 1)$ we have
	\begin{equation*}
	0 < \frac{1}{x}\log\Big(\frac{2}{2-x}\Big) < 1
	\eqfinp
	\end{equation*}
\end{itemize}

It leads to 
\begin{align}
	\bigg| \frac{\log \hat \varphi_{Z_\tau}^{\, J}(u)}{t_2 - t_1} \bigg|
	&\leq 1 + \frac{2 \tau}{t_2 - t_1} + \frac{\sup_{[-m, m]}{| \log \varphi_{\varepsilon}(\cdot)|}}{t_2 - t_1} \notag{}\\
    &\leq 1 + \frac{2 t_2}{t_2 - t_1} + \frac{\sup_{[-m, m]}{| \log \varphi_{\varepsilon}(\cdot)|}}{t_2 - t_1}.
	        \label{eq:tilde_egal_hat}
\end{align}

Since we consider $0 \leq m \leq C_{t_1, t_2}^J$, we have
	\begin{equation*}
	\bigg|\log \hat \varphi_{Z_\tau}^{\, J}(u)\bigg| 
	\leq 3t_2 - t_1 + \sup_{[-m, m]}{| \log \varphi_{\varepsilon}(\cdot)|}
	\leq \ln(J)
    \eqfinp
	\end{equation*}
Therefore, it comes from Equation~\eqref{eq:tilde_egal_hat} and Equation~\eqref{def:estimateur_fctcar_tilde} that 
	\begin{equation*}
	\log \tilde \varphi_{Z_{\tau}}^{\, J}  (u)  = \log \hat \varphi_{Z_{\tau}}^{\,J} (u)
	\eqfinp
	\end{equation*}

By using Lemma~\ref{lm:main2} and the definition of $\gamma$, we have
    \begin{subequations}
    \begin{align*}
        \espe[I_{\tau, J, m}^{(1)}]
		&= \frac{1}{(t_2 - t_1)^2}
		\int\limits_{-m \wedge M_{J, \tau}^{(\gamma c_m)}}^{m \wedge M_{J, \tau}^{(\gamma c_m)}} \E \Big[  \1{\Omega_{\zeta, \tau}(m)}
		|\log \hat \varphi_{Z_{\tau}}^{\, J}  (u) - \log\varphi_{Z_{\tau}} (u)|^2 \Big] \du \\
        &\leq \frac{1}{(t_2 - t_1)^2}  \int_{-m}^{m} 
        \frac{\espe[|\hat \varphi_{Z_\tau}^J(u) - \varphi_{Z_{\tau}}(u)|^2]}{|\varphi_{Z_{\tau}}(u)|^2} \du
        \eqfinv
    \end{align*}
    \end{subequations}

because $\frac{\gamma c_m}{\zeta} \log \big(\frac{\gamma c_m}{\gamma c_m - \zeta} \big) \leq 1$ due to the fact that $\gamma c_m > \zeta$.

Direct computations lead to
    \begin{subequations}
    \begin{align*}
       \espe[|\hat \varphi_{Z_\tau}^J(u) - \varphi_{Z_\tau}(u)|^2]
        &= \frac{1 - |\varphi_{Z_{\tau}}(u)|^2}{J} \\
        &= \frac{1 - |\varphi_{Y_{\tau}}(u)|^2 |\varphi_{\varepsilon}(u)|^2}{J} \\
        &= \frac{(1 - |\varphi_{Y_{\tau}}(u)|^2) |\varphi_{\varepsilon}(u)|^2 + 1 - |\varphi_{\varepsilon}(u)|^2 }{J} \\
        &\leq \frac{(1 - |\varphi_{Y_{\tau}}(u)|^2) |\varphi_{\varepsilon}(u)|^2 + 1 - |\varphi_{\varepsilon}(u)|^2 }{J} \\
        &\leq \frac{2}{J}
        \eqfinp
    \end{align*}
    \end{subequations}

We deduce that
    \begin{equation*}
        \espe[I_{\tau, J, m}^{(1)}]
        \leq 
        \frac{2}{J (t_2 - t_1)^2}  \int_{-m}^{m} 
        \frac{1}{|\varphi_{Z_{\tau}}(u)|^2} \du
        \eqfinp
    \end{equation*}
    
Futhermore, we know that $(Y_\tau)$ is a compound Poisson process with intensity $\lambda = 1$. It leads to 
	\begin{equation*}
	|\varphi_{Y_\tau}(u)| \geq e^{-2\tau} 
	\eqsepv
	u \in \RR
	\eqfinp
	\end{equation*}

Then, 
	\begin{equation*}
	| \varphi_{Z_{\tau}}(u)|^2 = |\varphi_{Y_\tau}(u) \varphi_{\varepsilon}(u)|^2 \geq e^{-4\tau} |\varphi_{\varepsilon}(u)|^2
 \eqfinp
	\end{equation*}

It leads to 
    \begin{equation*}
        \espe[I_{\tau, J, m}^{(1)}]
        \leq 
        \frac{2 e^{4\tau}}{J (t_2 - t_1)^2}  \int_{-m}^{m} 
        \frac{1}{|\varphi_{\varepsilon}(u)|^2} \du
        \eqfinp
    \end{equation*}

\paragraph{\textbf{Step 2} -- Control of $I_{\tau, J, m}^{(3)}$}

    on the event $\Omega_{\zeta, \tau}(m)^c$, it is more complicated to control the empirical distinguished logarithm. 
    Nevertheless, the cut-off on the estimator allows us to write that

\begin{align*}
	 \int\limits_{-m}^m | \log \tilde \varphi_{Z_{\tau}}^{\, J}  (u) - \log\varphi_{Z_{\tau}} (u)|^2 \du 
	 &\leq 
	 2  \int\limits_{-m}^m | \log \tilde \varphi_{Z_{\tau}}^{\, J}  (u)|^2 \du 
	 + 
	 2 \int\limits_{-m}^m | \log\varphi_{Z_{\tau}} (u)|^2 \du
	 \\
	 &\leq 
	 4 m \rj
	 + 
	 2 \int\limits_{-m}^m | \log\varphi_{Y_{\tau}}(u) + \log\varphi_\varepsilon (u)|^2 \du \\
	 &\leq 
	 4 m \rj
	 + 
	 4 \int\limits_{-m}^m | \log\varphi_{Y_{\tau}}(u)|^2 \du 
	 +
	 4 \int\limits_{-m}^m | \log\varphi_\varepsilon (u)|^2 \du\\
	 &\leq 
	 4 m \rj
	 + 
	 32 m \tau^2 \du 
	 +
	 4 \int\limits_{-m}^m | \log\varphi_\varepsilon (u)|^2
	 \eqfinp
\end{align*}

Indeed, we have that $\varphi_X = 1 + \frac{1}{\tau}  \log\varphi_{Y_{\tau}}
	$. It follows that $|\log\varphi_{Y_{\tau}}| \leq 2 \tau$.

Recall that we have 
	\begin{equation*}
		I_{\tau, J, m}^{(3)} 
		= \frac{1}{(t_2 - t_1)^2}
		\1{\Omega_{\zeta, \tau}(m)^c}  \int\limits_{-m}^m | \log \tilde \varphi_{Z_{\tau}}^{\, J}  (u) - \log\varphi_{Z_{\tau}} (u)|^2 \du
	\eqfinv
	\end{equation*}
which leads to write
    \begin{equation*}
        \E[I_{\tau, J, m}^{(3)}] 
        \leq 
     \frac{1}{(t_2 - t_1)^2} \cdot \Big( 
	 4 m \rj
	 + 
	 32 m \tau^2 \du 
	 +
	 4 \int\limits_{-m}^m | \log\varphi_\varepsilon (u)|^2 \du \Big)
	 \cdot 
	 \PP (\Omega_{\zeta, \tau}(m)^c)
	 \eqfinp
	\end{equation*}

We apply Lemma~\ref{lm:main1} with $\eta = 2$ and $\zeta > \sqrt{5 \tau}$. It ensures that

    \begin{equation*}
        \E[I_{\tau, J, m}^{(3)}] 
        \leq 
        \frac{1}{(t_2 - t_1)^2} \cdot \Big( 
	 4 m \rj
	 + 
	 32 m \tau^2
	 +
	 4 \int\limits_{-m}^m | \log\varphi_\varepsilon (u)|^2 \du \Big)
	 \cdot 
      \bigg( \frac{\E[X_i^2]}{J \tau} 
			+ \frac{\E[\noise^2]}{J \tau^2} + 4 \frac{m}{(J \tau)^2} \bigg)
   \eqfinp
	\end{equation*}

\paragraph{\textbf{Step 3} -- Control of $I_{\tau, J, m}^{(2)}$}
By Assumption, $\tau \leq \delta \log(J \tau)$ and $\delta < 1/4$.
It follows that for all $u \in [-m, m], $,
    \begin{equation*}
     |\varphi_{Z_{\tau}}(u)| = |\varphi_{Y_{\tau}} |\cdot |\varphi_\varepsilon| > e^{-2\tau} c_m
	 >
  (J \tau)^{-2\delta} c_m
  >
	 \gamma c_m \sqrt{\log(J\tau)/(J\tau)} \Big\},
 	\end{equation*}
when $J\tau$ is enough high.\\
	
Finally, $M_{J, \tau}^{(\gamma c_m)} \geq m$, and $I_{\tau, J, m}^{(2)} = 0 $ almost surely. 

\subsection{Proof of Theorem \ref{thm:oracle}}

Let $m \geq 0$. The proof of the theorem is divided in two steps: 
in a first one, we control $\EE \norm{ \hat f_{\hat m_n, J} - f}^2 $ on the event $\Ecal = \{ \hat m_J < m \}$, then we control it on the complementary event $\Ecal^c = \{ \hat m_J \geq m \}$. \\

As we know, it follows from the triangle inequality and Parseval's equality that 
    \begin{align*}
        \norm{\bar f_{m, J} - f}^2
        &= \norm{f_m - f}^2 + \frac{1}{2\pi} \int_{-m}^m | \tilde \varphi_X^J (u) - \varphi_X(u)|^2 \du \\
        &= \frac{1}{2\pi}  \int\limits_{|u| \in [-m, m]^c} \hspace{-15pt} |\varphi_X(u)|^2 \du  + \frac{1}{2\pi} \int_{-m}^m | \tilde \varphi_X^J (u) - \varphi_X(u)|^2 \du  
        \eqfinp
    \end{align*}

Therefore, by replacing the estimator $\hat f_{m, J}$ by $\hat f_{\hat m_n, J}$ produces a surplus of bias on $\Ecal$ and and surplus of variance on $\Ecal^c$. Then, we only need to control these surpluses.
\paragraph{\textbf{Step 1} -- Control on $\Ecal$}

\begin{align*}
    \EE \Big[ \1{\Ecal} \int\limits_{|u| \in [\hat m_J, m]}\hspace{-15pt} |\varphi_X &(u)|^2 \du \Big]
    \leq 2 \EE \Big[ \1{\Ecal} \int\limits_{|u| \in [\hat m_J, m]}\hspace{-15pt} |\tilde \varphi_X^J(u)|^2 \du \Big] + 2 \EE \Big[ \1{\Ecal} \int\limits_{|u| \in [\hat m_J, m]}\hspace{-15pt} |\tilde \varphi_X^J(u) - \varphi_X(u)|^2 \du \Big] \\
    &\leq  2 \EE \Big[ \1{\Ecal} \int\limits_{|u| \in [\hat m_J, m]} \frac{\kappa_{J, t_1, t_2}^2 \cdot (1 + u^2)}{J ( t_2 - t_1)^2} \du \Big] +  2 \EE \Big[ \1{\Ecal} \int\limits_{|u| \in [0, m]}\hspace{-15pt} |\tilde \varphi_X^J(u) - \varphi_X(u)|^2 \du \Big] \\
    &\leq 4m \cdot(1 + m^2)\cdot \frac{\kappa_{J, t_1, t_2}^2}{J ( t_2 - t_1)^2} + 2A
    \eqfinv
\end{align*}
where $A = \sum_{i = 1}^2 \frac{4 e^{4t_i}}{J (t_2 - t_1)^2} \int_{-m}^{m} \frac{\du}{|\varphi_{\varepsilon}(u)|^2}
		+\frac{4 K_{J, t_1, t_2}}{(t_2 - t_1)^2} 
	 \cdot \bigg( \frac{\E[X_i^2]}{J t_i} 
			+ \frac{\E[\noise^2]}{J t_i^2} + 4 \frac{m}{(J t_i)^2} \bigg)$
is given by Theorem \ref{thm:analysis_error}. \\
    
Given that $ \kappa_{J, t_1, t_2} = 2 \sqrt{2} e^{2 t_2} + \kappa \sqrt{\ln(J ( t_2 - t_1)^2)} $, we have that 
    \begin{equation*}
     \EE \big[ \norm{\bar f_{\hat m_J} - f }^2 \1{\Ecal} \big] 
     \leq 
     \norm{f_m - f}^2 + C \frac{m \cdot(1 + m^2) \cdot \ln(J ( t_2 - t_1)^2) }{J ( t_2 - t_1)^2} + \tilde C A
     \eqfinv
    \end{equation*}

where $C$ and $\tilde C$ are two constantes.
    
\paragraph{\textbf{Step 2} -- Control on $\Ecal^c$}

Now, we look at the case where where $\hat m_J > m$. By construction, $\hat m_J$ is bounded by $(\mmax)^\alpha$. It follows that 
    \begin{equation*}
        \EE \Big[ \int\limits_{m < |u| < \hat m_J }\hspace{-15pt} |\bar \varphi_X^J(u) - \varphi(u)|^2 \du \1{\Ecal^c} \Big] 
        \leq
        \EE \Big[ \int\limits_{m < |u| < (\mmax)^\alpha }\hspace{-30pt} |\bar \varphi_X^J(u) - \varphi(u)|^2 \du \Big]
        \eqfinp
    \end{equation*}

Let $\eta > 2$, such that $\alpha - \eta < -1$.
As for the proof of the theorem \ref{thm:analysis_error}, Lemma \ref{lm:main1} and Lemma \ref{lm:main2} leads to 
    \begin{align*}
         \EE \big[ |\bar \varphi_X^J(u) - \varphi(u)|^2 \big]
         &
        \leq |\varphi_X(u)|^2 +
        \EE \big[ |\hat \varphi_X^J(u) - \varphi(u)|^2 \big]
         \\
         &
         \leq |\varphi_X(u)|^2 
         +
        \sum_{i = 1}^2 \frac{4 e^{4t_i}}{J (t_2 - t_1)^2} \frac{\du}{|\varphi_{\varepsilon}(u)|^2}
         + 
         \frac{\espe[X_1^2]}{J t_1} + \frac{ \espe[\noise^2]}{J t_1^2} + 4 \frac{m}{(J t_1)^\eta} 
         \eqfinp
    \end{align*}   
On the event $$  \Big|\frac{ \varphi_X(u)}{\sqrt{1 + u^2}}  \Big| \geq \frac{2e^{2 t_2}}{\sqrt{J}(t_2 - t_1)}\eqfinv$$ we see that 
        \begin{align*}
         \EE \big[ |\bar \varphi_X^J(u) - \varphi(u)|^2 \big]
         &\leq|\varphi_X(u)|^2 + \frac{8}{J(t_2 - t_1)^2}\frac{e^{4t_2}}{|\varphi_\varepsilon(u)|^2}
         + 
         \frac{\espe[X_1^2]}{J t_1} + \frac{ \espe[\noise^2]}{J t_1^2} + 4 \frac{m}{(J t_1)^\eta} \\
        &\leq |\varphi_X(u)|^2 +  \frac{2 |\varphi_X(u)|^2}{1 + u^{2}}  \frac{1}{|\varphi_\varepsilon(u)|^2}
         + 
         \frac{\espe[X_1^2]}{J t_1} + \frac{ \espe[\noise^2]}{J t_1^2} + 4 \frac{\mmax}{(J t_1)^\eta} \\
         &
         \leq |\varphi_X(u)|^2 +  \frac{2 |\varphi_X(u)|^2}{(1 + u^{2})} \frac{(1+u^2)}{d}
         + 
         \frac{\espe[X_1^2]}{J t_1} + \frac{ \espe[\noise^2]}{J t_1^2} + 4 \frac{(t_2 - t_1)^{2\alpha}}{J \cdot t_1^\eta} \\
         &\leq C_0 |\varphi_X(u)|^2,
    \end{align*}
as $J \to \infty$. 
From now on, all that remains to be done is to look at the event $$\Acal = \{ \Big|\frac{ \varphi_X(u)}{\sqrt{1 + u^2}}  \Big| \leq \frac{2e^{2 t_2}}{\sqrt{J}(t_2 - t_1)} \}\eqfinp$$
By definition, we know that
    $$ \bar \varphi_X^J(u) \leq 1 + \frac{2 \log(J)}{(t_2 - t_1)}\eqfinp$$
Then, on the event $\Acal$
we see that 
        \begin{align*}
         \EE \big[ & \int_{[m, \hat m_n]} |\bar \varphi_X^J(u)  - \varphi(u)|^2 \cdot \1{\Acal} \big] 
         \\
         & 
         \leq
         \hspace{-0.8cm}
         \int\limits_{|u| \in [m, (\mmax)^\alpha]} \hspace{-0.8cm}
         |\varphi(u)|^2 \du + \Big(2 + \frac{2 \log(J)}{(t_2 - t_1)}\Big)^2 
         \int_{|u| \in [m, (\mmax)^\alpha]} 
         \PP \bigg( \Big|\frac{ \hat \varphi_X(u)}{\sqrt{1 + u^2}}  \Big| \geq \frac{\kappa_{J, t_1, t_2}}{\sqrt{J}(t_2 - t_1)} \bigg)\1{\Acal} \du \\
         & 
         \leq
         \hspace{-0.8cm}
         \int\limits_{|u| \in [m, (\mmax)^\alpha]}
         \hspace{-0.8cm}|\varphi(u)|^2 \du + \Big(2 + \frac{2 \log(J)}{(t_2 - t_1)}\Big)^2 
         \int_{|u| \in [m, (\mmax)^\alpha]} 
         \PP \bigg( \Big|\frac{ \hat \varphi_X(u)}{\sqrt{1 + u^2}}  \Big| \geq \frac{\kappa_{J, t_1, t_2}}{\sqrt{J}(t_2 - t_1)} \bigg) \1{\Acal} \du\\
         &
         \leq
         \hspace{-0.8cm}
         \int\limits_{|u| \in [m, (\mmax)^\alpha]} 
         \hspace{-0.8cm}|\varphi(u)^2| \du + \Big(2 + \frac{2 \log(J)}{(t_2 - t_1)}\Big)^2 
         \hspace{-0.8cm}\int\limits_{|u| \in [m, (\mmax)^\alpha]} \hspace{-0.8cm}
         \PP \bigg( \Big|\frac{ \hat \varphi_X(u) - \varphi(u)}{\sqrt{1 + u^2}}  \Big| \geq \frac{\kappa \sqrt{\ln(J(t_2 - t_1)^2)}}{\sqrt{J}(t_2 - t_1)}\bigg) \du \\ 
         &
         \leq
         \hspace{-0.8cm}
         \int\limits_{|u| \in [m, (\mmax)^\alpha]}
         \hspace{-0.8cm}|\varphi(u)^2| \du + \Big(2 + \frac{2 \log(J)}{(t_2 - t_1)}\Big)^2 \hspace{-0.8cm}
         \int\limits_{|u| \in [m, (\mmax)^\alpha]} \hspace{-0.8cm}
         \PP \bigg( \big| \hat \varphi_X(u) - \varphi(u)  \big| \geq \frac{\kappa \sqrt{\ln(J(t_2 - t_1)^2)}}{\sqrt{J}(t_2 - t_1)}\bigg) \du \\ 
         &
         \leq
         \hspace{-0.8cm}
         \int\limits_{|u| \in [m, (\mmax)^\alpha]} 
         \hspace{-0.8cm}
         |\varphi(u)^2| \du +  T_J \Big(2 + \frac{2 \log(J)}{(t_2 - t_1)}\Big)^2
         \eqfinp
    \end{align*}
    
Finally, it only remains to bound $T_J$ where 
    \begin{equation*}
         T_J = \hspace{-0.8cm} \int\limits_{|u| \in [m, (\mmax)^\alpha]} \hspace{-0.8cm}
         \PP \bigg( \big| \hat \varphi_X(u) - \varphi(u)  \big| \geq \frac{\kappa \sqrt{\ln(J(t_2 - t_1)^2)}}{\sqrt{J}(t_2 - t_1)}\bigg) \du
         \eqfinp
    \end{equation*}
By definition, we have

{\small
    \begin{align*}
    \PP \bigg( &\big| \hat \varphi_X(u) - \varphi(u)  \big|  \geq \frac{\kappa \sqrt{\ln(J(t_2 - t_1)^2)}}{\sqrt{J}(t_2 - t_1)}\bigg) \\
    &= 
    \PP \bigg( \frac{\big|  \log \hat \varphi_{Z_{t_2}}^{\, J}  (u) 
         - \log \hat \varphi_{Z_{t_1}}^{\, J}  (u) 
         - \log\varphi_{Z_{t_2}} (u) + \log \varphi_{Z_{t_1}} (u) \big|}{t_2 - t_1} \geq \frac{\kappa \sqrt{\ln(J(t_2 - t_1)^2)}}{\sqrt{J}(t_2 - t_1)}\bigg) \\
    &\leq
    \PP \bigg( \big| \log \hat \varphi_{Z_{t_1}}^{\, J}  (u) 
        - \log \varphi_{Z_{t_1}} (u) \big| +  \big|  \log \hat \varphi_{Z_{t_2}}^{\, J}  (u)
         - \log\varphi_{Z_{t_2}} (u) \big| \geq (t_2 - t_1) \frac{\kappa \sqrt{\ln(J(t_2 - t_1)^2)}}{\sqrt{J}(t_2 - t_1)}\bigg) \\
    &\leq 
    \PP \bigg( \big| \log \hat \varphi_{Z_{t_1}}^{\, J}  (u) 
        - \log \varphi_{Z_{t_1}} (u) \big| \geq (t_2 - t_1)  \frac{1}{2} \cdot \frac{\kappa \sqrt{\ln(J(t_2 - t_1)^2)}}{\sqrt{J}(t_2 - t_1)}\bigg) \\
        &\qquad \qquad \qquad + 
    \PP \bigg( \big| \log \hat \varphi_{Z_{t_2}}^{\, J}  (u) 
        - \log \varphi_{Z_{t_2}} (u) \big| \geq  (t_2 - t_1)  \frac{1}{2} \cdot \frac{\kappa \sqrt{\ln(J(t_2 - t_1)^2)}}{\sqrt{J}(t_2 - t_1)}\bigg) 
        \eqfinp
    \end{align*}  
}

Without loss of generality, we only consider the term
    \begin{equation*}
        \PP \bigg( \big| \log \hat \varphi_{Z_{t_2}}^{\, J}  (u) 
        - \log \varphi_{Z_{t_2}} (u) \big| \geq (t_2 - t_1)  \frac{\kappa \sqrt{\ln(J(t_2 - t_1)^2)}}{2 \sqrt{J}(t_2 - t_1)}\bigg)
        \eqfinp
    \end{equation*}
    
By taking $\gamma_\Delta$ and $\zeta > 0$, like in the proof of Theorem \ref{thm:analysis_error}, we have that 
    \begin{align*}
        \PP \bigg( \big| &\log \hat \varphi_{Z_{t_2}}^{\, J}  (u) 
        - \log \varphi_{Z_{t_2}} (u) \big| \geq (t_2 - t_1)  \frac{\kappa \sqrt{\ln(J(t_2 - t_1)^2)}}{2 \sqrt{J}(t_2 - t_1)}\bigg) \\
        &\leq
        \PP \bigg( \big| \hat \varphi_{Z_{t_2}}^{\, J}  (u) 
        -  \varphi_{Z_{t_2}} (u) \big| \geq (t_2 - t_1)  | \varphi_{Z_{t_2}} (u)| \frac{\kappa \sqrt{\ln(J(t_2 - t_1)^2)}}{2 \sqrt{J}(t_2 - t_1)}\bigg) + \PP(\Omega_{\zeta, (t_2 - t_1)^2}^c)
        \eqfinp
    \end{align*}

Using hypothesis on the regularity of $|\varphi_{\varepsilon}|$, we have that
    \begin{equation*}
        |\varphi_{Z_{t_2}}| 
        = |\varphi_{Y_{t_2}}| \cdot |\varphi_{\varepsilon}| 
        \geq |\varphi_{Y_{t_2}}| \cdot \frac{d}{\sqrt{1 + u^2}} 
        \geq e^{2 t_2} \cdot \frac{d}{\sqrt{1 + (\mmax)^2}} 
        \eqfinp
    \end{equation*}

Let $c(\theta) = \kappa (t_2 - t_1)  e^{2 t_2} \cdot \frac{d}{\sqrt{1 + (\mmax)^2}}$.

It follows from Hoeffding inequality and Lemma \ref{lm:main1} that 
    \begin{align*}
        \PP \bigg( \big| \log \hat \varphi_{Z_{t_2}}^{\, J}  (u) 
        &- \log \varphi_{Z_{t_2}} (u) \big| \geq \frac{\kappa \sqrt{\ln(J(t_2 - t_1)^2)}}{2 \sqrt{J}(t_2 - t_1)}\bigg) \\
        &\leq
        2(J(t_2 - t_1)^2)^{-c(\theta)^2} + \frac{\espe[X_1^2]}{J \tdiff^2} + \frac{ \espe[\noise^2]}{J \tdiff^4} + 4 \frac{\mmax}{(J \tdiff^2)^\eta}
        \eqfinp
    \end{align*}

If we take $\eta > 3$ such that $2\alpha - \eta < -1$ and $\zeta > \sqrt{\tdiff^2(1 + 2 \eta) }$, then we have
    \begin{equation*}
    \begin{split}
             \int\limits_{|u| \in [m, (\mmax)^\alpha]}\PP \bigg( \big| \log \hat \varphi_{Z_{t_2}}^{\, J}  (u) 
        - \log \varphi_{Z_{t_2}} (u) \big|
        \geq \frac{\kappa \sqrt{\ln(J(t_2 - t_1)^2)}}{2 \sqrt{J}(t_2 - t_1)}\bigg) 
       \\
     \leq 4 (J \tdiff^2)^{\alpha - c(\theta)^2} + \frac{C'}{J \tdiff^2} + \frac{C'}{J \tdiff^4}  
    \end{split}
    \end{equation*}
    
where $C'$ depends on $\EE[X_1^2], \EE[\varepsilon^2]$. A similar reasoning on 
$$              \int\limits_{|u| \in [m, (\mmax)^\alpha]}\PP \bigg( \big| \log \hat \varphi_{Z_{t_1}}^{\, J}  (u) 
        - \log \varphi_{Z_{t_1}} (u) \big|
        \geq \frac{\kappa \sqrt{\ln(J(t_2 - t_1)^2)}}{2 \sqrt{J}(t_2 - t_1)}\bigg), $$
        leads to 
    $$ T_J \leq C_0 (J \tdiff^2)^{\alpha - c(\theta)^2} + \frac{C_1}{J \tdiff^2} + \frac{C_2}{J \tdiff^4}  $$
where $C_0, C_1$ and $C_2$ depends on $\EE[X_1^2], \EE[\varepsilon^2]$.

\subsection{Proof of Theorem \ref{T:risk}}
\label{S:ProofMain} This proof is an adaptation of the proof of Theorem 3.1 of Duval and Kappus \cite{duval2019adaptive} for the settings of the multiplicative decompounding.\\

 We have the decomposition
    \begin{equation*}
    \|\hat{f_m}  - f \|^2 _{\omega_1}
    \leq \|f_m  - f \|^2_{\omega_1} + \|\hat{f_m}  - f_m \|^2_{\omega_1}
    \eqfinp
    \end{equation*}
To prove the theorem, we have to bound the variance term $\|\hat{f_m}  - f_m \|^2_{\omega_1}$. \\
Using the isometry equation, we get 
    \begin{equation*}
    \|\hat{f_m} - f_m \|^2_{\omega_1}
    = 
   \frac{1}{2\pi} \int_{-m}^m \big|  \tilde{\Mcal_1[f]}(s) -  \Mcal_1[f](s) \big|^2 \diff s
   \eqfinp
    \end{equation*}

To get a majorization of the right side term, we consider different events on which it can be control. \\

Fix $\gamma > \zeta$ and set $\gamma_\Delta = \frac{2\zeta}{1 \wedge \Delta}$. We consider the events
\begin{equation*}
    \Omega_{\zeta, \Delta}(m) = \Bigg\{ 
            \forall u \in [-m, m], |\hat{\Mcal_1[\Delta]}(u) - \Mcal_1[\Delta](u) |
            \leq
            \zeta \sqrt{\frac{\log(n \Delta)}{n \Delta}}
        \Bigg\}
\end{equation*}
and
\begin{equation*}
    \{m \leq M_{n, \Delta}^{(\gamma_\Delta)}\} 
\mtext{where}
M_{n, \Delta}^{(\gamma_\Delta)} = \min \Big\{ u \geq 0: |\Mcal_1[\Delta](u) | = \gamma c_m \sqrt{\log(Jt)/(Jt)} \Big\}
        \eqfinp
    \end{equation*}

We have
\begin{equation*}
\begin{split}
     \frac{1}{2\pi} \int_{-m}^m \big|  \tilde{\Mcal_1[f]}(s) &-  \Mcal_1[f](s) \big|^2 \diff s
     = 
    \frac{1}{2\pi} \int_{-m \wedge M_{n, \Delta}^{(\gamma_\Delta)}}^{m \wedge M_{n, \Delta}^{(\gamma_\Delta)}} \ind{\Omega_{\zeta, \Delta}(m)} \big|  \tilde{\Mcal_1[f]}(s) -  \Mcal_1[f](s) \big|^2 \diff s
      \\
      &+
    \frac{1}{2\pi} \ind{m > M_{n, \Delta}^{(\gamma_\Delta)}} \cdot \ind{\Omega_{\zeta, \Delta}(m)} \int_{|s| \in [M_{n, \Delta}^{(\gamma_\Delta)}, m ]} \big|  \tilde{\Mcal_1[f]}(s) -  \Mcal_1[f](s) \big|^2 \diff s
    \\ &+
    \ind{\Omega_{\zeta, \Delta}^c(m)} \frac{1}{2\pi} \int_{-m}^m \big|  \tilde{\Mcal_1[f]}(s) -  \Mcal_1[f](s) \big|^2 \diff s
\end{split}
\end{equation*}

Focus on the event $\Acal = \{ |u| \leq m \wedge M_{n, \Delta}^{(\gamma_\Delta)} \}  \cap \Omega_{\zeta, \Delta}(m)$. First, we prove that $$\hat{\Mcal_1[f]}(s) = \tilde{\Mcal_1[f]}(s).$$

The triangle inequality ensures that
\begin{equation*}
    \hat{\Mcal_1[f]}(s) \leq 1 + \frac{|\hat{\log \Mcal_1[\Delta]}(s) - \log \Mcal_1[\Delta](s)| + |\log \Mcal_1[\Delta](s)|}{\Delta}
    \eqfinp
\end{equation*}

and Lemma~\ref{lm:main2} ensures that 
\begin{equation*}
    \hat{\Mcal_1[f]}(s) \leq 1 + \frac{1}{\Delta} \log \Big( \frac{\gamma}{\gamma - \zeta}\Big)
    \frac{|\hat{\Mcal_1[\Delta]}(s) - \Mcal_1[\Delta](s)|}{|\Mcal_1[\Delta]|}
        + \frac{|\log \Mcal_1[\Delta]|}{\Delta}
    \eqfinp
\end{equation*}
Furthermore, since 
\begin{equation*}
    \Mcal_1[f](s) = 1 + \frac{1}{\Delta} \log \Mcal_1[\Delta](s)
    \eqfinv
\end{equation*}
it follows that
\begin{equation*}
    \hat{\Mcal_1[f]}(s) \leq 3 + \frac{1}{\Delta} \log \Big( \frac{\gamma}{\gamma - \zeta}\Big) \leq 4
    \eqfinp
\end{equation*}
Then $\hat{\Mcal_1[f]}(s) = \tilde{\Mcal_1[f]}(s)$. \\

Therefore,
\begin{align*}
    \EE \Big[\frac{1}{2\pi} \int_{-m \wedge M_{n, \Delta}^{(\gamma_\Delta)}}^{m \wedge M_{n, \Delta}^{(\gamma_\Delta)}} &\ind{\Omega_{\zeta, \Delta}(m)} \big|  \tilde{\Mcal_1[f]}(s) 
     -  \Mcal_1[f](s) \big|^2 \diff s \Big]   \\
    &
    = \frac{1}{2\pi} \int_{-m \wedge M_{n, \Delta}^{(\gamma_\Delta)}}^{m \wedge M_{n, \Delta}^{(\gamma_\Delta)}} \EE \Big[ \ind{\Omega_{\zeta, \Delta}(m)} \big|  \hat{\Mcal_1[f]}(s) -  \Mcal_1[f](s) \big|^2 \Big]\diff s \\
    &
    = \frac{1}{2\pi \Delta^2} \int_{-m \wedge M_{n, \Delta}^{(\gamma_\Delta)}}^{m \wedge M_{n, \Delta}^{(\gamma_\Delta)}} \EE \Big[ \ind{\Omega_{\zeta, \Delta}(m)} \big|  \hat{ \log \Mcal_1[\Delta]}(s) -  \log \Mcal_1[\Delta](s) \big|^2 \Big]\diff s 
    \eqfinp
\end{align*}
Lemma \ref{lm:main2} ensures that 
\begin{equation*}
    \begin{split}
       \EE \Big[\frac{1}{2\pi} \int_{-m \wedge M_{n, \Delta}^{(\gamma_\Delta)}}^{m \wedge M_{n, \Delta}^{(\gamma_\Delta)}}  \ind{\Omega_{\zeta, \Delta}(m)} \big|  \tilde{\Mcal_1[f]}(s) 
     -  \Mcal_1[f](s) \big|^2 \diff s \Big]  \\  
    \leq
    \frac{1}{2\pi \Delta^2} \int_{-m \wedge M_{n, \Delta}^{(\gamma_\Delta)}}^{m \wedge M_{n, \Delta}^{(\gamma_\Delta)}}\frac{  \EE \big[|\hat{\Mcal_1[\Delta]}(s) - \Mcal_1[\Delta](s)|^2 \big]}{|\Mcal_1[\Delta](s)|^2}\diff s
    \eqfinp
     \end{split}
\end{equation*}
We have
\begin{equation*}
     \EE \big[|\hat{\Mcal_1[\Delta]}(s) - \Mcal_1[\Delta](s)|^2 \big]
     = \var(\hat{\Mcal_1[\Delta]}(s)) 
     \leq \frac{1}{n} \EE(|Z_\Delta    ^{c-1 + it}|^2)
     \eqfinp
\end{equation*}
Then we have
\begin{equation*}
    \EE \Big[\frac{1}{2\pi} \int_{-m \wedge M_{n, \Delta}^{(\gamma_\Delta)}}^{m \wedge M_{n, \Delta}^{(\gamma_\Delta)}} \ind{\Omega_{\zeta, \Delta}(m)} \big|  \tilde{\Mcal_1[f]}(s) 
    -  \Mcal_1[f](s) \big|^2 \diff s \Big]    
    \leq 
    \frac{1}{2\pi n \Delta^2} \int_{-m}^{m}\frac{1}{|\Mcal_1[\Delta](s)|^2}\diff s 
    \eqfinp
\end{equation*}

Then, set $\zeta > \sqrt{5 \Delta}$. Using Lemma~\ref{lm:main1} with $\eta = 2$ to get
\begin{equation*}
    \EE \Big[\ind{\Omega_{\zeta, \Delta}^c(m)} \frac{1}{2\pi} \int_{-m}^m \big|  \tilde{\Mcal_1[f]}(s) -  \Mcal_1[f](s) \big|^2 \diff s \Big]
    \leq \frac{50 m}{2\pi} \Big(\frac{\EE[\ln(X_1)^2]}{\Delta n} + 4 \frac{m}{(n \Delta)^{2}} \Big)
    \eqfinp
\end{equation*}

Finally, we observe that $|\Mcal_1(\Delta)(s)|> e^{-2\Delta}$. Following the strategy describes in Duval Kappus \cite{duval2019adaptive}, we obtain that $M_{n, \Delta}^{\gamma_\Delta} = \infty$ and that 
\begin{equation*}
    \EE \Big[\frac{1}{2\pi} \ind{m > M_{n, \Delta}^{(\gamma_\Delta)}} \cdot \ind{\Omega_{\zeta, \Delta}(m)} \hspace{-0.5cm} \int_{|s| \in [M_{n, \Delta}^{(\gamma_\Delta)}, m ]} \limits\hspace{-0.5cm} \big|  \tilde{\Mcal_1[f]}(s) -  \Mcal_1[f](s) \big|^2 \diff s \Big]
    = 0
    \eqfinp
\end{equation*}

\subsection{Proof of Theorem~\ref{T:adapt}}

Let $0 < m < (n\Delta)^\alpha$. As for the \cref{thm:oracle}, the proof is divided in two steps: in the first one, we control $\EE \big[ \|\bar{f_{\hat{m_n}, \Delta}}  - f \|_{\omega_1}^2\big]$ on the event $\Ecal = \{ \hat m_n < m\}$, then we control it on the complementary event $\Ecal^c = \{ \hat m_n \geq m\}$.\\ 

\paragraph{\textbf{Step 1} -- The first event $\Ecal$}
The triangle inequality and the isometry equality leads to 
{\small
\begin{align*}
    \EE \Big[ \1{\Ecal} \hspace{-0.4cm} \int_{|u| \in [\hat{m}_n, m]} \limits\hspace{-0.5cm} |\Mcal_1[f](u)|^2 \diff u \Big]
    &\leq 2 \EE \Big[ \1{\Ecal} \hspace{-0.4cm} \int_{|u| \in [\hat{m}_n, m]} \limits\hspace{-0.5cm} |\tilde\Mcal_1[f](u)|^2 \diff u \Big]  + 2 \EE \Big[ \1{\Ecal} \hspace{-0.4cm} \int_{|u| \in [\hat{m}_n, m]} \limits\hspace{-0.5cm} |\tilde\Mcal_1[f](u) - \Mcal_1[f](u)|^2 \diff u \Big]  \\
    &\leq 2 \EE \Big[ \1{\Ecal} \hspace{-0.4cm} \int_{|u| \in [\hat{m}_n, m]} \limits\hspace{-0.5cm} \frac{  \kappa_{n,\Delta}^2}{n\Delta} \diff u \Big]  + 2 \EE \Big[ \1{\Ecal} \hspace{-0.4cm} \int_{|u| \in [0, m]} \limits\hspace{-0.5cm} |\tilde\Mcal_1[f](u) - \Mcal_1[f](u)|^2 \diff u \Big]  \\
    &\leq 4 \frac{\kappa_{n,\Delta}^2 m}{n\Delta} + \frac{1}{\pi n \Delta^2} \int_{-m}^{m}\frac{1}{|\Mcal_1[\Delta](s)|^2}\diff s 
    + \frac{50}{\pi}m \Big(\frac{\EE[\ln(X_1)^2]}{\Delta n} + 4 \frac{m}{(n \Delta)^{2}} \Big)
    \eqfinp
\end{align*}
}
By definition, $\kappa_{n,\Delta} = e^{2\Delta} + \kappa \sqrt{\log(n\Delta)}$. It follows from \cref{T:risk} that
\begin{multline*}
    \EE \big[ \1{\Ecal}\|\bar{f_{\hat{m_n}, \Delta}}  - f \|_{\omega_1}^2\big] \leq \| f_m - f \|_{\omega_1}^2 + C\frac{\log(n\Delta)m}{n\Delta} \\
    + \frac{1}{\pi n \Delta^2} \int_{-m}^{m}\frac{1}{|\Mcal_1[\Delta](s)|^2}\diff s 
    + \frac{50}{\pi}m \Big(\frac{\EE[\ln(X_1)^2]}{\Delta n} + 4 \frac{m}{(n \Delta)^{2}} \Big)
    \eqfinp
\end{multline*}

\vspace{1em}
\paragraph{\textbf{Step 2} -- The second event $\Ecal^c$} Henceforth, we considere the case where $\hat{m}_n \geq m$. It remains to control the surplus in the variance of $\tilde f_{\hat m_n}$.

Since $m \leq \hat{m}_n \leq (n\Delta)^\alpha$, it follows that
\begin{align*}
    \EE \Big[ \1{\Ecal^c} \hspace{-0.5cm} \int_{|u| \in [m, \hat m_n]} \limits \hspace{-0.5cm} \big|\bar \Mcal_1[f](u) - \Mcal[f](u)\big|^2 \Big]
    \leq
    \Big[ \hspace{-0.5cm} \int_{|u| \in [m, (n\Delta)^\alpha]}\limits \hspace{-0.5cm} \EE  \big[\big|\bar \Mcal_1[f](u) - \Mcal[f](u)\big|^2 \big]
    \eqfinp
\end{align*}

Let $\eta > 2$ such that $\alpha-\eta < -1$. \cref{lm:Mellin1} and \cref{lm:Mellin2} ensure that
\begin{align*}
    \EE  \big[\big|\bar \Mcal_1[f](u) - \Mcal[f](u)\big|^2 \big]
    &\leq |\Mcal_1[f](s)|^2 + \EE  \big[\big|\hat \Mcal_1[f](u) - \Mcal[f](u)\big|^2 \big]  \\
    &\leq  |\Mcal_1[f](s)|^2 + \frac{2}{n \Delta^2 \big|\Mcal_1[\Delta](s)\big|^2}
    + \frac{\EE[\ln(X_1)^2]}{\Delta n} + \frac{4}{n \Delta}
    \eqfinp
\end{align*}

First, we assume that $\{|\Mcal_1[f](s)|^2 > \frac{e^{2\Delta}}{\sqrt{n\Delta}}\}$. Given that $\Mcal_1[\Delta](u) > e^{-2\Delta}$, it follows in this case that 
\begin{align*}
    \EE  \big[\big|\bar \Mcal_1[f](u) - \Mcal_1[f](u)\big|^2 \big]
    \leq |\Mcal_1[f](u)|^2 (6 + \EE[\ln(X_1)^2]). 
\end{align*}

Therefore, 
\begin{align*}
    \EE \Big[ \hspace{-0.5cm} \int_{|u| \in [m, \hat m_n]} \limits \hspace{-0.5cm} \big|\bar \Mcal_1[f](u) - \Mcal[f](u)\big|^2 \cdot \1{\{||\Mcal_1[f](s)|^2| > \frac{e^{2\Delta}}{\sqrt{n\Delta}}\}} \cdot \1{\Ecal^c} \Big] \diff u 
    &\leq A \hspace{-0.5cm} \int_{|u| \in [m, \hat m_n]} \limits \hspace{-0.5cm} |\Mcal_1[f](u)|^2 \diff u \\
    &\leq A \hspace{-0.3cm} \int_{[-m, m]^c} \limits \hspace{-0.3cm} |\Mcal_1[f](u)|^2 \diff u
    \eqfinv
\end{align*}
where $A = 6 + \EE[\ln(X_1)^2]$.\\

Secondly, we assume that $\{|\Mcal_1[f](s)|^2 \leq \frac{e^{2\Delta}}{\sqrt{n\Delta}}\}$. By construction, $|\bar \Mcal_1[f](s)| \leq 4$. It follows that 
\begin{align*}
    \EE \Big[ \hspace{-0.5cm} \int_{|u| \in [m, \hat m_n]} \limits \hspace{-0.5cm} \big|
    & \bar \Mcal_1[f](u)- \Mcal[f](u)\big|^2 
    \cdot \1{\{|\Mcal_1[f](s)|^2| \leq \frac{e^{2\Delta}}{\sqrt{n\Delta}}\}} 
    \cdot \1{\Ecal^c} \Big] \diff u \\
    &\leq \hspace{-0.5cm} \int_{|u|\in[m, (n\Delta)^\alpha]} \limits\hspace{-0.5cm} |\Mcal_1[f](u)|^2 \diff u 
        + 25 \hspace{-0.6cm} \int_{|u| \in [m, (n\Delta)^\alpha]} \limits\hspace{-0.5cm} \PP\Big( |\hat \Mcal_1[f]| \geq \frac{\kappa_{n, \Delta}}{\sqrt{n\Delta}} \Big) \cdot \1{\{|\Mcal_1[f](s)|^2| \leq \frac{e^{2\Delta}}{\sqrt{n\Delta}}\}} \diff u \\
    &\leq \hspace{-0.5cm} \int_{|u|\in[m, (n\Delta)^\alpha]} \limits\hspace{-0.5cm} |\Mcal_1[f](u)|^2 \diff u 
    + 25 \hspace{-0.6cm} \int_{|u| \in [m, (n\Delta)^\alpha]} \limits\hspace{-0.5cm} \PP\Big( |\hat \Mcal_1[f] - \Mcal_1[f]| \geq \kappa \sqrt{\log(n\Delta/(n\Delta))} \Big)  \diff u
    \eqfinp
\end{align*}

It remains to bound the last term. For that, we use \cref{lm:Mellin1} and \cref{lm:Mellin2} by taking $\gamma_\Delta$ and $\zeta > 0$, like in the proof of \cref{T:risk}. By definition, we have that 
\begin{align*}
    \PP\Big( |\hat \Mcal_1[f] &- \Mcal_1[f]| \geq \kappa \sqrt{\log(n\Delta/(n\Delta))} \Big)  \\
    &= 
    \PP\Big( |\log \hat \Mcal_1[\Delta] - \log \Mcal_1[\Delta]| \geq \kappa \Delta \sqrt{\log(n\Delta/(n\Delta))} \Big)  \\
    &\leq 
    \PP\Big( | \hat \Mcal_1[\Delta] - \Mcal_1[\Delta]| \geq |\Mcal_1[\Delta]|\kappa \Delta \sqrt{\log(n\Delta/(n\Delta))} \Big)  + \PP(\Omega_{\zeta, \Delta}^c((n\Delta)^\alpha)).
\end{align*}

Let $c(\Delta) = k\Delta e^{-2\Delta}$. The Hoeffding inequality and \cref{lm:Mellin1} ensure that 
\begin{align*}
    \PP\Big( |\hat \Mcal_1[f] &- \Mcal_1[f]| \geq \kappa \sqrt{\log(n\Delta/(n\Delta))} \Big) 
    \leq
    2(n\Delta)^{\alpha - c(\Delta)^2} + \frac{\EE[\ln(X_1)^2]}{n\Delta} + 4(n\Delta)^{\alpha-\eta}.
\end{align*}
If we take $\eta> 3$ such that $2\alpha - \eta < -1$ and $\zeta > \sqrt{\Delta(1+2\eta)}$, then 
\begin{align*}
   \int_{|u| \in [m, (n\Delta)^\alpha]} \limits\hspace{-0.5cm} \PP\Big( |\hat \Mcal_1[f] - \Mcal_1[f]| \geq \kappa \sqrt{\log(n\Delta/(n\Delta))} \Big)  \diff u
    \leq 4(n\Delta)^{\alpha - c(\Delta)^2} + \frac{B}{n\Delta}
\end{align*}
where $B$ depends on $\EE[\ln(X_1)^2]$.\\

It follows that there exist two constant $A,B$ such that
{\begin{multline*}
    \EE \big[ \|\bar{f_{\hat{m_n}, \Delta}}  - f \|_{\omega_1}^2\big] \\
    \leq A \Big(\| f_m - f \|_{\omega_1}^2 + \frac{\log(n\Delta)m}{n\Delta}
    + \frac{1}{n \Delta^2} \int_{-m}^{m} \limits\frac{1}{|\Mcal_1[\Delta](s)|^2}\diff s 
    + 4 \frac{m^2}{(n \Delta)^{2}} \Big)
    \\
    + B\Big((n\Delta)^{\alpha - c(\Delta)^2} + \frac{1}{n\Delta}\Big)
    \eqfinp
\end{multline*}}

We conclude by taking the infimum in $m$.

\section{Conclusion and perspectives}

In this article, we develop an adaptive procedure to estimate the jump density $f$ in a noisy compound Poisson process. In our case, we observe several noisy compound Poisson processes. We have shown that by looking at what happens at two different times, it is possible to reconstruct the density $f$. \\

In practice, the experimental data allow us to observe the process at different times. A discussion could be conducted later to understand if it is possible to combine estimators built with different $t_1$ and $t_2$ to improve our estimator.

In a future work, we plan to apply our statistical method to biological data from the article by Robert et al. \cite{robert2018mutation}. \\


\appendix

\section{The distinguished logarithm}
\label{S:DistingLog}

In the section, we recall the definition and some properties of the distinguished logarithm. The reader who would like to have more information can refer to the articles of Duval and Kappus \cite{duval2017nonparametric, duval2019adaptive} or to the article of Finkelstein et al. \cite{finkelstein1999extinguishing}.

\begin{theorem}
\label{duval2017nonparametric_lm1}
Let $d \in \NN - \{0\}$ and let $\varphi \in \Ccal^0(\RR^d; \CC^*)$ be a continuous application which never vanishes and such that $\varphi(0) = 1$. \\
Then there exists a unique continuous application $\psi\in \Ccal^0(\RR^d; \CC)$ such that
	\begin{enumerate}
	\item $\psi(0) = 0$,  
	\item for all $x \in \RR^d \eqsepv \varphi(x) = e^{\psi(x)}.$
	\end{enumerate}
The application $\psi$ is called the Distinguished Logarithm of $\varphi$ and is denoted by $\log \varphi$.
\end{theorem}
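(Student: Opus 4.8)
The plan is to recognize this as the classical fact that a continuous, nowhere-vanishing map on a simply connected space admits a continuous logarithm, uniquely pinned down by its value at the origin. The key external input is that $\exp\colon\CC\to\CC^*$ is a covering map, with fibres cosets of the discrete subgroup $2\pi i\,\mathbb{Z}$, together with the observation that $\RR^d$, being star-shaped with respect to $0$, is path-connected and simply connected.

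First I would dispose of uniqueness, which is elementary. If $\psi_1,\psi_2\in\Ccal^0(\RR^d;\CC)$ both satisfy the two conditions, then $e^{\psi_1(x)-\psi_2(x)}=\varphi(x)/\varphi(x)=1$ for every $x$, so $\psi_1-\psi_2$ is a continuous map from the connected space $\RR^d$ into the discrete set $2\pi i\,\mathbb{Z}$, hence constant; since its value at $0$ is $\psi_1(0)-\psi_2(0)=0$, we get $\psi_1\equiv\psi_2$.

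For existence I would lift along rays: for each $x\in\RR^d$ the curve $t\mapsto\varphi(tx)$, $t\in[0,1]$, is continuous, nowhere zero, and starts at $\varphi(0)=1$, so by the path-lifting property of $\exp$ it lifts uniquely to a continuous curve $\widetilde{\gamma}_x\colon[0,1]\to\CC$ with $\widetilde{\gamma}_x(0)=0$; set $\psi(x):=\widetilde{\gamma}_x(1)$. Then $e^{\psi(x)}=\varphi(x)$ by construction, and $\psi(0)=0$ because the lift of the constant curve $1$ starting at $0$ is identically $0$. Concretely, $\widetilde{\gamma}_x(1)$ can be realized as a finite sum: by compactness of $\{tx:t\in[0,1]\}$ one picks $N$ so large that every consecutive ratio $\varphi(\tfrac{k+1}{N}x)/\varphi(\tfrac{k}{N}x)$ lies in $\{|z-1|<1\}$, and puts $\psi(x)=\sum_{k=0}^{N-1}\mathrm{Log}\big(\varphi(\tfrac{k+1}{N}x)/\varphi(\tfrac{k}{N}x)\big)$ with $\mathrm{Log}$ the principal branch; a partition-refinement argument (using additivity of $\mathrm{Log}$ on factors with argument of modulus $<\pi/2$) shows this is independent of the admissible $N$, and $e^{\psi(x)}=\varphi(x)/\varphi(0)$ by telescoping.

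The main obstacle is the continuity of $\psi$. I would establish it locally: given $x_0$, the finitely many compactness bounds used above can be made uniform on a neighbourhood $V$ of $x_0$ (the same $N$ is admissible for all $x\in V$ and $\inf_{x\in V}\inf_{t\in[0,1]}|\varphi(tx)|>0$), so on $V$ each summand $x\mapsto\mathrm{Log}\big(\varphi(\tfrac{k+1}{N}x)/\varphi(\tfrac{k}{N}x)\big)$ is a composition of continuous maps and hence continuous. Equivalently, one invokes the homotopy-lifting property of the covering $\exp$ for the homotopy $(x,t)\mapsto\varphi(tx)$, obtaining a continuous lift $\widetilde{\Phi}$ with $\widetilde{\Phi}(\cdot,0)\equiv0$, and sets $\psi(x)=\widetilde{\Phi}(x,1)$; the normalization $\psi(0)=0$ then follows from uniqueness of path lifts applied to the constant path at $1$. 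Both routes are standard and are precisely the arguments recalled in \cite{duval2017nonparametric,finkelstein1999extinguishing}.
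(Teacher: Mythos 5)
Your proof is correct. Note, however, that the paper itself does not prove this theorem: it is recalled in the appendix as a classical fact, with the reader referred to Duval--Kappus and Finkelstein et al.\ (and Sato's book) for details, so there is no internal proof to compare against. Your argument is exactly the standard one those references rely on: uniqueness from the fact that $\psi_1-\psi_2$ is a continuous map of the connected space $\RR^d$ into the discrete set $2\pi i\,\mathbb{Z}$ vanishing at $0$, and existence/continuity from the covering $\exp\colon\CC\to\CC^*$, either by lifting the homotopy $(x,t)\mapsto\varphi(tx)$ with initial lift $0$ and setting $\psi(x)=\widetilde{\Phi}(x,1)$, or by the equivalent local finite-sum construction with the principal logarithm. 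The homotopy-lifting route as you present it is complete and airtight (including the normalization $\psi(0)=0$ via uniqueness of path lifts). The only place where you are slightly loose is the partition-refinement independence claim for the concrete sum $\sum_k\mathrm{Log}\bigl(\varphi(\tfrac{k+1}{N}x)/\varphi(\tfrac{k}{N}x)\bigr)$: additivity of $\mathrm{Log}$ needs the arguments of the factors being combined to sum into $(-\pi,\pi)$, so the refinement step deserves a sentence of justification (or one can simply observe that any such sum is a value of a lift of $t\mapsto\varphi(tx)$ starting at $0$, hence equals $\widetilde{\gamma}_x(1)$ by uniqueness of lifts). Since you give the covering-space argument as the primary route, this does not affect the validity of the proof.
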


\begin{remark} In general, the distinguished logarithm does not reduce to a composition by the principal determination of the logarithm. In his book \cite{sato1999levy}, Sato remarks to his readers that one can have
    $\varphi(z_1) = \varphi(z_2) \mtext{and} \log \varphi(z_1) \neq \log \varphi(z_2)
    \eqfinp $ \\
Indeed, consider the application $\varphi(t) = e^{it}, (t \in \R)$. It verifies all the assumptions of Theorem \ref{duval2017nonparametric_lm1}.
which ensures that
    \begin{equation*}
        \log \varphi(t) = it 
        \eqsepv (t \in \R)
        \eqfinv
    \end{equation*}
It follows that
    \begin{equation*}
        \varphi(0) = \varphi(2) = 1 \mtext{and} \log\varphi(0) = 0 \eqsepv \log\varphi(2) = 2i \pi
        \eqfinp
    \end{equation*}
\end{remark}

\begin{proposition}(Theorem 2. of \cite{finkelstein1999extinguishing}) 
\label{finkelstein1999extinguishing_thm2}\\
Let $d > 0$ be a positive integer and $f, (f_n)_{n \in \NN} \in \Ccal^0(\RR^d; \CC^*)$ be continuous functions which never vanishes and such that $f(0) = f_n(0)= 1$ for all $n \in \NN$. If $(f_n)$ converges uniformly to $f$ on compact subsets of $\RR^d$, then the sequence $(\log(f_n))_{n \in \NN}$ converges uniformly to $\log(f)$ on compact subsets of $\RR^d$. 
\end{proposition}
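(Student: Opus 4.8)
The plan is to reduce the statement to the elementary continuity of the principal branch of the logarithm at $1$, the bridge being the uniqueness clause in \cref{duval2017nonparametric_lm1}, which lets one identify the globally defined distinguished logarithm with a local branch on a connected set.

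First I would fix a compact $K \subseteq \RR^d$ and, since uniform convergence on $K$ follows from uniform convergence on any larger compact, replace $K$ by a closed ball $\overline{B_R}$ centred at the origin, in particular connected and containing $0$. Because $f$ never vanishes, $g_n := f_n/f$ is a well-defined member of $\Ccal^0(\RR^d;\CC^*)$ with $g_n(0) = 1$, so \cref{duval2017nonparametric_lm1} applies to $f$ and to each $g_n$; moreover $g_n \to 1$ uniformly on $K$ (as $f_n \to f$ uniformly there and $1/f$ is bounded on $K$). Since $e^{\log f + \log g_n} = e^{\log f}e^{\log g_n} = f g_n = f_n$ and $(\log f + \log g_n)(0) = 0$, the uniqueness part of \cref{duval2017nonparametric_lm1} gives $\log f_n = \log f + \log g_n$; hence it suffices to show $\log g_n \to 0$ uniformly on $K$.

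Next, for $n$ large I would use $\sup_{x\in K}|g_n(x)-1| < 1/2$, so that the principal branch $\operatorname{Log}$, continuous on $\{w\in\CC : |w-1|<1/2\}$ with $\operatorname{Log}(1)=0$, makes $x\mapsto \operatorname{Log}(g_n(x))$ a continuous logarithm of $g_n$ on $K$ vanishing at $0$. The map $x\mapsto \log g_n(x) - \operatorname{Log}(g_n(x))$ is then continuous from the connected set $K$ into $2\pi i\,\mathbb{Z}$ and vanishes at $0$, hence is identically zero, so $\log g_n = \operatorname{Log}\circ g_n$ on $K$. Finally $|\operatorname{Log}(1+w)| \le \sum_{k\ge 1}|w|^k/k \le 2|w|$ for $|w|\le 1/2$ yields $\sup_{x\in K}|\log g_n(x)| \le 2\sup_{x\in K}|g_n(x)-1| \to 0$, which is the claim.

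The main obstacle is exactly the identification $\log g_n = \operatorname{Log}\circ g_n$ on $K$: the distinguished logarithm is defined on all of $\RR^d$, whereas $\operatorname{Log}\circ g_n$ only exists where $g_n$ stays near $1$; reconciling the two requires both the connectedness of $K$ and the normalisation at the origin that pins down each branch. The remaining ingredients — additivity of the distinguished logarithm and the $2|w|$ estimate — are routine.
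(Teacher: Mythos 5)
Your argument is correct, and it is worth noting that the paper itself offers no proof of this proposition: it is quoted as Theorem 2 of Finkelstein et al.\ \cite{finkelstein1999extinguishing}, so there is no internal argument to compare against. Your route is a clean, self-contained derivation from the material the paper does provide: the quotient trick $g_n = f_n/f$ together with the uniqueness clause of \cref{duval2017nonparametric_lm1} legitimately yields the additivity $\log f_n = \log f + \log g_n$ on all of $\RR^d$ (since $\log f + \log g_n$ is continuous, vanishes at $0$, and exponentiates to $f_n$), and the identification $\log g_n = \operatorname{Log}\circ g_n$ on a closed ball containing $0$ is correctly justified: the difference is a continuous map into the discrete set $2\pi i\,\mathbb{Z}$ on a connected set, vanishing at the origin, hence identically zero once $\sup_K|g_n-1|<1/2$. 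The final bound $|\operatorname{Log}(1+w)|\le 2|w|$ for $|w|\le 1/2$ then gives uniform convergence, with the threshold $n\ge N(K)$ depending only on the compact set, as it should. The only points requiring care — that the enlarged compact be connected and contain $0$, and that $1/|f|$ be bounded on it so that $g_n\to 1$ uniformly — are both handled explicitly, so I see no gap; your proof could serve as the missing justification the paper delegates to the reference.
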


\begin{proposition}(Lemma 3. of \cite{duval2017nonparametric})
\label{duval2017nonparametric_lm3}\\
Let $\varphi$ be a characteristic function without zeroes and assume that $\varphi$ is differentiable. Then, it follows that
	\begin{equation*}
	\log \varphi (u ) = \int_0^u \frac{\varphi'(z)}{\varphi(z)} \dz
	\eqfinp
	\end{equation*}
\end{proposition}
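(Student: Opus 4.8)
The plan is to check that the right-hand side $g(u):=\int_0^u \varphi'(z)/\varphi(z)\,\diff z$ has the two properties that, by Theorem~\ref{duval2017nonparametric_lm1} applied with $d=1$ (legitimate, since a zero-free characteristic function is a continuous map $\RR\to\CC^*$ taking the value $1$ at the origin), characterise the distinguished logarithm of $\varphi$: namely $g(0)=0$ and $e^{g}=\varphi$. The uniqueness assertion of that theorem then forces $g=\log\varphi$, which is the claim.

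First I would observe that $g$ is well defined and of class $\Ccal^1(\RR;\CC)$: as $\varphi$ is continuous and never vanishes, $1/\varphi$ is continuous, and together with continuity of $\varphi'$ (see the remark below) the integrand $\varphi'/\varphi$ is a continuous $\CC$-valued function of a real variable, so $g(0)=0$ and $g'=\varphi'/\varphi$ by the fundamental theorem of calculus. Then I would run the usual integrating-factor argument with $h(u):=\varphi(u)e^{-g(u)}$, which is $\Ccal^1$ as a product of $\Ccal^1$ maps: using $\varphi g'=\varphi'$,
\[
h'(u)=\varphi'(u)e^{-g(u)}-\varphi(u)g'(u)e^{-g(u)}=\bigl(\varphi'(u)-\varphi(u)g'(u)\bigr)e^{-g(u)}=0 ,
\]
so $h$ is constant on $\RR$; since $h(0)=\varphi(0)e^{0}=1$, we conclude $\varphi(u)=e^{g(u)}$ for every $u\in\RR$. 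Thus $g\in\Ccal^0(\RR;\CC)$ with $g(0)=0$ and $e^{g}=\varphi$, and uniqueness in Theorem~\ref{duval2017nonparametric_lm1} gives $g=\log\varphi$.

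The one point that requires care, and that I expect to be the only genuine obstacle, is the local integrability of $\varphi'$ invoked in the first step: the differentiability hypothesis is to be read as giving a continuous derivative, i.e.\ $\varphi\in\Ccal^1$ (a sufficient condition being $\EE|X|<\infty$, under which $\varphi'(t)=\EE[iX e^{itX}]$ is continuous). This is precisely the regularity already used when the estimators of this article write their empirical distinguished logarithms as $\int_0^u \widehat\varphi'/\widehat\varphi$, so nothing new is assumed. With $\varphi'/\varphi$ continuous, everything else is elementary one-variable calculus for $\CC$-valued functions.
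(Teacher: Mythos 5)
Your proof is correct: the integrating-factor argument showing $h(u)=\varphi(u)e^{-g(u)}$ is constant, combined with $g(0)=0$ and the uniqueness part of Theorem~\ref{duval2017nonparametric_lm1}, is exactly the standard way to identify $\int_0^u \varphi'/\varphi$ with the distinguished logarithm, and your remark that ``differentiable'' must be read as giving a (locally integrable, in practice continuous) derivative is the right caveat. The paper itself does not reproduce a proof --- it cites Lemma~3 of \cite{duval2017nonparametric} --- and your argument coincides with the one used in that source, so there is nothing further to add.
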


\begin{corollary}
Let $d \in \NN - \{0\}$ and $f_1, f_2 \in \Ccal^0(\RR^d; \CC^*)$ be two continuous functions which never vanishes and such that $f_1(0) = f_2(0) = 1$. Assume that $f_1, f_2$ are both differentiable and let $h: \RR^d \to \CC^* $ denotes the quotient $h = \frac{f_2}{f_1}$. 
Then, 
	\begin{equation*}
	\forall u \in \RR^d
	\eqsepv
	\log h(u) = \log f_2(u) - \log f_1(u)
	\eqfinp
	\end{equation*}
\end{corollary}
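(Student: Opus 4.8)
The plan is to invoke the uniqueness clause of \cref{duval2017nonparametric_lm1}. First I would verify that $h = f_2/f_1$ is an admissible argument for that theorem: as a quotient of two continuous $\CC^*$-valued functions, $h$ is itself continuous and never vanishes, and $h(0) = f_2(0)/f_1(0) = 1$. Consequently $\log h$ is well defined, and by \cref{duval2017nonparametric_lm1} it is the \emph{unique} continuous map $\psi\colon \RR^d \to \CC$ satisfying $\psi(0) = 0$ and $e^{\psi(u)} = h(u)$ for all $u \in \RR^d$.

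Next I would exhibit an explicit candidate for this $\psi$, namely $g := \log f_2 - \log f_1$. The function $g$ is continuous, being the difference of two continuous functions (each of $\log f_1$, $\log f_2$ is continuous by the very definition of the distinguished logarithm). It satisfies $g(0) = \log f_2(0) - \log f_1(0) = 0$. Finally, the multiplicativity of the exponential yields, for every $u \in \RR^d$, $e^{g(u)} = e^{\log f_2(u)}\,\bigl(e^{\log f_1(u)}\bigr)^{-1} = f_2(u)/f_1(u) = h(u)$.

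Hence $g$ enjoys the two characterising properties of the distinguished logarithm of $h$, and the uniqueness assertion in \cref{duval2017nonparametric_lm1} forces $g = \log h$, which is exactly the claimed identity $\log h(u) = \log f_2(u) - \log f_1(u)$. I do not anticipate any genuine obstacle in this argument; the only point worth a remark is that the differentiability hypothesis on $f_1$ and $f_2$ is not actually used here (it is presumably retained only for compatibility with \cref{duval2017nonparametric_lm3}), so the corollary in fact holds for merely continuous, non-vanishing $f_1, f_2$ with $f_1(0)=f_2(0)=1$.
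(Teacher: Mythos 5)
Your argument is correct, but it takes a genuinely different route from the paper's. The paper proves the corollary through \cref{duval2017nonparametric_lm3}: it writes $\log h(u) = \int_0^u \frac{h'(z)}{h(z)}\,\diff z$, expands $h'/h$ by the quotient rule into the difference of the logarithmic derivatives of $f_2$ and $f_1$, and splits the integral to recover $\log f_2(u) - \log f_1(u)$; the differentiability hypothesis is exactly what makes this integral representation available. You instead invoke only the uniqueness clause of \cref{duval2017nonparametric_lm1}: the candidate $g = \log f_2 - \log f_1$ is continuous (each distinguished logarithm is continuous by definition), satisfies $g(0) = 0$ and $e^{g(u)} = f_2(u)/f_1(u) = h(u)$, and therefore must coincide with $\log h$. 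Your proof is shorter, avoids any computation with integrals, and, as you rightly note, does not use differentiability at all, so it yields the identity for merely continuous non-vanishing $f_1, f_2$ normalized at the origin; it is also immune to the kind of bookkeeping slip that the quotient-rule computation invites (the paper's displayed chain in fact briefly swaps the roles of $f_1'/f_1$ and $f_2'/f_2$ before stating the correct conclusion). What the paper's approach buys in exchange is consistency with how the distinguished logarithm is actually computed elsewhere in the text, namely through the formula $\log \varphi(u) = \int_0^u \varphi'(z)/\varphi(z)\,\diff z$ that underlies the estimators $\log \hat\varphi_{Z_\tau}^{\,J}$, so the differentiability hypothesis is retained there for that reason rather than out of necessity for the corollary itself.
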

\begin{proof}
\begin{align*}
    \log h(u) 
    &= \int_0^u \frac{h'(z)}{h(z)} \dz
    = \int_0^u \frac{\big(\frac{f_2}{f_1})'(z)}{\big(\frac{f_2}{f_1})(z)} \dz
    =
    \int_0^u \frac{\big(\frac{f_2' f_1 - f_2 f_1'}{f_1^2})}{\big(\frac{f_2}{f_1})} \dz
    =    \int_0^u \Big(\frac{f_2' f_1 - f_2 f_1'}{f_1 f_2}\Big) \dz  \\
    &=  \int_0^u \Big(\frac{f_1' }{f_1}\Big) \dz -  \int_0^u \Big(\frac{ f_2'}{f_2}\Big) \dz 
    = \log f_2(u) - \log f_1(u) 
    \eqfinp
\end{align*}
\end{proof}

\section{Useful lemmas}
\label{s:alem}

\subsection{Lemmas for decompounding with unknown noise}
\begin{lemma}\label{lm:main1}
Let $m, \zeta, t \in [0, \infty)$ be positive reals. We consider the event
    \begin{equation*}
        \Omega_{\zeta, t}(m) = \Bigg\{ 
            \forall u \in [-m, m], |\hat \varphi^J_{Z_{t}}(u) - \fctcar{Z_{t}}(u)|
            \leq
            \zeta \sqrt{\frac{\log(J t)}{J t}}
        \Bigg\}
        \eqfinp
    \end{equation*}
    
        If $\E[X_1^2] < \infty$, then
            \begin{equation*}
                \forall \eta > 0 
                \eqsepv
                \forall \zeta > \sqrt{t(1 + 2 \eta)}
                \eqsepv
                \proba(\Omega_{\zeta, t}(m)^c) \leq \frac{\espe[X_1^2]}{J t} + \frac{ \espe[\noise^2]}{J t^2} + 4 \frac{m}{(J t)^\eta}
                \eqfinp
            \end{equation*}
\end{lemma}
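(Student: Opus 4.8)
The plan is to establish a \emph{uniform} concentration bound over $[-m,m]$ for the centred empirical characteristic function $g_J(u):=\hat\varphi^J_{Z_t}(u)-\varphi_{Z_t}(u)$, by the classical ``good event $+$ discretisation $+$ union bound'' scheme, with the twist that the good event and the mesh are tuned to the scale $\delta:=\zeta\sqrt{\log(Jt)/(Jt)}$. First I would record the second--order data of $Z_t$: since $(Y_t)$ is compound Poisson with $\lambda=1$ and jump law $f$, $\mathrm{Var}(Y_t)=t\,\EE[X_1^2]$, and as $\varepsilon_t$ is centred, square--integrable and independent, one gets $\EE[Z_t]=t\,\EE[X_1]$, $\mathrm{Var}(Z_t)=t\,\EE[X_1^2]+\EE[\varepsilon^2]<\infty$ and $\EE|Z_t|\le(\EE[Z_t^2])^{1/2}<\infty$. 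In particular $\mathrm{Var}(Z_t)/(Jt^2)=\EE[X_1^2]/(Jt)+\EE[\varepsilon^2]/(Jt^2)$ is exactly the sum of the first two terms of the announced bound.

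Next, set $A:=t+\EE|Z_t|$ and introduce the good event $\mathcal G:=\{\tfrac1J\sum_{j\le J}|Z_t^j|\le A\}$. On $\mathcal G$ the map $g_J$ is Lipschitz with constant $L:=A+\EE|Z_t|=t+2\,\EE|Z_t|$, because $|g_J'(u)|\le\tfrac1J\sum_j|Z_t^j|+|\varphi_{Z_t}'(u)|\le\tfrac1J\sum_j|Z_t^j|+\EE|Z_t|$. For its complement, Jensen's inequality gives $\mathrm{Var}(|Z_t|)=\EE[Z_t^2]-(\EE|Z_t|)^2\le\EE[Z_t^2]-(\EE Z_t)^2=\mathrm{Var}(Z_t)$, so Chebyshev's inequality yields $\PP(\mathcal G^c)=\PP\big(\tfrac1J\sum_j(|Z_t^j|-\EE|Z_t|)>t\big)\le\mathrm{Var}(|Z_t|)/(Jt^2)\le\EE[X_1^2]/(Jt)+\EE[\varepsilon^2]/(Jt^2)$, precisely the first two terms of the target.

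Then I would discretise: take a regular grid $-m=u_0<\dots<u_K=m$ of mesh $h:=(L\sqrt J)^{-1}$, so that $K+1\lesssim mL\sqrt J$ and, on $\mathcal G$, $\sup_{|u|\le m}|g_J(u)|\le\max_k|g_J(u_k)|+hL=\max_k|g_J(u_k)|+J^{-1/2}$. Viewed as a function of the i.i.d.\ sample $(Z_t^1,\dots,Z_t^J)$, each $|g_J(u_k)|$ has bounded differences $2/J$ (it is a $\tfrac1J$--average of terms of modulus $\le1$), and $\EE|g_J(u_k)|\le(\EE|g_J(u_k)|^2)^{1/2}=\big((1-|\varphi_{Z_t}(u_k)|^2)/J\big)^{1/2}\le J^{-1/2}$ by \eqref{varECF} (with $N=J$); McDiarmid's inequality then gives $\PP\big(|g_J(u_k)|>\delta-J^{-1/2}\big)\le\exp\!\big(-\tfrac J2(\delta-2J^{-1/2})^2\big)$. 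Since $J\delta^2=\zeta^2t^{-1}\log(Jt)$ while $\delta\sqrt J$ is only of order $\sqrt{\log(Jt)}$, this right--hand side is at most $(Jt)^{-\zeta^2/(2t)+\theta}$ for any fixed $\theta>0$ once $Jt$ is large.

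Combining the pieces, $\PP(\Omega_{\zeta,t}(m)^c)\le\PP(\mathcal G^c)+(K+1)(Jt)^{-\zeta^2/(2t)+\theta}\lesssim\PP(\mathcal G^c)+m\,(Jt)^{1/2-\zeta^2/(2t)+\theta}$, using $\sqrt J\asymp(Jt)^{1/2}$. The hypothesis $\zeta^2>t(1+2\eta)$ is equivalent to $\tfrac12-\tfrac{\zeta^2}{2t}+\eta<0$, so choosing $\theta$ smaller than $\tfrac{\zeta^2}{2t}-\tfrac12-\eta$ makes the exponent strictly below $-\eta$; for $Jt$ large the resulting term is then $\le4m/(Jt)^\eta$, and together with the estimate on $\PP(\mathcal G^c)$ this gives the claim. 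The only genuinely delicate point — and the main obstacle — is the bookkeeping in this last step: one must pick the mesh fine enough that the oscillation $hL$ is absorbed, yet coarse enough that the grid has only polynomially many points, and pick the slack $\theta$ so that the exponential decay at the $O(m\sqrt J)$ grid points beats the union bound; this is exactly where $\zeta>\sqrt{t(1+2\eta)}$ is consumed, and it is also why the inequality is informative only for $Jt$ large, consistent with the standing conditions (e.g.\ $t<\tfrac14\log(Jt)$) under which the lemma is applied.
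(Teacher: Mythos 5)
Your proof is correct and takes essentially the same route as the paper's: a Chebyshev bound on the empirical mean of $|Z_t^j|$ (producing the first two terms $\EE[X_1^2]/(Jt)+\EE[\varepsilon^2]/(Jt^2)$), Lipschitz continuity in $u$ of the centred empirical characteristic function on that good event, a grid of mesh of order $J^{-1/2}$ with a union bound and a Hoeffding-type concentration at the grid points (you use McDiarmid on the modulus, a harmless variant), and the hypothesis $\zeta^2>t(1+2\eta)$ consumed exactly to push the exponent below $-\eta$ and obtain the $4m(Jt)^{-\eta}$ term. The caveat you flag, that constants are absorbed and the bound is only meaningful for $Jt$ large, applies equally to the paper's own argument (its choice of mesh $h$ and the step from $\lceil m/h\rceil(Jt)^{-\tau^2/(2t)}$ to $m(Jt)^{-\eta}$ require the same asymptotics), so it is not a gap relative to the original proof.
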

\begin{proof}
Let $c, h, \tau \in [0, \infty)$ be some positive reals. We define the events
    \begin{align*}
        A(c) 
        &= \left\{ \bigg| \frac{1}{J} \sum_{j = 1}^J |Z_t^j| - \E\bc{|Z_t|}  \bigg| \leq c \right\} 
        \eqfinv
        \\
        B_{h, \tau(m)} 
        &= \left\{ \forall |k| \leq \bigg\lceil \frac{m}{h} \bigg\rceil, |\hat \varphi^J_{Z_{t}}(k h) - \varphi_{Z_{t}}(k h) | \leq \tau \sqrt{\frac{\log (Jt)}{J t}}
        \right\}   \eqfinp \end{align*}
As we know that the function $x \mapsto e^{iux}$ is 1-Lipschitz, then for all $u \in \RR$ and for all $h \in [0, \infty)$ we have
    \begin{subequations}
    \begin{align*}
        | \hat\varphi^J_{Z_t}(u) - \hat \varphi^J_{Z_t}(u + h)|
        & = \Big| \frac{1}{J}\sum_{j = 1}^J e^{i u Z_t^j} - \frac{1}{J}\sum_{j = 1}^J e^{i (u+h) Z_t^j} \Big| \\
        &\leq  \frac{1}{J}\sum_{j = 1}^J \Big| e^{i u Z_t^j} - e^{i (u+h) Z_t^j} \Big| \\
        &\leq  \frac{h}{J}\sum_{j = 1}^J |Z_t^j| \\
        &= h \Big( \frac{1}{J} \sum_{j = 1}^J \Big[ |Z_t^j| - \E[|Z_t|] \Big] +  \E[|Z_t| \Big)
        \eqfinp
    \end{align*}
    \end{subequations}
The definition of $A(c)$ ensures that 
    \begin{equation}
        \forall u \in \RR
        \eqsepv
        h > 0
        \eqsepv
        | \hat\varphi^J_{Z_t}(u) - \hat \varphi^J_{Z_t}(u + h)| \1{A(c)}
        \leq
        h ( c + \E[|Z_t|)
        \eqfinp
    \label{eq:duval_5.1}
    \end{equation}
Moreover we have
    \begin{equation}
        \E[|Z_t|] \leq t \E[|X_1|] + \E[|\noise|]
        \eqfinp
    \end{equation}
    
If $\espe[X_1^2] < \infty$, by appling the Markov inequalities \\
    $$\var[|Z_t|] \leq \var[Z_t] = t \espe[X_1^2] + \espe[\noise^2]\eqfinv$$ 
we claim that
    \begin{equation}
        \proba(A(c)^c) \leq \frac{t \E[X_1^2]}{c^2 J} + \frac{\E[\noise^2]}{c^2 J}
        \eqfinp
        \label{eq:duval_eq5.2}
    \end{equation}


Moreover we have
    \begin{subequations}
    \begin{align}
        \proba(B_{h, \tau(m)}^c) 
        &=  \proba \left( \exists |k| \leq \bigg\lceil \frac{m}{h} \bigg\rceil, |\hat \varphi^J_{Z_{t}}(k h) - \varphi_{Z_{t}}(k h) | > \tau \sqrt{\frac{\log (Jt)}{J t}} \right) \\
        \onlyME{\intertext{It is well-known that the probability of a union of events is smaller than the sum of their probabilities, so we have}}
        &\leq \sum_{k = - \big\lceil \frac{m}{h} \big\rceil}^{\big\lceil \frac{m}{h} \big\rceil} \proba \bigg( |\hat \varphi^J_{Z_{t}}(k h) - \varphi_{Z_{t}}(k h) | > \tau \sqrt{\frac{\log (Jt)}{J t}} \bigg)  \\
        &\leq 
            \sum_{k = - \big\lceil \frac{m}{h} \big\rceil}^{\big\lceil \frac{m}{h} \big\rceil} \proba \Bigg( \Big| \sum_{j = 1}^J  e^{ikhZ_t^j} - \E[e^{ikhZ_t^j}] \Big| > \tau \sqrt{\frac{J\log (Jt)}{ t}} \Bigg) 
            \eqfinp
        \\
        \intertext{As $| e^{ikhZ_t^j}| \leq 1$ almost surely,Hoeffding's inequality ensures that }
        &\leq \sum_{k = - \big\lceil \frac{m}{h} \big\rceil}^{\big\lceil \frac{m}{h} \big\rceil} 2 \exp \Big( - \frac{\tau^2\log(Jt)}{2 t}\Big) \\
        &= 4 \Big\lceil \frac{m}{h} \Big\rceil (Jt)^{-\tau^2 / (2t)}
\eqfinp
    \end{align}
    \end{subequations}

Let $|u| \leq m$. Then there exists $k$ a positive integer such that $u \in [kh - \frac{h}{2}, kh + \frac{h}{2}]$. Then
    \begin{equation}
    \begin{split}
        \1{A(c) \cap B_{h, \tau}(m)} | \hat \varphi^J_{Z_{t}}(u) - \fctcar{Z_{t}}(u)| 
        &\leq \1{A(c) \cap B_{h, \tau}(m)} \Big( | \hat \varphi^J_{Z_{t}}(u) -  \hat \varphi^J_{Z_{t}}(kh) |  \\
        &\qquad +  | \hat \varphi^J_{Z_{t}}(kh) -  \varphi_{Z_{t}}(kh) | + | \varphi_{Z_{t}}(kh) -  \varphi_{Z_{t}}(u) | \Big)
\eqfinp
    \end{split}
    \label{eq:hq}
    \end{equation}
    
Since the function $x \mapsto e^{ixt}$ is 1-Lipschitz, we have
    \begin{subequations}
    \begin{align*}
        | \varphi_{Z_{t}}(kh) -  \varphi_{Z_{t}}(u) |
         &= |\E[e^{i kh Z_t} - e^{i u Z_t} ] \\
         &\leq h\E[|Z_t|]
         \eqfinp
    \end{align*}
    \end{subequations}

By applying this last result to Equation~\eqref{eq:hq}
\begin{equation*}
    \begin{split}
        \1{A(c) \cap B_{h, \tau}(m)} \sup_{|u| \leq m}| \hat \varphi^J_{Z_{t}}(u) - \fctcar{Z_{t}}(u)| 
        &\leq \1{A(c) \cap B_{h, \tau}(m)} \Big( | \hat \varphi^J_{Z_{t}}(u) -  \hat \varphi^J_{Z_{t}}(kh) |  \\
        &\qquad +  | \hat \varphi^J_{Z_{t}}(kh) -  \varphi_{Z_{t}}(kh) | + h\E[|Z_t|] \Big)
        \eqfinp
    \end{split} 
\end{equation*}
It follows from Equation~\eqref{eq:duval_5.1} that
\begin{equation*}
    \begin{split}
        \1{A(c) \cap B_{h, \tau}(m)} \sup_{|u| \leq m}| \hat \varphi^J_{Z_{t}}(u) - &\fctcar{Z_{t}}(u)|
        \leq \\
        & h(c + \E[|Z_t|]) + \1{A(c) \cap B_{h, \tau}(m)}\Big( | \hat \varphi^J_{Z_{t}}(kh) -  \varphi_{Z_{t}}(kh) | \Big) + h\E[|Z_t|]
        \eqfinp    
    \end{split}
\end{equation*}

By using the definition of  $B_{h, \tau}(m)$, we have 
\begin{align}
        \1{A(c) \cap B_{h, \tau}(m)} \sup_{|u| \leq m}| \hat \varphi^J_{Z_{t}}(u) - \fctcar{Z_{t}}(u)| 
        &\leq h(c + \E[|Z_t|]) + \tau \sqrt{\frac{\log(Jt)}{Jt}} + h\E[|Z_t|] \notag{}\\
        &\leq hc + 2h\E[|Z_t|] + \tau \sqrt{\frac{\log(Jt)}{Jt}} \notag{}\\
        &\leq hc + 2h(t\E[|X_1| + \E[|\varepsilon|]]) + \tau \sqrt{\frac{\log(Jt)}{Jt}}        
    \eqfinp    
    \label{eq:duval_5.4}
\end{align}

In particular, if $c = t$,  $h = o(\sqrt{\frac{\log(Jt)}{Jt}})$ such that $h > \frac{1}{\sqrt{Jt}}$ and $\zeta > \tau$, Equation~\eqref{eq:duval_5.4}, allows us to say that 
    \begin{equation*}
        A(c) \cap B_{h, \tau(m)} \subset \Omega_{\zeta, t}(m)
        \eqfinp
    \end{equation*}
In addition, $h > \frac{1}{\sqrt{J t}}$, \eqref{eq:duval_5.1} et \eqref{eq:duval_eq5.2}), we prove that for all 
$\eta > 0$, 
    \begin{subequations}
    \begin{align*}
        \proba(\Omega_{\zeta, t}^c(m)) 
        &\leq 
            \proba(A^c(t)) + \proba(B_{h, \tau}^c)(m) \\
        &\leq \frac{\E[X_1^2]}{J t} + \frac{\E[\noise^2]}{J t^2}  + 4 \Big\lceil \frac{m}{h} \Big\rceil (J t)^{- \frac{\tau^2}{2 t}} \\
        &\leq \frac{\E[X_1^2]}{J t} + \frac{\E[\noise^2]}{J t^2}  + 4 m (J t)^{- \frac{t - \tau^2}{2 t}}
        \eqfinp
    \end{align*}
    \end{subequations}
We obtain the result by taking $\tau^2 = t(1 + 2\eta)$.

\end{proof}

\subsection{Lemmas for multiplicative decompounding}

\begin{lemma}(Lemma 5.2. of \cite{duval2019adaptive}) \label{lm:main2}
Let $\gamma \in [0, \infty)$, $t \in [0, \infty)$ and consider
    \begin{equation*}
        M_{J, t}^{(\gamma)} = \min \Big\{ u \geq 0: |\varphi_{Z_t}(u)| = \gamma \sqrt{\log(Jt)/(Jt)} \Big\}
        \eqfinv
    \end{equation*}
    
with the convention $\inf\{\emptyset\} = \infty$.
    
Let $\zeta \in [0, \infty)$ be a positive real \st $0 < \zeta < \gamma$. Then 
    \begin{equation*}
        \1{|u| \leq M_{J,t}^\gamma \wedge m, \Omega_{\zeta, t}(m)}
        \cdot
        \Big|
        \log (\hat \varphi_{Z_t}^J(u)) - \log \varphi_{Z_t}(u)
        \Big|
        \leq
        \frac{\gamma}{\zeta} \log \Big( \frac{\gamma}{\gamma - \zeta}\Big)
        \frac{|\hat \varphi_{Z_t}^J(u) - \varphi_{Z_t}(u)|}{|\varphi_{Z_t}(u)|}
        \eqfinp
    \end{equation*}
\end{lemma}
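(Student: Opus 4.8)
The plan is to adapt the argument of Duval and Kappus \cite{duval2019adaptive}. Throughout, I work on the event $E := \{|u| \le M_{J,t}^{(\gamma)} \wedge m\} \cap \Omega_{\zeta,t}(m)$ and assume, as in the applications, that $Jt$ is large enough that $\gamma\sqrt{\log(Jt)/(Jt)} < 1$. First I would make sure the relevant distinguished logarithms are defined and that their difference is a genuine logarithm. The function $\varphi_{Z_t} = \varphi_{Y_t}\varphi_\varepsilon$ never vanishes, by \ref{hyp_noise_no_vanish} together with $|\varphi_{Y_t}| \ge e^{-2t}$, and it is differentiable since $\EE[|Z_t|] < \infty$ by \ref{hyp:target_is_square_integrable}--\ref{hyp:noise_is_centered_and_square_integrable}; hence Proposition~\ref{duval2017nonparametric_lm3} gives $\log\varphi_{Z_t}(u) = \int_0^u \varphi_{Z_t}'(z)/\varphi_{Z_t}(z)\,\diff z$, while $\log\hat\varphi_{Z_t}^J(u) = \int_0^u (\hat\varphi_{Z_t}^J)'(z)/\hat\varphi_{Z_t}^J(z)\,\diff z$ by construction. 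Since $|\varphi_{Z_t}(0)| = 1$ and $M_{J,t}^{(\gamma)}$ is the first point at which $|\varphi_{Z_t}|$ reaches the level $\gamma\sqrt{\log(Jt)/(Jt)}$, continuity and the intermediate value theorem give $|\varphi_{Z_t}(z)| \ge \gamma\sqrt{\log(Jt)/(Jt)}$ for all $z$ with $|z| \le M_{J,t}^{(\gamma)}$; combined with $|\hat\varphi_{Z_t}^J(z) - \varphi_{Z_t}(z)| \le \zeta\sqrt{\log(Jt)/(Jt)}$, which holds on $\Omega_{\zeta,t}(m)$ for $|z| \le m$, this yields $|\hat\varphi_{Z_t}^J(z)| \ge (\gamma-\zeta)\sqrt{\log(Jt)/(Jt)} > 0$ for every $z$ between $0$ and $u$ on $E$. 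So the integral defining $\log\hat\varphi_{Z_t}^J(u)$ makes sense there, and, writing $h := \hat\varphi_{Z_t}^J/\varphi_{Z_t}$ and using $h'/h = (\hat\varphi_{Z_t}^J)'/\hat\varphi_{Z_t}^J - \varphi_{Z_t}'/\varphi_{Z_t}$, we obtain $\log\hat\varphi_{Z_t}^J(u) - \log\varphi_{Z_t}(u) = \int_0^u h'(z)/h(z)\,\diff z$.

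Next I would localise $h$ near $1$: for every $z$ between $0$ and $u$,
\[
|h(z) - 1| = \frac{|\hat\varphi_{Z_t}^J(z) - \varphi_{Z_t}(z)|}{|\varphi_{Z_t}(z)|} \le \frac{\zeta}{\gamma} < 1,
\]
so the curve $z \mapsto h(z)$ stays in the open disc $D(1,\zeta/\gamma)$ centred at $1$, which avoids $(-\infty,0]$. On that disc the principal logarithm $\mathrm{Log}$ is holomorphic with $\mathrm{Log}'(w) = 1/w$ and $\mathrm{Log}(1) = 0$, so $z \mapsto \mathrm{Log}(h(z))$ is a continuously differentiable antiderivative of $h'/h$ along the segment and, since $h(0) = 1$,
\[
\log\hat\varphi_{Z_t}^J(u) - \log\varphi_{Z_t}(u) = \int_0^u \frac{h'(z)}{h(z)}\,\diff z = \mathrm{Log}\big(h(u)\big).
\]
Finally, for $|w| \le \rho < 1$ the expansion $\mathrm{Log}(1+w) = \sum_{k \ge 1}(-1)^{k+1}w^k/k$ gives
\[
|\mathrm{Log}(1+w)| \le \sum_{k \ge 1}\frac{|w|^k}{k} \le \frac{|w|}{\rho}\sum_{k\ge 1}\frac{\rho^k}{k} = \frac{|w|}{\rho}\log\frac{1}{1-\rho},
\]
and applying this with $w = h(u) - 1$ and $\rho = \zeta/\gamma$ yields
\[
\big|\log\hat\varphi_{Z_t}^J(u) - \log\varphi_{Z_t}(u)\big| \le \frac{\gamma}{\zeta}\log\Big(\frac{\gamma}{\gamma-\zeta}\Big)\frac{|\hat\varphi_{Z_t}^J(u) - \varphi_{Z_t}(u)|}{|\varphi_{Z_t}(u)|}
\]
on $E$, which is the stated inequality once multiplied by the indicator of $E$.

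The delicate point — essentially the only one — is the identification $\int_0^u h'/h = \mathrm{Log}\circ h$, which requires the a priori bound $|h - 1| \le \zeta/\gamma < 1$ along the whole integration path; this is precisely what the truncation at $M_{J,t}^{(\gamma)}$ and the concentration event $\Omega_{\zeta,t}(m)$ provide, together with the non-vanishing of $\hat\varphi_{Z_t}^J$ on $[0,u]$ needed for its distinguished logarithm to be defined at all. Everything else reduces to the scalar estimate for $\mathrm{Log}(1+w)$ and the definition of $M_{J,t}^{(\gamma)}$.
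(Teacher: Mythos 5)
Your proof is correct, and since the paper gives no proof of this lemma (it is quoted verbatim as Lemma 5.2 of \cite{duval2019adaptive}), the comparison is with the source's argument, which yours reproduces essentially exactly: non-vanishing of $\hat\varphi_{Z_t}^J$ along the integration path on the event, identification of the difference of distinguished logarithms with $\mathrm{Log}\big(\hat\varphi_{Z_t}^J(u)/\varphi_{Z_t}(u)\big)$, and the elementary estimate $|\mathrm{Log}(1+w)|\le \frac{|w|}{\rho}\log\frac{1}{1-\rho}$ with $\rho=\zeta/\gamma$. The only caveat, which you flag yourself, is the implicit requirement $\gamma\sqrt{\log(Jt)/(Jt)}<1$ so that $|\varphi_{Z_t}|$ stays above the threshold on $[-M_{J,t}^{(\gamma)}\wedge m,\,M_{J,t}^{(\gamma)}\wedge m]$; this is harmless because the lemma is only invoked in that asymptotic regime.
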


\begin{lemma}
\label{lm:Mellin1}
Let $m > 0$ and $\zeta > 0$. We consider the event
    \begin{equation*}
        \Omega_{\zeta, \Delta}(m) = \Bigg\{ 
            \forall u \in [-m, m], |\hat{\Mcal_1[\Delta]}(u) - \Mcal_1[\Delta](u) |
            \leq
            \zeta \sqrt{\frac{\log(n \Delta)}{n \Delta}}
        \Bigg\}
        \eqfinp
    \end{equation*}
 If $\EE[\ln(X_1)^2] < \infty$, then
 \begin{equation*}
      \PP(\Omega_{\zeta, t}^c(m)) \leq \frac{\EE[\ln(X_1)^2]}{\Delta n} + 4 \frac{m}{(n \Delta)^{\eta}}
      \eqfinp
 \end{equation*}
\end{lemma}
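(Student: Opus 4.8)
The statement is the multiplicative counterpart of \cref{lm:main1}, and the plan is to transport the proof of that lemma essentially verbatim after one change of variables. The starting observation is that, with $c=1$, the estimator reads $\hat{\Mcal_1[\Delta]}(u)=\tfrac1n\sum_{k=1}^n Z_{k\Delta}^{\,iu}=\tfrac1n\sum_{k=1}^n e^{iu\log Z_{k\Delta}}$, i.e. it is exactly the empirical characteristic function of the i.i.d.\ sample $(\log Z_{k\Delta})_{k=1}^n$, while $\Mcal_1[\Delta](u)=\EE[e^{iu\log Z_\Delta}]$ is the associated true characteristic function. Moreover $\log Z_\Delta=\sum_{i=1}^{N_\Delta}\log X_i$ is a compound Poisson variable, so under $\EE[(\log X_1)^2]<\infty$ one has $\EE[(\log Z_\Delta)^2]<\infty$ (hence $\EE[|\log Z_\Delta|]<\infty$) and $\var(\log Z_\Delta)=\Delta\,\EE[(\log X_1)^2]$. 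This places us exactly in the setting of \cref{lm:main1} under the dictionary $J\leftrightarrow n$, $t\leftrightarrow\Delta$, base variable $Z_t^j\leftrightarrow\log Z_{k\Delta}$, with no additive noise term (which only lightens the bookkeeping).

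Following that proof, I would introduce the auxiliary events
\[
A(c)=\Bigl\{\,\bigl|\tfrac1n\textstyle\sum_{k=1}^n|\log Z_{k\Delta}|-\EE[|\log Z_\Delta|]\bigr|\le c\,\Bigr\},\qquad
B_{h,\tau}(m)=\Bigl\{\forall|k|\le\lceil m/h\rceil,\ |\hat{\Mcal_1[\Delta]}(kh)-\Mcal_1[\Delta](kh)|\le\tau\sqrt{\tfrac{\log(n\Delta)}{n\Delta}}\Bigr\}.
\]
Since $u\mapsto e^{iux}$ is $|x|$-Lipschitz, on $A(c)$ the increments $|\hat{\Mcal_1[\Delta]}(u)-\hat{\Mcal_1[\Delta]}(u+h)|$ are bounded by $h(c+\EE[|\log Z_\Delta|])$, and the same Lipschitz estimate bounds $|\Mcal_1[\Delta](kh)-\Mcal_1[\Delta](u)|$ by $h\,\EE[|\log Z_\Delta|]$; a nearest-grid-point argument then gives, on $A(c)\cap B_{h,\tau}(m)$,
\[
\sup_{|u|\le m}|\hat{\Mcal_1[\Delta]}(u)-\Mcal_1[\Delta](u)|\le hc+2h\,\EE[|\log Z_\Delta|]+\tau\sqrt{\tfrac{\log(n\Delta)}{n\Delta}}.
\]
Choosing $c=\Delta$, any $h$ with $1/\sqrt{n\Delta}<h=o\bigl(\sqrt{\log(n\Delta)/(n\Delta)}\bigr)$, and $\tau<\zeta$, the right-hand side is at most $\zeta\sqrt{\log(n\Delta)/(n\Delta)}$ for $n\Delta$ large, so $A(\Delta)\cap B_{h,\tau}(m)\subseteq\Omega_{\zeta,\Delta}(m)$ and hence $\PP(\Omega_{\zeta,\Delta}(m)^c)\le\PP(A(\Delta)^c)+\PP(B_{h,\tau}(m)^c)$.

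The two remaining probabilities are estimated exactly as in \cref{lm:main1}. For $A(\Delta)$, Chebyshev's inequality together with $\var(|\log Z_\Delta|)\le\var(\log Z_\Delta)=\Delta\,\EE[(\log X_1)^2]$ and $c=\Delta$ yields $\PP(A(\Delta)^c)\le\Delta\,\EE[(\log X_1)^2]/(\Delta^2 n)=\EE[(\log X_1)^2]/(\Delta n)$. For $B_{h,\tau}(m)$, a union bound over the $O(m/h)$ grid points plus Hoeffding's inequality (the summands $e^{ikh\log Z_{k\Delta}}$ have modulus $\le1$) give $\PP(B_{h,\tau}(m)^c)\le4\lceil m/h\rceil(n\Delta)^{-\tau^2/(2\Delta)}$; using $h>1/\sqrt{n\Delta}$ (so $\lceil m/h\rceil\le m\sqrt{n\Delta}$ for $n\Delta$ large) and then taking $\tau^2=\Delta(1+2\eta)$, this becomes $4m(n\Delta)^{1/2-(1+2\eta)/2}=4m(n\Delta)^{-\eta}$; here $\eta>0$ is any value for which such a $\tau<\zeta$ exists, i.e.\ $\zeta>\sqrt{\Delta(1+2\eta)}$, in perfect parallel with \cref{lm:main1}. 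Summing the two bounds gives the claim. The only genuinely new ingredient relative to \cref{lm:main1} is recognising $\log Z_\Delta$ as a compound Poisson variable with $\var(\log Z_\Delta)=\Delta\,\EE[(\log X_1)^2]$ — this is what makes the denominator come out as $\Delta n$ — so I do not expect any real obstacle; everything else is routine and identical to the additive case.
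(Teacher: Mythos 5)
Your proposal is correct and follows essentially the same route as the paper's own proof: the same auxiliary events $A(c)$ and $B_{h,\tau}(m)$, the Lipschitz/grid argument, Chebyshev via $\var(\log Z_\Delta)=\Delta\,\EE[(\log X_1)^2]$ with $c=\Delta$, and a union bound plus Hoeffding with $\tau^2=\Delta(1+2\eta)$ and $\zeta>\tau$. Your explicit remark that the lemma is the empirical characteristic function bound of \cref{lm:main1} applied to $\log Z_{k\Delta}$, together with the condition $\zeta>\sqrt{\Delta(1+2\eta)}$, matches (and slightly cleans up) what the paper does.
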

\begin{proof}
Let $c, h, \tau \in [0, \infty)$ be some positive reals. We define the events
    \begin{align*}
        A(c) 
        &= \left\{ \bigg| \frac{1}{n} \sum_{k = 1}^n |\ln(Z_{k\Delta})| - \EE|\ln(Z_{\Delta})|  \bigg| \leq c \right\} 
        \eqfinv
        \\
        B_{h, \tau(m)} 
        &= \left\{ \forall |k| \leq \bigg\lceil \frac{m}{h} \bigg\rceil, |\hat{\Mcal_1[\Delta]}(k h) - \Mcal_1[\Delta](k h) | \leq \tau \sqrt{\frac{\log(n \Delta)}{n \Delta}}
        \right\}   \eqfinp 
     \end{align*}
    As we know that the function $u \mapsto e^{iux}$ is 1-Lipschitz, then for all $u \in \RR$ and for all $h \in [0, \infty)$ we have
    \begin{subequations}
    \begin{align*}
        \big| \hat{\Mcal_1[\Delta]}(u + h) - \hat{\Mcal_1[\Delta]}(u) \big|
        & = \Big|  \frac{1}{n} \sum_{k = 1}^{n} Z_{k\Delta}^{i(u+h)} -  \frac{1}{n} \sum_{k = 1}^{n} Z_{k\Delta}^{c - 1 + iu}  \Big| \\
        & =  \frac{1}{n} \sum_{k = 1}^{n}  \Big|  Z_{k\Delta}^{i(u+h)} -   Z_{k\Delta}^{iu}  \Big| \\
        & =  \frac{1}{n} \sum_{k = 1}^{n} \Big| e^{i(u+h)\ln(Z_{k\Delta})} -   e^{iu \ln(Z_{k\Delta})}  \Big| \\
        &\leq  \frac{h}{n}\sum_{k = 1}^n \Big| \ln(Z_{k\Delta}) \Big| \\
        &= h \Big( \frac{1}{n} \sum_{k = 1}^n \Big[ |\ln(Z_{k\Delta})| - \EE[|\ln(Z_{\Delta})|] \Big] +  \EE[|\ln(Z_{\Delta})| \Big)
        \eqfinp
    \end{align*}
    \end{subequations}
The definition of $A(c)$ ensures that for all $u \in \RR$ and $h > 0$
    \begin{equation}
        \big| \hat{\Mcal_1[\Delta]}(u + h) - \hat{\Mcal_1[\Delta]}(u) \big| \ind{A(c)}
        \leq
        h \big( c + \EE[|\ln(Z_{\Delta})| \big)
        \leq 
        h \big( c + \Delta \EE[|\ln(X_1)| \big)
        \eqfinp
    \label{eq:duval_5.1mellin}
 \end{equation}
If $\EE[\ln(X_1)^2]$ is finite, the Markov inequality leads to 
    \begin{equation}
        \PP(A(c)^c) \leq \frac{\Delta \EE[\ln(X_1)^2]}{c^2 n}
        \eqfinp
    \label{controlAc}
    \end{equation}

    Moreover we have
    \begin{align*}
        \PP(B_{h, \tau(m)}^c) 
        &=  \PP \left( \exists |k| \leq \bigg\lceil \frac{m}{h} \bigg\rceil, \Big|\hat{\Mcal_1[\Delta]}(k h) - \Mcal_1[\Delta](k h) \Big| > \tau \sqrt{\frac{\log(n \Delta)}{n \Delta}} \right) \\
        &\leq \sum_{k = - \big\lceil \frac{m}{h} \big\rceil}^{\big\lceil \frac{m}{h} \big\rceil} \PP \bigg( \Big|\hat{\Mcal_1[\Delta]}(k h) - \Mcal_1[\Delta](k h) \Big| > \tau \sqrt{\frac{\log(n \Delta)}{n \Delta}}  \bigg)  \\
        &\leq 
            \sum_{k = - \big\lceil \frac{m}{h} \big\rceil}^{\big\lceil \frac{m}{h} \big\rceil} \PP \Bigg(
           \Big|  \sum_{k = 1}^{n} Z_{k\Delta}^{i(kh)} - \EE[Z_{\Delta}^{i(kh)}]  \Big| >  \tau \sqrt{\frac{n \log(n \Delta)}{\Delta}}  \Bigg) 
           \eqfinp
    \end{align*}
    As $| Z_{\Delta}^{i(kh)}| \leq 1$ almost surely, Hoeffding's inequality ensures that 
    \begin{align*}
        \PP(B_{h, \tau(m)}^c)     
        \leq \sum_{k = - \big\lceil \frac{m}{h} \big\rceil}^{\big\lceil \frac{m}{h} \big\rceil} 2 \exp \Big( - \frac{\tau^2\log(n \Delta)}{2 t}\Big) = 4 \Big\lceil \frac{m}{h} \Big\rceil (n \Delta)^{-\tau^2 / (2 \Delta)}
        \eqfinp
    \end{align*}

Let $|u| \leq m$. There exists $k$ a positive integer such that $u \in [kh - \frac{h}{2}, kh + \frac{h}{2}]$. It follows that 
    \begin{equation*}
    \begin{split}
        \ind{A(c) \cap B_{h, \tau}(m)} |\hat{\Mcal_1[\Delta]}(u) &- \Mcal_1[\Delta](u) |
        \leq \ind{A(c) \cap B_{h, \tau}(m)} \Big( \big|\hat{\Mcal_1[\Delta]}(u) - |\hat{\Mcal_1[\Delta]}(kh) \big|  \\
        &\qquad +  \big| \hat{\Mcal_1[\Delta]}(kh) - \Mcal_1[\Delta](kh) \big| + \big| \Mcal_1[\Delta](kh) -  \Mcal_1[\Delta](u) \big| \Big)
        \eqfinp
    \end{split}
    \end{equation*}
We bound the three right terms by using respectively the Equation~\eqref{eq:duval_5.1mellin}, the definition of $B_{h, \tau}$ and the fact that $u \to e^{iux}$ is 1-Lipschitz. It follows that 
    \begin{equation*}
        \ind{A(c) \cap B_{h, \tau}(m)} |\hat{\Mcal_1[\Delta]}(u) - \Mcal_1[\Delta](u) |
        = 
        2 h \Delta \EE[|\ln(X_1)|] + hc +  \tau \sqrt{\frac{\log(n \Delta)}{n \Delta}}
        \eqfinp
    \end{equation*}

By fixing $c = \Delta$,  $h = o \Big( \sqrt{\frac{\log(n\Delta)}{n\Delta}} \Big)$ such that $h > \frac{1}{\sqrt{n\Delta}}$ and $\zeta > \tau$, it follows that
    \begin{equation*}
        A(c) \cap B_{h, \tau(m)} \subset \Omega_{\zeta, \Delta}(m)
        \eqfinp
    \end{equation*}
In addition, $h > \frac{1}{\sqrt{n \Delta}}$, \eqref{eq:duval_5.1}, we prove that for all 
$\eta > 0$, 
    \begin{align*}
        \PP(\Omega_{\zeta, t}^c(m)) 
        &= 
        \PP(A(c)^c) +  \PP(B_{h, \tau(m)}^c)  
        \leq \frac{\EE[\ln(X_1)^2]}{\Delta n} + 4 \Big\lceil \frac{m}{h} \Big\rceil (n \Delta)^{-\tau^2 / (2 \Delta)} \\
        &\leq \frac{\EE[\ln(X_1)^2]}{\Delta n} + 4 m (n \Delta)^{- \frac{t - \tau^2}{2 t}}
        \eqfinp
    \end{align*}
We obtain the result by taking $\tau^2 = t(1 + 2\eta)$.
\end{proof}

\begin{lemma}(Duval, Kappus \cite{duval2019adaptive} Lemma 5.2) \label{lm:Mellin2}
Let $\gamma \in [0, \infty)$ and consider
    \begin{equation*}
        M_{n, \Delta}^{(\gamma_\Delta)} = \min \Big\{ u \geq 0: |\Mcal_1[\Delta](u) | = \gamma c_m \sqrt{\log(Jt)/(Jt)} \Big\}
        \eqfinv
    \end{equation*}
    
with the convention $\inf\{\emptyset\} = \infty$.
    
Let $\zeta \in [0, \infty)$ be a positive real \st $0 < \zeta < \gamma$. Then 
    \begin{equation*}
        \ind{|u| \leq M_{J,t}^\gamma \wedge m, \Omega_{\zeta, t}(m)}
        \cdot
        \big|\hat{\log \Mcal_1[\Delta]}(s) - \log \Mcal_1[\Delta](s) \big|
        \leq
        \frac{\gamma}{\zeta} \log \Big( \frac{\gamma}{\gamma - \zeta}\Big)
        \frac{|\hat{\Mcal_1[\Delta]}(s) - \Mcal_1[\Delta](s)|}{|\Mcal_1[\Delta](s)|}
        \eqfinp
    \end{equation*}
\end{lemma}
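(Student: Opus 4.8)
The plan is to read this as a purely deterministic statement about distinguished logarithms, transporting the argument of Lemma~\ref{lm:main2} (Duval--Kappus, Lemma~5.2) to the Mellin setting. Fix $s$ with $|s| \le M_{n,\Delta}^{(\gamma_\Delta)} \wedge m$ and work on the event $\Omega_{\zeta,\Delta}(m)$. I would first record the two bounds this situation provides \emph{uniformly along the segment} $[-|s|,|s|]$: since $|\Mcal_1[\Delta](0)| = 1$ exceeds the threshold defining $M_{n,\Delta}^{(\gamma_\Delta)}$ once $n\Delta$ is large, and $v \mapsto \Mcal_1[\Delta](v)$ is continuous while $M_{n,\Delta}^{(\gamma_\Delta)}$ is its first crossing time, every $v$ with $|v| \le |s|$ satisfies $|\Mcal_1[\Delta](v)| \ge \gamma \sqrt{\log(n\Delta)/(n\Delta)}$; and on $\Omega_{\zeta,\Delta}(m)$ every such $v$ satisfies $|\hat{\Mcal_1[\Delta]}(v) - \Mcal_1[\Delta](v)| \le \zeta\sqrt{\log(n\Delta)/(n\Delta)}$. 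Hence the ratio $h := \hat{\Mcal_1[\Delta]}/\Mcal_1[\Delta]$ obeys $|h(v) - 1| \le \zeta/\gamma < 1$ for all $|v| \le |s|$; in particular $\hat{\Mcal_1[\Delta]}$ does not vanish there, so $h$ is continuous, nonvanishing on $[-|s|,|s|]$, with $h(0) = 1$.

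The second step is to identify the distinguished logarithm of $h$ with the principal branch along the path. Because $h(v)$ stays in the open disc $B(1,1) \subset \CC^*$, on which the principal logarithm $\mathrm{Log}$ is continuous with $\mathrm{Log}(1) = 0$, the unique continuous lift of $v \mapsto h(v)$ vanishing at $v = 0$ is $v \mapsto \mathrm{Log}(h(v))$; combined with the corollary in Appendix~\ref{S:DistingLog} this gives $\hat{\log \Mcal_1[\Delta]}(s) - \log \Mcal_1[\Delta](s) = \log h(s) = \mathrm{Log}(1 + r)$, where $r = (\hat{\Mcal_1[\Delta]}(s) - \Mcal_1[\Delta](s))/\Mcal_1[\Delta](s)$ and $|r| \le \zeta/\gamma$. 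Then the elementary estimate
\[
|\mathrm{Log}(1+r)| = \Big| \int_0^1 \frac{r}{1+tr}\, \diff t \Big| \le \int_0^1 \frac{|r|}{1 - t|r|}\, \diff t = -\log(1 - |r|),
\]
together with the fact that $x \mapsto -x^{-1}\log(1-x)$ is nondecreasing on $(0,1)$, yields $-\log(1-|r|) \le \frac{\gamma}{\zeta}\log\!\big(\frac{\gamma}{\gamma - \zeta}\big)\,|r|$. Substituting $|r| = |\hat{\Mcal_1[\Delta]}(s) - \Mcal_1[\Delta](s)|/|\Mcal_1[\Delta](s)|$ gives the claimed inequality on the event in question, which is exactly the assertion once one reattaches the indicator.

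The only genuinely delicate point is this identification of the distinguished logarithm with $\mathrm{Log}\circ h$: a priori $\log h$ is built by a global continuous-lifting construction, and the passage to the principal branch is legitimate precisely because $|h(v) - 1| < 1$ holds for \emph{all} $v$ on $[-|s|,|s|]$, not merely at $v = s$. This is exactly why $\Omega_{\zeta,\Delta}(m)$ is formulated with a uniform control over $[-m,m]$ and why $M_{n,\Delta}^{(\gamma_\Delta)}$ is taken as the first crossing time. Everything else is the same real-variable computation as in Lemma~\ref{lm:main2}, and the bookkeeping with the indicator is routine.
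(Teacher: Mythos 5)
Your argument is correct. Note first that the paper itself contains no proof of this statement: the lemma is imported (up to notation) from Duval and Kappus \cite{duval2019adaptive}, Lemma 5.2, so there is no internal proof to compare against. What you wrote is essentially the standard proof of that lemma transported to the Mellin setting: on $\Omega_{\zeta,\Delta}(m)$ and for $|s|\le M_{n,\Delta}^{(\gamma_\Delta)}\wedge m$, the ratio $h=\hat{\Mcal_1[\Delta]}/\Mcal_1[\Delta]$ satisfies $|h-1|\le \zeta/\gamma<1$ along the whole segment joining $0$ to $s$, so the empirical distinguished logarithm, which the paper defines through the integral $\int_0^s \hat{\Mcal'_c[\Delta]}/\hat{\Mcal_1[\Delta]}$, differs from $\log \Mcal_1[\Delta](s)$ exactly by the principal logarithm of $1+r$, where $r$ is the relative error at $s$; and the elementary estimate $|\mathrm{Log}(1+r)|\le -\log(1-|r|)\le \frac{\gamma}{\zeta}\log\big(\frac{\gamma}{\gamma-\zeta}\big)|r|$ (monotonicity of $x\mapsto -x^{-1}\log(1-x)$, equivalently the power-series comparison) is the same one used by Duval and Kappus. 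Two small points deserve to be made explicit. First, the minimum defining $M_{n,\Delta}^{(\gamma_\Delta)}$ runs over $u\ge 0$ only, so for negative arguments you should invoke the symmetry $|\Mcal_1[\Delta](-u)|=|\Mcal_1[\Delta](u)|$, valid because $Z_\Delta>0$ and $\Mcal_1[\Delta](u)=\EE[Z_\Delta^{iu}]$. Second, when the defining set is empty ($M_{n,\Delta}^{(\gamma_\Delta)}=\infty$, which is in fact the generic situation here since $|\Mcal_1[\Delta]|\ge e^{-2\lambda\Delta}$), your "first crossing time" phrasing does not literally apply, but the needed lower bound $|\Mcal_1[\Delta](v)|\ge\gamma\sqrt{\log(n\Delta)/(n\Delta)}$ on the segment still follows from continuity, $|\Mcal_1[\Delta](0)|=1$ and the intermediate value theorem, provided $n\Delta$ is large enough that the threshold lies below $1$ --- the same proviso as your "once $n\Delta$ is large" remark. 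With those details spelled out, your proof is complete.
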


\section{Acknowledgments}
This work is a part of the author's Ph.D thesis under the supervision of Marie Doumic, Marc Hoffmann and Lydia Robert. I would like to thank for their valuable remarks on this article.
The author's research is supported by a Ph.D Inria Grant. 

\bibliographystyle{alpha}
\bibliography{oneforall.bib}

\end{document}